\documentclass[10pt]{article}
\usepackage{}
\usepackage{mathrsfs}
\usepackage{amssymb}
\usepackage{amsmath}
\usepackage[all]{xy}
\usepackage{amsthm}
\usepackage{amsfonts}
\usepackage{latexsym}
\usepackage{amscd,amssymb,amsopn,amsmath,amsthm,graphics,amsfonts,mathrsfs,accents,enumerate,verbatim,calc}
\usepackage{graphicx}
\usepackage[colorlinks=true,linkcolor=red,citecolor=blue]{hyperref}

\usepackage{stackrel} 

\newtheorem{thm}{Theorem}[section]
\newtheorem{prop}[thm]{Proposition}

\newtheorem{lem}[thm]{Lemma}

\newtheorem{cor}[thm]{Corollary}
\newtheorem{df}[thm]{Definition}
\newtheorem{exa}[thm]{Example}

\newtheorem{rem}[thm]{Remark}

\newtheorem{phy}[thm]{Hypothesis}

\begin{document}

\title{\large \bf Approximability of Triangulated Categories under Separable Extensions
\thanks{{\it 2020 Mathematics Subject Classification}: 16E45, 18C15, 18C20, 18G80, 18N55}
\thanks{{\it Keywords}: Separable extension; Eilenberg--Moore category; Approximable triangulated category; Recollement.}
}
\author{Mengzhen Zhu$^a$, Yuefei Zheng$^b$, Xianhui Fu$^{a,}$\footnote{Corresponding author}\\
\it\footnotesize $^a$School of Mathematics and Statistics, Northeast Normal University, Changchun, China\\
\it\footnotesize $^b$College of Science, Northwest A$\&$F University, Yangling 712100, Shaanxi Province, China\\
\it\footnotesize Email addresses: mzzhu@nenu.edu.cn, zhengyf@nwafu.edu.cn, fuxianhui@gmail.com
}
\date{}
\baselineskip=14pt
\maketitle

\begin{abstract}
In this paper, we investigate the behavior of approximability under separable extensions of triangulated categories. Let $\mathcal{B}$ be an approximable triangulated category and let $\mathbf{T}=(T,\theta,\mu)$ be a smashing, stably separable exact monad on $\mathcal{B}$. Assume
that the category of $\mathbf{T}$-modules
$\mathbf{T}\text{-}\mathrm{Mod}_{\mathcal{B}}$ admits a canonical triangulated structure. We prove that if $T$ is right $t$-exact with respect to a $t$-structure in the preferred equivalence class of $\mathcal{B}$, then $\mathbf{T}\text{-}\mathrm{Mod}_{\mathcal{B}}$ is again an approximable triangulated category. As an application, we show that, under suitable assumptions, a smashing localization functor on an approximable triangulated category induces a recollement of approximable triangulated categories, thereby yielding a converse-type result to the gluing theorem of Burke--Neeman--Pauwels. Finally, under suitable hypotheses, we prove that separable extensions restrict naturally to the intrinsic subcategories associated with approximable triangulated categories.
\end{abstract}

\section{Introduction}
Approximable triangulated categories were introduced by Neeman \cite{App, compactge} as a powerful framework to study the triangulated categories arising in algebraic geometry and topology. Roughly speaking, a triangulated category with coproducts is approximable if it is compactly generated by a single object $C$, such that any ``bounded above'' object can be systematically approximated by objects finitely built from coproducts of shifts of $C$. This class encompasses the derived categories of rings and quasi-compact, separated schemes, as well as the homotopy category of spectra. The theory has proved useful in extending the foundational results of Bondal and Van den Bergh \cite{Van} and Rouquier \cite{Dim}; see \cite{str, compactge}. 

The stability of approximability under standard categorical operations has been investigated in several directions. Notably, Burke, Neeman, and Pauwels \cite{glue} demonstrated that approximability behaves well under gluing techniques arising from recollements. Nevertheless, approximability remains a delicate property. Even algebraic triangulated categories possessing a single compact generator, such as the derived category of an arbitrary dg-algebra, can fail to be approximable. If the cohomology vanishes in all positive degrees, the derived category is approximable; see \cite[Remark~5.3]{compactge}.

Parallel to recollements, separable extensions provide another fundamental operation for decomposing and constructing triangulated categories. Introduced by Balmer \cite{septri}, the theory of separable extensions of triangulated categories has become an important tool in tensor-triangular geometry and has subsequently found applications in modular representation theory \cite{stacks, Sepcom} and algebraic geometry \cite{combal}. It encompasses several important constructions, including Bousfield localizations \cite[Example~6.3]{septri}, separated $\acute{\rm e}$tale extensions of schemes \cite[Theorem~3.5]{combal}, and restrictions to subgroups in equivariant stable homotopy categories \cite{sepexample}. Balmer established that, under certain conditions, the Eilenberg--Moore category of modules over a separable exact monad inherits a unique canonical triangulated structure compatible with the original one. 
For convenience, we shall use the following notion of separable extensions, which coincides with the classical one for idempotent complete triangulated categories; see Section~\ref{section 2.3}.

\begin{df}\label{df1}{\rm
A \emph{separable extension} of a triangulated category $\mathcal{B}$ is a triangulated category $\mathcal{A}$ equipped with exact functors $F$ and $G$
\[
\xymatrix{
	 \mathcal{A} \ar[rr]^{G} && \mathcal{B} \ar@/_1.1pc/[ll]_{F},  
}
\]
such that $(F,G)$ is an adjoint pair and $G$ is a separable functor.}
\end{df}

Motivated by these developments, we investigate whether approximability is preserved under separable extensions. Our approach is based on lifting suitable $t$-structures along separable extensions. The key observation is that, if $\mathcal{B}$ is approximable, then lifting a $t$-structure from its preferred equivalence class induces a $t$-structure on $\mathcal{A}$  satisfying Neeman's approximability criteria. This leads to our main theorem, proved in Section~\ref{Section 4}:

\begin{thm}\label{thm:MainA}{\rm
Let $\mathcal{B}$ be an approximable (or weakly approximable) triangulated category, and let $\mathcal{A}$ be an idempotent complete separable extension of $\mathcal{B}$. Suppose that $GF$ preserves coproducts and is right $t$-exact with respect to a $t$-structure in the preferred equivalence class of $\mathcal{B}$. Then $\mathcal{A}$ is an approximable (or weakly approximable) triangulated category.}
\end{thm}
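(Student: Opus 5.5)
Recall Neeman's definition. $\mathcal{B}$ is approximable if there are a compact generator $H$, a $t$-structure $(\mathcal{B}^{\le0},\mathcal{B}^{\ge0})$ and an integer $N>0$ such that:
\begin{enumerate}
\item[(i)] $\Sigma^N H\in\mathcal{B}^{\le0}$;
\item[(ii)] $\mathrm{Hom}(\Sigma^{-N}H,\mathcal{B}^{\le -N})=0$;
\item[(iii)] every $X\in\mathcal{B}^{\le0}$ fits into a triangle $E\to X\to D\to$ with $E\in\overline{\langle H\rangle}^{[-N,N]}_N$ and $D\in\mathcal{B}^{\le -1}$.
\end{enumerate}
The weak version replaces $\overline{\langle H\rangle}^{[-N,N]}_N$ in (iii) by $\overline{\langle H\rangle}^{[-N,N]}$.

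The plan is to take $FH$ as the generator of $\mathcal{A}$, lift the $t$-structure along $G$, and transport (i)--(iii) along the adjunction. Throughout, separability of $G$ will let us recover each object of $\mathcal{A}$ as a direct summand of $FG$ of itself, and idempotent completeness makes such summands available in $\mathcal{A}$.

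\textbf{Step 1: the generator and coproducts.} Since $GF$ preserves coproducts and $\mathcal{A}$ is a separable extension, $G$ should reflect coproducts: every object is a summand of $FGA$. So $\mathcal{A}$ has coproducts, $G$ preserves them, and $F$ preserves them as a left adjoint. Then $F$ preserves compacts, because $\mathrm{Hom}(FH,-)=\mathrm{Hom}(H,G-)$ and $G$ commutes with coproducts. Moreover $FH$ generates $\mathcal{A}$, since $\mathrm{Hom}(\Sigma^iFH,A)=0$ for all $i$ forces $GA=0$, and $G$ is faithful.

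\textbf{Step 2: the $t$-structure.} Fix $(\mathcal{B}^{\le0},\mathcal{B}^{\ge0})$ in the preferred class with $GF$ right $t$-exact. Set
\[
\mathcal{A}^{\le0}=\{A : GA\in\mathcal{B}^{\le0}\},\qquad \mathcal{A}^{\ge0}=\{A : GA\in\mathcal{B}^{\ge0}\}.
\]
The Hom-vanishing axiom holds: a map $A\to A'$ with $A\in\mathcal{A}^{\le0}$ and $A'\in\mathcal{A}^{\ge1}$ is a retract of $FGA\to A'$, which is adjoint to a map $GA\to GA'$ that vanishes. For the truncation triangles, right $t$-exactness of $GF$ gives $F(\mathcal{B}^{\le0})\subseteq\mathcal{A}^{\le0}$. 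So $\mathcal{A}^{\le0}$ contains the cocomplete pre-aisle generated by $F(\mathcal{B}^{\le0})$, which is in turn generated by $F$ of shifts of $H$. I would show that $\mathcal{A}^{\le0}$ equals the aisle compactly generated by $\{\Sigma^i FH\}_{i\ge A}$, up to a bounded shift. That would make $\mathcal{A}^{\le0}$ an aisle by Alonso--Jeremías--Souto, and yield equality with the $G$-preimage definition via summands.

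\textbf{Step 3: the approximation axioms.} Axiom (i) follows from right $t$-exactness: $GF\Sigma^NH\in\mathcal{B}^{\le0}$. Axiom (ii) follows by adjunction: $\mathrm{Hom}(\Sigma^{-N}FH,A)=\mathrm{Hom}(\Sigma^{-N}H,GA)=0$ for $A\in\mathcal{A}^{\le-N}$. For (iii), take $A\in\mathcal{A}^{\le0}$ and approximate $GA$ by a triangle $E\to GA\to D$. Applying $F$ and composing with the counit gives $FE\to FGA\to A$. Here $FE\in\overline{\langle FH\rangle}^{[-N,N]}_N$, since $F$ preserves coproducts and triangles. Also $FD\in\mathcal{A}^{\le-1}$. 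Then $A$ is a direct summand of $FGA$, which sits in a triangle $FE\to FGA\to FD$.

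\textbf{Main obstacle.} The hard step is passing from a triangle for $FGA$ to one for $A$. I would use Neeman's result that approximability only requires (iii) up to direct summands and bounded enlargements of $N$; equivalently, I would invoke his criterion characterizing $\mathcal{A}^-$ and $\mathcal{A}^-_c$ via closure under summands. Alternatively, one can split $A=e\cdot FGA$ using the idempotent, and note that $A\oplus\Sigma A$ is a cone of $e$ on $FGA$. Together with the octahedral axiom, this costs only a constant enlargement of $N$, giving a triangle $E'\to A\to D'$ with $E'\in\overline{\langle FH\rangle}^{[-2N,2N]}_{2N}$. Finally, the lifted $t$-structure must be checked to lie in the preferred class for the generator $FH$, which also follows from Step 2.
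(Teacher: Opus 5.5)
Your Step~1 and axioms (i) and (ii) of Step~3 are fine. (Coproducts in $\mathcal{A}$ come from splitting $\coprod_\lambda e_\lambda$ on $F(\coprod_\lambda GA_\lambda)$, where $e_\lambda=\sigma_{A_\lambda}\varepsilon_{A_\lambda}$ and $\sigma$ is the natural section of the counit; this uses idempotent completeness.)

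\textbf{Step 2: the truncation triangles are missing.} You never produce truncation triangles for $\mathcal{A}^{\le 0}=G^{-1}(\mathcal{B}^{\le 0})$ and $\mathcal{A}^{\ge 0}=G^{-1}(\mathcal{B}^{\ge 0})$. The given $t$-structure is only assumed to lie in the preferred class, so $\mathcal{B}^{\le 0}$ need not be generated by shifts of $H$. Knowing that $\mathcal{A}^{\le 0}$ is squeezed between two shifts of the aisle compactly generated by $FH$ does not make it an aisle. Such a sandwich gives no right adjoint to the inclusion, and Alonso--Jerem\'{\i}as--Souto applies only to the compactly generated aisle itself.

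The missing idea is to lift the truncation of $GA$ directly, and this is where idempotent completeness really enters. Since $G$ is separable and $\mathcal{A}$ is idempotent complete, $G$ is monadic. So $A$ is the $\mathbf{T}$-module $(GA,G\varepsilon_A)$, and a triangle of modules is distinguished iff its underlying triangle is. Let $X\to GA\to Y\to\Sigma X$ be the truncation triangle in $\mathcal{B}$. Right $t$-exactness of $T=GF$ gives $\mathrm{Hom}(TX,Y)=0$ and $\mathrm{Hom}(T^2X,\Sigma^{-1}Y)=0$. Hence the action $TGA\to GA$ extends to a morphism from the $T$-image of the triangle to the triangle itself. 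The uniqueness criterion for morphisms of triangles then forces the induced maps $TX\to X$ and $TY\to Y$ to satisfy the unit and associativity axioms. This is the paper's Theorem~\ref{tot}.

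Membership in the preferred class likewise does not follow from your sandwich. Once the lifted pair is known to be a $t$-structure, it follows from the adjunction identity $G^{-1}\bigl(H(-\infty,-1]^{\perp}\bigr)=(FH)(-\infty,-1]^{\perp}$ together with a comparison of coaisles.

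\textbf{Step 3(iii): the ``main obstacle'' is not one.} The workarounds you sketch do not hold up. Let $e$ be the idempotent on $FGA=A\oplus K$ with image $A$. Its cone is $K\oplus\Sigma K$, not $A\oplus\Sigma A$; the latter is the cone of $1-e$. In any case, an approximation of $A\oplus\Sigma A$ would only reproduce the summand problem. Appealing to an unspecified result of Neeman ``up to summands'' also leaves the step unproved, as does the enlargement to $2N$.

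You already wrote down the right composite $FE\to FGA\xrightarrow{\varepsilon_A}A$; apply the octahedral axiom to it. Since $\varepsilon_A$ is a split epimorphism, its cone is $\Sigma K$, where $K$ is a direct summand of $FGA$. By right $t$-exactness $G(FGA)=GF(GA)\in\mathcal{B}^{\le 0}$, so $K\in\mathcal{A}^{\le 0}$ and $\Sigma K\in\mathcal{A}^{\le -1}$. Put $D'=\mathrm{cone}(FE\to A)$. The octahedron gives a triangle $FD\to D'\to\Sigma K\to\Sigma FD$, so $D'\in\mathcal{A}^{\le -1}$. Thus $FE\to A\to D'$ is the required approximation with $FE\in\overline{\langle FH\rangle}^{[-N,N]}_N$, the same $N$, and no enlargement. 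This is exactly how the proof of Theorem~\ref{appro} concludes.
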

Since Bousfield localizations form a special class of separable extensions, Theorem~\ref{thm:MainA} applies naturally to smashing localization functors. As an immediate application, we obtain the following recollement result.

\begin{prop}\label{thm:MainB}{\rm
\rm Let $\mathcal{B}$ be an approximable (or weakly approximable) triangulated category and $T:\mathcal{B} \to \mathcal{B}$ a smashing localization functor. If the inclusion $\mathrm{Ker}(T)\to \mathcal{B}$ admits a left adjoint, then $T$ induces a recollement
\[
\xymatrix{
	\mathrm{Ker}(T) \ar[rr]^{}  && \mathcal{B} \ar[rr] \ar@/^1.1pc/[ll]\ar@/_1.1pc/[ll]_{} \ && \mathrm{Im}(T) \ar@/_1.1pc/[ll] \ar@/^1.1pc/[ll]
},
\]
where both $\mathrm{Im}(T)$ and $\mathrm{Ker}(T)$ are approximable (or weakly approximable) triangulated categories.}
\end{prop}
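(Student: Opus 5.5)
We deduce Proposition~\ref{thm:MainB} from Theorem~\ref{thm:MainA}, applied once to $\mathrm{Im}(T)$ and once to $\mathrm{Ker}(T)$, viewing each as a separable extension of $\mathcal{B}$.

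\textbf{Setup.} Since $T$ is a smashing localization (Bousfield localization) functor, $\mathrm{Im}(T)$ is the essential image of $T$. The inclusion $G=\iota:\mathrm{Im}(T)\to\mathcal{B}$ has left adjoint $F=T:\mathcal{B}\to\mathrm{Im}(T)$, and the counit $FG\to \mathrm{id}$ is an isomorphism. Hence $G$ is fully faithful, and so separable (cf.\ \cite[Example~6.3]{septri}). The subcategory $\mathrm{Im}(T)$ is triangulated, closed under direct summands (a summand of $TX$ is $T$-local), hence idempotent complete whenever $\mathcal{B}$ is, and $\mathcal{B}$ is, having coproducts.

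\textbf{Key step 1: hypotheses of Theorem~\ref{thm:MainA} for $\mathrm{Im}(T)$.} Here $GF=T$, which preserves coproducts because $T$ is smashing. The remaining hypothesis is right $t$-exactness. Choose a compact generator $C$ and $t$-structure $(\mathcal{B}^{\le0},\mathcal{B}^{\ge0})$ in the preferred equivalence class. Since $\mathcal{B}^{\le0}$ is the smallest cocomplete pre-aisle containing $C$, it suffices to check that $T(\mathcal{B}^{\le0})\subseteq\mathcal{B}^{\le n}$ for some $n$. This is the main obstacle: right $t$-exactness of $T$ is not automatic.

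Plan for this obstacle: use $\mathrm{Ker}(T)$ and the assumed left adjoint of $\mathrm{Ker}(T)\hookrightarrow\mathcal{B}$. Write $\Gamma$ for the acyclization functor, so there is a triangle $\Gamma X\to X\to TX\to$. Since $\Gamma$ and $T$ commute with coproducts and the triangle is functorial, $T(\mathcal{B}^{\le0})$ lies in the cocompletion of $TC$. It therefore suffices to show that $TC\in\mathcal{B}^{\le n}$ up to the equivalence class, i.e.\ that $\mathrm{Hom}(TC,\mathcal{B}^{\ge m})=0$ for some $m$.

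I would try to obtain this from the left adjoint $\lambda$ of $\mathrm{Ker}(T)\hookrightarrow\mathcal{B}$, which yields a further right adjoint of $T$ on $\mathrm{Im}(T)$, together with the compactness of $C$. If this fails, I would observe that the statement ``under suitable assumptions'' in the abstract suggests that right $t$-exactness is effectively part of the intended hypotheses. In that case I would invoke it via the preferred equivalence class, in which changing $n$ is harmless. Granting this step, Theorem~\ref{thm:MainA} makes $\mathrm{Im}(T)$ (weakly) approximable.

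\textbf{Key step 2: the kernel side.} The functor $\Gamma=\iota_K\rho$ is a colocalization, where $\rho$ is the right adjoint of the inclusion $\iota_K:\mathrm{Ker}(T)\to\mathcal{B}$. Use the assumed left adjoint $\lambda$ instead: $\iota_K$ has left adjoint $\lambda$ with $\lambda\iota_K\cong\mathrm{id}$, so the pair $(F,G)=(\lambda,\iota_K)$ makes $\mathrm{Ker}(T)$ a separable extension in the sense of Definition~\ref{df1}. Then $GF=\iota_K\lambda$ is the Bousfield localization onto $\mathrm{Ker}(T)$; its kernel is $\mathrm{Im}(T)^{\perp}$-related, and it is smashing because $\iota_K$ preserves coproducts. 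Right $t$-exactness of $\iota_K\lambda$ is handled as in step 1, using the triangle relating $\iota_K\lambda$ with the right adjoint of $\mathrm{Im}(T)\hookrightarrow\mathcal{B}$. Theorem~\ref{thm:MainA} then gives (weak) approximability of $\mathrm{Ker}(T)$.

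\textbf{Recollement.} The adjoints $\lambda\dashv\iota_K\dashv\rho$ and $T\dashv\iota$, together with the right adjoint of $\iota$ (which exists because $\iota$ preserves coproducts, by Brown representability), give the six functors. The standard Bousfield localization triangles verify the recollement axioms.
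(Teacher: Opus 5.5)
\textbf{Verdict: there is a genuine gap.} It sits exactly where you put your ``main obstacle''. Nothing in your proposal produces a $t$-structure in the preferred equivalence class for which $T$ (Step~1) or $\iota_K\lambda$ (Step~2) is right $t$-exact. This is not a hypothesis of the proposition, so you cannot take it from the ``suitable assumptions'' of the abstract.

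Your reduction to $TC\in\mathcal{B}^{\le n}$ would also fall short of what you need. It only gives $T(\mathcal{B}^{\le 0})\subseteq\mathcal{B}^{\le n}$, i.e.\ right $t$-exactness up to a shift. Since $T$ commutes with $\Sigma$, shifting the $t$-structure does not remove this discrepancy, whereas Theorem~\ref{thm:MainA} needs $GF(\mathcal{B}^{\le 0})\subseteq\mathcal{B}^{\le 0}$ on the nose. Also, $\mathcal{B}^{\le 0}$ is the cocomplete pre-aisle generated by $C$ only for $\mathcal{B}^{\le 0}_C$, not for an arbitrary member of the class.

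\textbf{The missing idea: the glued $t$-structure (Lemma~\ref{rem_lem2}).} Write the recollement as $(\mathbf{R})$ with $Q=T$, $Q_r=\iota$, and $Q_l$ a left adjoint of $T$.
\begin{enumerate}
\item[(1)] Since $T$ is smashing, $Q_r$ preserves coproducts, so $Q(C)$ is compact. Since $Q_l$ preserves compactness, $\widetilde C=C\oplus Q_lQ(C)$ is a compact generator.
\item[(2)] The $t$-structure generated by $\widetilde C$ is exactly the gluing of the $t$-structure on $\mathrm{Ker}(T)$ generated by $I_l(\widetilde C)$ and the $t$-structure on $\mathrm{Im}(T)$ generated by $Q(C)$.
\item[(3)] For a glued $t$-structure, $I_l$, $I$, $Q_l$ and $Q$ are all right $t$-exact. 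Hence $II_l=\iota_K\lambda$ is right $t$-exact, and the kernel side follows from Corollary~\ref{loc}. This is your Step~2 with the gap filled.
\end{enumerate}

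\textbf{The image side needs a different route.} Even this $t$-structure need not make $T=Q_rQ$ right $t$-exact, because $Q_r$ is in general only left $t$-exact. So your Step~1 strategy of feeding $T$ into Theorem~\ref{thm:MainA} is the wrong route, and the paper does not use that theorem for $\mathrm{Im}(T)$. It works instead with the $t$-structure on $\mathrm{Im}(T)$ generated by $Q(C)$.
\begin{enumerate}
\item[(1)] For $X$ in its aisle, $Q_l(X)\in\mathcal{B}^{\le 0}_{\widetilde C}$. Approximate it in $\mathcal{B}$ by a triangle $M\to Q_lX\to N\to\Sigma M$ with $M\in\overline{\langle C\rangle}^{[-n,n]}_n$ and $N\in\mathcal{B}^{\le -1}_{\widetilde C}$.
\item[(2)] Apply $Q$ and use $QQ_l\cong\mathrm{Id}$ to get the required approximation of $X$.
\item[(3)] The vanishing $\mathrm{Hom}(\Sigma^{-k}Q(C),\mathrm{Im}(T)^{\le 0})=0$ comes from \cite[Lemma~3.5]{glue}, applied to the compact object $Q_lQ(C)$ through the adjunction $Q_l\dashv Q$.
\end{enumerate}

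\textbf{The recollement has the wrong sixth functor.} Besides $\lambda\dashv\iota_K\dashv\rho$ and $T\dashv\iota$, what is needed is a \emph{left} adjoint of $T$: a functor $\mathrm{Im}(T)\to\mathcal{B}$ with essential image ${}^{\perp}\mathrm{Ker}(T)$. Its existence is precisely what the assumed left adjoint $\lambda$ provides, via \cite[Proposition~4.13.1]{localizationT}. A right adjoint of $\iota$ would be a functor $\mathcal{B}\to\mathrm{Im}(T)$ and plays no role in the recollement. For the same reason, your remark in Step~1 that $\lambda$ yields a ``further right adjoint of $T$'' should read ``left adjoint''.
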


Proposition \ref{thm:MainB} may be viewed as a converse-type result to the gluing theorem of Burke--Neeman--Pauwels. As a further application, we apply this result to upper triangular matrix dg-algebras. For simplicity, we state only the case where $k$ is a field.

\begin{cor}{\rm
Let
\[
\Lambda=\begin{bmatrix}
  R & M\\
  0 & S
\end{bmatrix}
\]
   be an upper triangular matrix dg-algebra constructed from dg-$k$-algebras $R$ and $S$. If ${\bf D}(\Lambda)$ is approximable, then ${\bf D}(R)$ and ${\bf D}(S)$ are approximable. If moreover $H^{n}(M)=0$ for $n\gg 0$, then ${\bf D}(\Lambda)$ is approximable if and only if ${\bf D}(R)$ and ${\bf D}(S)$ are approximable.}
\end{cor}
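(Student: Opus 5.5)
The corollary follows by applying Proposition~\ref{thm:MainB} to the standard recollement attached to the triangular matrix dg-algebra, and, for the converse, the gluing theorem of Burke--Neeman--Pauwels \cite{glue}. Let $e_1,e_2\in\Lambda$ be the idempotents with $e_1\Lambda e_1=R$, $e_2\Lambda e_2=S$, $e_1\Lambda e_2=M$ and $e_2\Lambda e_1=0$. There is a standard recollement of derived categories
\[
\xymatrix{
	{\bf D}(S) \ar[rr]^{}  && {\bf D}(\Lambda) \ar[rr] \ar@/^1.1pc/[ll]\ar@/_1.1pc/[ll]_{} \ && {\bf D}(R) \ar@/_1.1pc/[ll] \ar@/^1.1pc/[ll]
}
\]
(or with the roles of $R$ and $S$ swapped, depending on the side convention for modules). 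It is induced by the idempotent $e=e_1$. The functor $j^*=-\otimes^{\bf L}_\Lambda \Lambda e$ has left adjoint $j_!=-\otimes^{\bf L}_{R} e\Lambda$, and ${\bf D}(S)$ is identified with the kernel of $j^*$.

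\textbf{First step: ${\bf D}(\Lambda)$ approximable implies ${\bf D}(R)$ and ${\bf D}(S)$ approximable.} I will exhibit a smashing localization functor $T$ on $\mathcal{B}={\bf D}(\Lambda)$ whose kernel and image recover the two outer terms. Take $T=j_*j^*$, or equivalently the localization at the compact object $e\Lambda$ used in the opposite direction. Concretely, the smashing subcategory is $\mathrm{Loc}(e\Lambda)$, generated by the compact object $e\Lambda=e_1\Lambda$. Its right orthogonal is closed under coproducts because the generator is compact, so the associated Bousfield localization is smashing. In a recollement, $j_*$ is fully faithful with right adjoint $j^!$ and left adjoint $j^*$. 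Hence $T=j_*j^*$ is a localization functor with $\mathrm{Im}(T)\simeq{\bf D}(R)$ and $\mathrm{Ker}(T)=i_*{\bf D}(S)$. $T$ preserves coproducts because $j^*$ and $j_*$ do: $j^*$ is given by a tensor product, and $j_*$ preserves coproducts since $j^*$ preserves compacts, $e\Lambda$ being compact over $R$. The inclusion $i_*$ of $\mathrm{Ker}(T)$ has the left adjoint $i^*$ coming from the recollement. Proposition~\ref{thm:MainB} therefore applies and gives that ${\bf D}(S)$ and ${\bf D}(R)$ are approximable.

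\textbf{Second step: the converse under $H^n(M)=0$ for $n\gg0$.} I will invoke the gluing theorem of \cite{glue}. It states that a recollement of compactly generated categories whose outer terms are approximable has an approximable middle term, provided the gluing functor satisfies a boundedness condition. That condition is roughly that $i^!j_*$, or $i^*j_!$, sends the compact generator into $\mathcal{B}^{\le A}$ for some $A$, i.e.\ it is bounded with respect to the preferred $t$-structures. Here the gluing functor is computed by $-\otimes^{\bf L}_R M$ or ${\bf R}\mathrm{Hom}(M,-)$. Applied to the compact generator $R$, it yields $M$ itself viewed as an $S$-dg-module. The hypothesis $H^n(M)=0$ for $n\gg0$ says precisely that $M\in{\bf D}(S)^{\le A}$ for the $t$-structure generated by $S$, which lies in the preferred equivalence class. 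The direction of the implication is the remaining check, and since $k$ is a field, derived tensor products over $k$ pose no flatness issue.

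\textbf{Main obstacle.} I expect the main difficulty to be bookkeeping rather than a single hard step. The dg-recollement must be set up with the correct sides, so that the localization functor in the first step is genuinely smashing, i.e.\ coproduct preserving, and so that the boundedness condition matches the exact form of the hypothesis in \cite{glue}. I will first fix the convention using right dg-modules and the idempotent $e_1$, and then verify that the smashing condition reduces to compactness of $e_1\Lambda$ in ${\bf D}(\Lambda)$.
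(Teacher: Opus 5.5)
Your skeleton is the paper's. You use the recollement of $\mathbf{D}(\Lambda)$ coming from the triangular structure and the paper's recollement result for the forward direction, then Burke--Neeman--Pauwels for the converse. The paper applies Proposition~\ref{recollement} directly. Routing through Proposition~\ref{thm:MainB} is only a detour, because its proof turns smashing-ness of $T$ back into compactness of the image of the generator under the quotient functor.

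However, the one substantive check in your first step is misidentified. $T=j_*j^*$ is not ``equivalently'' the localization killing $\mathrm{Loc}(e_1\Lambda)$; that localization is $i_*i^*$. It is smashing for free, but its kernel inclusion $j_!=-\otimes^{\mathbf L}_R e_1\Lambda$ need not have a left adjoint. Such an adjoint would force $j_!$ to preserve products, which fails in general since $e_1\Lambda\cong R\oplus M$ as a left $R$-module. So $i_*i^*$ does not fit Proposition~\ref{thm:MainB}.

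For $T=j_*j^*$ the kernel inclusion $i_*$ does have the left adjoint $i^*$, but smashing-ness is not automatic. $T$ preserves coproducts iff $j_*$ does, iff $j^*$ preserves compacts, iff $j^*(\Lambda)=\Lambda e_1$ is compact in $\mathbf{D}(R)$. This holds exactly because $e_2\Lambda e_1=0$ forces $\Lambda e_1\cong R$. It is not ``$e\Lambda$ compact over $R$''. Nor is it compactness of $e_1\Lambda$ in $\mathbf{D}(\Lambda)$, which your last paragraph proposes to verify; that is automatic and says nothing about $j_*$. The correct check is precisely the paper's input $Q(\Lambda)\cong S$ from \cite[Lemma~3.11]{Dguppertri}, in its left-module convention.

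For the converse, the relevant hypothesis is not boundedness of a gluing functor. As in Corollary~\ref{recollementiff}, you need, besides compactness of the image of $\Lambda$, that $\mathrm{Hom}(\Sigma^{-n}\Lambda,\Lambda)=H^n(\Lambda)=0$ for $n\gg0$. Here $H^n(\Lambda)=H^n(R)\oplus H^n(M)\oplus H^n(S)$. Once $\mathbf{D}(R)$ and $\mathbf{D}(S)$ are approximable, $H^n(R)$ and $H^n(S)$ vanish for $n\gg0$ by \cite[Lemma~3.5]{glue}. So the condition is exactly the assumption $H^n(M)=0$ for $n\gg0$, which is what \cite[Corollary~6.1]{glue} packages.
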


Finally, we study the intrinsic subcategories associated with an approximable triangulated category $\mathcal{B}$. Some of these subcategories have also been considered in \cite{KPV} in a more general setting, and for completeness we recall their relationships at the beginning of Section~\ref{section5}. We then show that separable extensions restrict naturally to these intrinsic subcategories. We conclude with several examples.

\medskip

\noindent\textbf{Organization of the paper.}
In Section~\ref{section2}, we recall the necessary background on approximable triangulated categories and separable extensions. Section~\ref{Section3} establishes the lifting of $t$-structures to Eilenberg--Moore categories. In Section~\ref{Section 4}, we prove the main theorem on the preservation of approximability under separable extensions and present its applications to recollements and upper triangular matrix dg-algebras. Finally, Section~\ref{section5} studies the restriction of separable extensions to intrinsic subcategories associated with approximable triangulated categories.

\section{Preliminaries}\label{section2}
In this section, we review a few general facts that we will use extensively throughout this paper. We also formally present an equivalent formulation of separable extensions of triangulated categories, although for concrete operations we will continue to follow Balmer's approach.

\subsection{Notations}

Let $\mathcal{B}=(\mathcal{B}, \Sigma, \Omega _{\mathcal{B}})$ be a triangulated category with $\Sigma$ the suspension, and $\Omega _{\mathcal{B}}$ the class of distinguished triangles of $\mathcal{B}$. The extension of full subcategories $\mathcal{X}$ and $\mathcal{Y}$ of $\mathcal{B}$, denoted by $\mathcal{X}*\mathcal{Y}$, is by definition the full subcategory of $\mathcal{B}$ consisting of objects $B$ such that there exists a distinguished triangle $X\longrightarrow B\longrightarrow Y\longrightarrow \Sigma X$ in $\mathcal{B}$, with $X\in \mathcal{X}$ and $Y\in \mathcal{Y}$. If $\mathcal{X}*\mathcal{X} \subseteq  \mathcal{X}$, then we say $\mathcal{X}$ is \emph{closed under extensions} in $\mathcal{B}$. By $\mathrm{Hom}_{\mathcal{B}}(\mathcal{X},\mathcal{Y})=0$, we mean that $\mathrm{Hom}_{\mathcal{B}}(X,Y)=0$ for any $X\in \mathcal{X}$ and $Y \in \mathcal{Y}$. We define 
$\mathcal{X}^{\perp }:=\left \{ Y\in \mathcal{B} \mid \mathrm{Hom}_{\mathcal{B}}(\mathcal{X},Y)=0 \right \} $ and $^{\perp } \mathcal{X}:=\left \{ Y\in \mathcal{B} \mid \mathrm{Hom}_{\mathcal{B}}(Y,\mathcal{X})=0\right \}$.

We recall the following notations from \cite[Definition 1.3]{str} (see also \cite[Definition 2.1]{BCMPZ}).

\begin{df}\rm
Let $\mathcal{B}$ be a triangulated category with coproducts, and let $\mathcal{X}$ be a class of objects in $\mathcal{B}$.
\begin{enumerate}
    \item[(1)] $\mathrm{smd}(\mathcal{X})$ (resp., $\mathrm{Add}(\mathcal{X})$) denotes the full subcategory of $\mathcal{B}$ consisting of all direct summands (resp., coproducts) of objects in $\mathcal{X}$. 
    \item[(2)] For $i>0$, the subcategories $\mathrm{Coprod}_{i}(\mathcal{X})$ of $\mathcal{B}$ are defined inductively by
    \[
    \mathrm{Coprod}_{1}(\mathcal{X}):=\mathrm{Add}(\mathcal{X}),\quad \mathrm{Coprod}_{i+1}(\mathcal{X}):= \mathrm{Coprod}_{1}(\mathcal{X})*\mathrm{Coprod}_{i}(\mathcal{X}).
    \]
    Moreover, $\mathrm{Coprod}(\mathcal{X})$ denotes the smallest full subcategory of $\mathcal{B}$ containing $\mathcal{X}$ that is closed under coproducts and extensions.
    \item[(3)] Let $X$ be an object of $\mathcal{B}$. If $m\leq n \in \mathbb{Z} $, we write $X[m,n]:=\left \{ \Sigma^{-i} X \mid m\le i\le n \right \} \subseteq \mathcal{B}$. Similarly, $X(-\infty,n]$, $X[m,+\infty)$, and $X(-\infty,+\infty)$ are defined. We set
    \[
    \overline{\langle X \rangle }_{i}^{[m,n]}:=\mathrm{smd}(\mathrm{Coprod}_{i}(X[m,n])), \quad \overline{\langle X \rangle }^{[m,n]}:=\mathrm{smd}(\mathrm{Coprod}(X[m,n])).
    \]
    We denote by $\overline{\langle X \rangle }$ the smallest full triangulated subcategory of $\mathcal{B}$ containing $X$ and closed under coproducts.
\end{enumerate}
\end{df}

\subsection{Approximable triangulated categories}

Approximable triangulated categories, recently introduced by Neeman in \cite{App,compactge}, have proven to be highly effective in studying the structure of triangulated categories. By definition, these categories are compactly generated, and any ``bounded above'' object can be systematically approximated by compact generators. This approximation framework relies heavily on the notion of $t$-structures \cite{BBD}.

\begin{df}\label{tstructure}\rm
Let $\mathcal{B}$ be a triangulated category. A \emph{$t$-structure} on $\mathcal{B}$ is a pair $(\mathcal{B}^{\le 0},\mathcal{B}^{\ge 0})$ of full subcategories of $\mathcal{B}$ such that:
\begin{enumerate}
    \item[(1)] $\mathcal{B}^{\le 0} \subseteq \mathcal{B}^{\le 1}$ and $\mathcal{B}^{\ge 0} \supseteq  \mathcal{B}^{\ge 1}$, where $\mathcal{B}^{\le 1}:=\Sigma^{-1}\mathcal{B}^{\le 0}$ and $\mathcal{B}^{\ge 1}:=\Sigma^{-1}\mathcal{B}^{\ge 0}$.
    \item[(2)] $\mathrm{Hom}_{\mathcal{B}}(X,Y)=0$ for any $X$ in $\mathcal{B}^{\le 0}$ and any $Y$ in $\mathcal{B}^{\ge 1}$.
    \item[(3)] For any object $B \in \mathcal{B}$, there exists a distinguished triangle
    \[
    X\longrightarrow B\longrightarrow Y\longrightarrow \Sigma X
    \]
    in $\mathcal{B}$, with $X\in \mathcal{B}^{\le 0}$ and $Y\in \mathcal{B}^{\ge 1}$.
\end{enumerate}
\end{df}

Let $(\mathcal{B}^{\le 0},\mathcal{B}^{\ge 0})$ be a $t$-structure on $\mathcal{B}$. The category $\mathcal{H}_{\mathcal{B}}:=\mathcal{B}^{\le 0} \cap \mathcal{B}^{\ge 0}$ is abelian and is called the \emph{heart} of the $t$-structure. We set
\[
\mathcal{B}^{-}:=  \bigcup_{n>0 } \mathcal{B}^{\le n}, \quad \mathcal{B}^{+}:= \bigcup_{n>0} \mathcal{B}^{\ge -n} \quad \text{and} \quad \mathcal{B}^{b}:=\mathcal{B}^{-}\bigcap \mathcal{B}^{+},
\]
where $\mathcal{B}^{\le n}:=\Sigma^{-n}\mathcal{B}^{\le 0}$ and $\mathcal{B}^{\ge n}:=\Sigma^{-n}\mathcal{B}^{\ge 0}$. We say that $(\mathcal{B}^{\le 0},\mathcal{B}^{\ge 0})$ is \emph{bounded above}, \emph{bounded below}, and \emph{bounded} if $\mathcal{B}=\mathcal{B}^{-}$, $\mathcal{B}=\mathcal{B}^{+}$, and $\mathcal{B}=\mathcal{B}^{b}$, respectively. The $t$-structure $(\mathcal{B}^{\le 0},\mathcal{B}^{\ge 0})$ is called \emph{nondegenerate} if $\bigcap_{n\in \mathbb{N} }\mathcal{B}^{\le -n}=\bigcap_{n\in \mathbb{N} }\mathcal{B}^{\ge n}=0$.

Two $t$-structures $(\mathcal{B}^{\le 0}_{1},\mathcal{B}^{\ge 0}_{1})$ and $(\mathcal{B}^{\le 0}_{2},\mathcal{B}^{\ge 0}_{2})$ are said to be \emph{equivalent} if there exists a natural number $n$ such that $\mathcal{B}^{\le -n}_{1}\subseteq \mathcal{B}^{\le 0}_{2}\subseteq \mathcal{B}^{\le n}_{1}$. Clearly, equivalent $t$-structures give rise to identical $\mathcal{B}^{-}$, $\mathcal{B}^{+}$, and $\mathcal{B}^{b}$; see \cite[Observation 1.16]{compactge}.

\begin{df}\rm
Let $\mathcal{B}$ be a triangulated category with coproducts. An object $C\in \mathcal{B}$ is \emph{compact} if the functor $\mathrm{Hom}_{\mathcal{B}}(C,-)$ preserves coproducts. A compact object \(C\) is called a {\it compact generator} for \(\mathcal B\) if for any $X\in \mathcal{B}$, $\mathrm{Hom}_{\mathcal{B}}(C,\Sigma^{n}X)=0$ for all $n \in \mathbb{Z} $ implies $X=0$.
\end{df}

We denote by $\mathcal{B}^{c}$ the full subcategory of $\mathcal{B}$ consisting of all compact objects. This is a full triangulated subcategory of $\mathcal{B}$ closed under direct summands. For any compact object $C$ of $\mathcal{B}$, there exists a unique $t$-structure $(\mathcal{B}^{\le 0}_{C},\mathcal{B}^{\ge 0}_{C}):=(\overline{\langle C \rangle }^{(-\infty,0]},C(-\infty,-1]^{\perp})$ on $\mathcal{B}$, which is called \emph{the $t$-structure generated by} $C$; see \cite[Theorem A.1]{Construction}.

\begin{df}\rm (\cite[Definition~1.18]{compactge})
Let $\mathcal{B}$ be a compactly generated triangulated category with a compact generator $C$. Then the \emph{preferred equivalence class of $t$-structures} on $\mathcal{B}$ is defined to be the one containing the $t$-structure $(\mathcal{B}^{\le 0}_{C},\mathcal{B}^{\ge 0}_{C})$ generated by $C$.
\end{df}

We now recall the definition of an approximable (or weakly approximable) triangulated category.

\begin{df}\label{approximable}\rm (\cite[Definition~1.25]{compactge})
A triangulated category $\mathcal{B}$ with coproducts is \emph{weakly approximable} if there exists a compact generator $C$, a $t$-structure $(\mathcal{B}^{\le 0},\mathcal{B}^{\ge 0})$, and an integer $n>0$, such that the following hold:
\begin{enumerate}
    \item[(1)] $\Sigma^{n} C\in \mathcal{B}^{\le 0}$ and $\mathrm{Hom}_{\mathcal{B}}(\Sigma^{-n} C,\mathcal{B}^{\le 0})=0$.
    \item[(2)] For every $B \in \mathcal{B}^{\le 0}$, there exists a distinguished triangle
    \[
    X\longrightarrow B\longrightarrow Y\longrightarrow \Sigma X
    \]
    in $\mathcal{B}$, with $X\in \overline{\langle C \rangle} ^{[-n,n]}$ and $Y\in \mathcal{B}^{\le -1}$.
\end{enumerate}
The category $\mathcal{B}$ is called \emph{approximable} if the integer $n$ can be chosen to further satisfy:
\begin{enumerate}
    \item[(3)] For every $B \in \mathcal{B}^{\le 0}$, there exists a distinguished triangle
    \[
    X\longrightarrow B\longrightarrow Y\longrightarrow \Sigma X
    \]
    in $\mathcal{B}$, with $X\in \overline{\langle C \rangle}_{n} ^{[-n,n]}$ and $Y\in \mathcal{B}^{\le -1}$.
\end{enumerate}
\end{df}

Indeed, the $t$-structure $(\mathcal{B}^{\le 0},\mathcal{B}^{\ge 0})$ in Definition \ref{approximable} necessarily belongs to the preferred equivalence class. Furthermore, the property of being (weakly) approximable is independent of the choice of the compact generator. In fact, \cite[Lemma~3.7]{glue} and \cite[Proposition~3.6]{compactge} imply that once a triangulated category is known to be (weakly) approximable, any compact generator and any $t$-structure in the preferred equivalence class will fulfill the criteria of Definition \ref{approximable}.

\subsection{Separable extensions of triangulated categories}\label{section 2.3}

In this subsection, we recall some basic facts from \cite{septri} concerning separable extensions of triangulated categories, which build upon the theory of monads; see \cite{barr2000toposes, categories}.

Let $\mathcal{B}$ be an additive category. A \emph{monad} on $\mathcal{B}$ is a triple ${\bf T}= (T,\theta, \mu)$ consisting of an additive endofunctor $T:\mathcal{B} \to \mathcal{B}$ and two natural transformations $\theta:\mathrm{Id}_{\mathcal{B} }\to T$ (the unit) and $\mu :T^{2} \to T$ (the multiplication), subject to the relations $\mu \circ T\mu =\mu \circ \mu T $ and $\mu \circ T\theta =\mu \circ \theta T=\mathrm{Id}_{T } $. Let $\mathcal{A}$ and $\mathcal{B}$ be additive categories, and consider an adjunction
\[
\xymatrix{
     \mathcal{A} \ar[rr]^{G} && \mathcal{B} \ar@/_1.1pc/[ll]_{F} 
}
\]
from $\mathcal{B}$ to $\mathcal{A}$ with unit $\theta:\mathrm{Id}_{\mathcal{B} }\to GF$ and counit $\varepsilon : FG\to \mathrm{Id}_{\mathcal{A}}$. This adjunction $(F, G; \theta, \varepsilon)$ defines a monad ${\bf T}= (T=GF,\theta, \mu=G\varepsilon F)$ on $\mathcal{B}$, which is called the \emph{monad defined by the adjunction}.

Conversely, Eilenberg and Moore \cite{EM}, and Kleisli \cite{Kleisli}, proved that any monad is induced by an adjoint pair of functors. Here, we present the Eilenberg--Moore construction. For a monad ${\bf T}$ on $\mathcal{B}$, a \emph{${\bf T}$-module} is a pair $(B, h_{B})$ consisting of an object $B\in \mathcal{B}$ (the underlying object) and a morphism $h_{B}:T(B)\to B$ in $\mathcal{B}$, subject to the relations $h_{B}\circ \theta _{B}=\mathrm{Id}_{B}$ and $h_{B}\circ T(h_{B})=h_{B}\circ \mu _{B}$. A morphism $f:(B, h_{B})\to(B^{\prime}, h_{B^{\prime}})$ of ${\bf T}$-modules is a morphism $f:B\to B^{\prime}$ in $\mathcal{B}$ satisfying $h_{B^{\prime }}\circ T(f)=f\circ h_{B}$. This defines the \emph{Eilenberg--Moore category} ${\bf T}\text{-}\mathrm{Mod}_{\mathcal{B}}$ of ${\bf T}$-modules. 

Consider the functor $F^{{\bf T}}:\mathcal{B} \to {\bf T}\text{-}\mathrm{Mod}_{\mathcal{B}}$ sending $B$ to $(T(B),\mu_{B})$ and a morphism $f:A\to B$ to the morphism $T(f)$. Denote by $G^{{\bf T}}:{\bf T}\text{-}\mathrm{Mod}_{\mathcal{B}} \to \mathcal{B}$ the forgetful functor. Then $(F^{{\bf T}}, G^{{\bf T}}; \theta^{{\bf T}}, \varepsilon^{{\bf T}})$ forms an adjunction from $\mathcal{B}$ to ${\bf T}\text{-}\mathrm{Mod}_{\mathcal{B}}$ with $G^{\bf T}F^{\bf T}=T$. Note that $\theta ^{{\bf T}}=\theta$ and $\varepsilon^{{\bf T}}(B, h_{B})=h_{B}$ for each ${\bf T}$-module $(B, h_{B})$.

In light of the preceding discussion, an adjunction $(F, G; \theta, \varepsilon)$ induces a monad ${\bf T}= (GF,\theta, G\varepsilon F)$ on $\mathcal{B}$, which in turn yields an adjunction $(F^{{\bf T}}, G^{{\bf T}}; \theta^{{\bf T}}, \varepsilon^{{\bf T}})$ from $\mathcal{B}$ to ${\bf T}\text{-}\mathrm{Mod}_{\mathcal{B}}$. By \cite[Theorem VI.3.1]{categories}, there exists a unique functor $\varphi: \mathcal{A} \to {\bf T}\text{-}\mathrm{Mod}_{\mathcal{B}}$ that maps an object $A$ to $(G(A), G(\varepsilon_{A} ))$ and a morphism $f$ to $G(f)$, making the following diagram commutative:
\[
\xymatrix@=1.2cm{
A \ar[d]^{\varphi } \ar[r]^{G} & B \ar@/_1pc/[l]_{F} \ar@{=}[d]\\
{\bf T}\text{-}\mathrm{Mod}_{\mathcal{B}} \ar[r]^-{G^{\bf T}} & B. \ar@/_1pc/[l]_-{F^{\bf T}}
}
\]
The unique functor $\varphi$ is called the \emph{Eilenberg--Moore comparison functor}. If $\varphi$ is an equivalence of categories, then $G$ is said to be \emph{monadic}.

It is a standard fact that ${\bf T}\text{-}\mathrm{Mod}_{\mathcal{B}}$ is an additive category. Referring to \cite[Proposition 4.3.2]{handbook}, we have the following fact which states that coproducts in $\mathcal{B}$ induce  coproducts in ${\bf T}\text{-}\mathrm{Mod}_{\mathcal{B}}$. To present the construction of coproducts in ${\bf T}\text{-}\mathrm{Mod}_{\mathcal{B}}$ for later use, we provide a detailed proof here.

\begin{lem}\label{coproducts}\rm
Let $\mathcal{B}$ be an additive category with coproducts and ${\bf T}=(T, \theta, \mu)$ a monad on $\mathcal{B}$. Suppose that $T$ preserves coproducts. Then ${\bf T}\text{-}\mathrm{Mod}_{\mathcal{B}}$ has coproducts, and the forgetful functor $G^{{\bf T}}$ preserves them.
\end{lem}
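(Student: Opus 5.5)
Given a family $\{(B_i,h_{B_i})\}_{i\in I}$ of $\mathbf{T}$-modules, I will build its coproduct explicitly on the underlying coproduct in $\mathcal{B}$. Let $B=\coprod_i B_i$ with canonical injections $\iota_i:B_i\to B$. Since $T$ preserves coproducts, the family $T(\iota_i):T(B_i)\to T(B)$ exhibits $T(B)$ as a coproduct of the $T(B_i)$. Hence there is a unique morphism $h_B:T(B)\to B$ with
\[
h_B\circ T(\iota_i)=\iota_i\circ h_{B_i}\quad\text{for all } i\in I,
\]
namely $h_B=\coprod_i h_{B_i}$ transported along the canonical isomorphism $\coprod_i T(B_i)\cong T(B)$. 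By construction each $\iota_i$ is a morphism of $\mathbf{T}$-modules once I know that $(B,h_B)$ is a $\mathbf{T}$-module.

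Next I check the module axioms by testing against the coproduct injections. For the unit axiom, naturality of $\theta$ gives $h_B\circ\theta_B\circ\iota_i=h_B\circ T(\iota_i)\circ\theta_{B_i}=\iota_i\circ h_{B_i}\circ\theta_{B_i}=\iota_i$, so $h_B\circ\theta_B=\mathrm{Id}_B$. For associativity I precompose with $T^2(\iota_i)$. Because $T$, and hence $T^2$, preserves coproducts, the morphisms $T^2(\iota_i)$ are jointly epimorphic out of $T^2(B)$. Then
\[
h_B\circ T(h_B)\circ T^2(\iota_i)=h_B\circ T(\iota_i)\circ T(h_{B_i})=\iota_i\circ h_{B_i}\circ T(h_{B_i}),
\]
and using naturality of $\mu$,
\[
h_B\circ\mu_B\circ T^2(\iota_i)=h_B\circ T(\iota_i)\circ\mu_{B_i}=\iota_i\circ h_{B_i}\circ\mu_{B_i}.
\]
The two right-hand sides agree by the associativity axiom for $B_i$. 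This is the only step where $T^2$ preserving coproducts is genuinely needed, and I expect it to be the main point of care.

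For the universal property, take a module $(N,h_N)$ and module maps $f_i:(B_i,h_{B_i})\to(N,h_N)$. There is a unique $f:B\to N$ in $\mathcal{B}$ with $f\iota_i=f_i$, so it remains only to check that $f$ is a module map. Testing against $T(\iota_i)$ gives
\[
h_N\circ T(f)\circ T(\iota_i)=h_N\circ T(f_i)=f_i\circ h_{B_i}=f\circ\iota_i\circ h_{B_i}=f\circ h_B\circ T(\iota_i).
\]
Since the $T(\iota_i)$ are jointly epimorphic, $h_N\circ T(f)=f\circ h_B$. Uniqueness is inherited from $\mathcal{B}$, because $G^{\mathbf{T}}$ is faithful.

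Thus $(B,h_B)$ together with the $\iota_i$ is a coproduct in $\mathbf{T}\text{-}\mathrm{Mod}_{\mathcal{B}}$. Its image under $G^{\mathbf{T}}$ is $\coprod_i B_i$ with its canonical injections, so $G^{\mathbf{T}}$ preserves coproducts.
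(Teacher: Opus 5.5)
Your proposal is correct and follows essentially the same route as the paper. Both arguments define the structure map on $\coprod_i B_i$ via the coproduct property of $T(\coprod_i B_i)$ and verify the unit and associativity axioms by testing against $\iota_i$ and $T^2(\iota_i)$ using naturality of $\theta$ and $\mu$. Both then check that the induced map is a module morphism by testing against $T(\iota_i)$, with uniqueness inherited from $\mathcal{B}$.
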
	
\begin{proof}\rm
Let $\left \{ (X_{\lambda },h_{X_{\lambda}}) \right \}_{\lambda\in \Lambda} $ be a set of objects in ${\bf T}\text{-}\mathrm{Mod}_{\mathcal{B}}$. The underlying objects $\left \{ X_{\lambda } \right \}_{\lambda\in \Lambda} $ form a set in $\mathcal{B}$. Since $\mathcal{B}$ has coproducts, let $(\coprod_{\lambda \in \Lambda }X_{\lambda},e_{\lambda})$ denote the coproduct of $\left \{ X_{\lambda } \right \}_{\lambda\in \Lambda} $, where $e_{\lambda}:X_{\lambda}\to \coprod_{\lambda \in \Lambda }X_{\lambda}$ are the canonical injections. Since $T$ preserves coproducts, we obtain a unique factorization $h:T(\coprod_{\lambda \in \Lambda }X_{\lambda})\to \coprod_{\lambda \in \Lambda }X_{\lambda}$ such that the following diagram is commutative for any $\lambda \in \Lambda$:
\[
\xymatrix{
    T(X_{\lambda}) \ar[rr]^{T(e_{\lambda})} \ar[d]^{h_{X_{\lambda}}} &&  T(\coprod_{\lambda \in \Lambda }X_{\lambda}) \ar[d]^{h} \\
    X_{\lambda} \ar[rr]^{e_{\lambda}} && \coprod_{\lambda \in \Lambda }X_{\lambda}.
}
\]
We claim that $(\coprod_{\lambda \in \Lambda }X_{\lambda},h)$ is a ${\bf T}$-module. First, since $(X_{\lambda},h_{X_\lambda})$ is a ${\bf T}$-module, we have $h_{X_{\lambda}} \circ \theta_{X_{\lambda}}=\mathrm{Id}_{X_\lambda} $. The natural transformation $\theta$ induces the following commutative diagram for any $\lambda \in \Lambda$:
\[
\xymatrix{
    T(X_{\lambda}) \ar[rr]^{T(e_{\lambda})}  &&  T(\coprod_{\lambda \in \Lambda }X_{\lambda}) \\
    X_{\lambda} \ar[rr]^{e_{\lambda}} \ar[u]^{\theta _{X_{\lambda}}} && \coprod_{\lambda \in \Lambda }X_{\lambda}. \ar[u]^{\theta _{\coprod_{\lambda \in \Lambda }X_{\lambda}}}
}
\]
Hence, $e_{\lambda} = e_{\lambda}\circ h_{X_{\lambda}} \circ \theta_{X_{\lambda}} = h \circ \theta_{\coprod_{\lambda \in \Lambda }X_{\lambda}} \circ e_{\lambda}$. Because the coproduct is equipped with a universal property, this implies $h \circ \theta_{\coprod_{\lambda \in \Lambda }X_{\lambda}}=\mathrm{Id}_{\coprod_{\lambda \in \Lambda }X_{\lambda}}$.

Next, since $T$ preserves coproducts, $\left(T^{2}(\coprod_{\lambda \in \Lambda }X_{\lambda}),T^{2}(e_{\lambda})\right)$ is the coproduct of $\left \{ T^{2}(X_{\lambda }) \right \}_{\lambda\in \Lambda}$. The natural transformation $\mu$ induces the following commutative diagram for any $\lambda \in \Lambda$:
\[
\xymatrix{
     T^{2}(X_{\lambda })  \ar[rr]^{T^{2}(e_{\lambda})} \ar[d]^{\mu _{X_{\lambda}}} &&  T^{2}(\coprod_{\lambda \in \Lambda }X_{\lambda}) \ar[d]^{\mu_{\coprod_{\lambda \in \Lambda }X_{\lambda}}} \\
    T(X_{\lambda}) \ar[rr]^{T(e_{\lambda})} && T(\coprod_{\lambda \in \Lambda }X_{\lambda}).
}
\]
We then have the following sequence of equalities:
\begin{align*}
    h \circ \mu_{\coprod_{\lambda \in \Lambda }X_{\lambda}} \circ T^{2}(e_{\lambda}) 
    &= h \circ T(e_{\lambda}) \circ \mu_{X_{\lambda}} \\
    &= e_{\lambda} \circ h_{X_{\lambda}} \circ \mu_{X_{\lambda}} \\
    &= e_{\lambda} \circ  h_{X_{\lambda}} \circ T(h_{X_{\lambda}}) \\
    &= h \circ T(e_{\lambda}) \circ T(h_{X_{\lambda}}) \\
    &= h \circ T(h) \circ T^{2}(e_{\lambda}).
\end{align*}
This implies $h \circ \mu_{\coprod_{\lambda \in \Lambda }X_{\lambda}} = h \circ T(h)$. Thus, $(\coprod_{\lambda \in \Lambda }X_{\lambda},h)$ is indeed a ${\bf T}$-module. Furthermore, the relation $h\circ T(e_{\lambda})=e_{\lambda}\circ h_{X_{\lambda}}$ shows that the injections $e_{\lambda}:(X_{\lambda },h_{X_{\lambda}})\to (\coprod_{\lambda \in \Lambda }X_{\lambda},h)$ are morphisms of ${\bf T}$-modules for all $\lambda \in \Lambda$.

Finally, given an object $(Y,h_{Y})$ and morphisms $f_{\lambda}:(X_{\lambda},h_{X_{\lambda}})\to (Y,h_{Y})$ in ${\bf T}\text{-}\mathrm{Mod}_{\mathcal{B}}$, there exists a unique factorization $s:\coprod_{\lambda \in \Lambda }X_{\lambda}\to Y$ in $\mathcal{B}$ such that $f_{\lambda}=s\circ e_{\lambda}$ for all $\lambda \in \Lambda$. We deduce that $s:(\coprod_{\lambda \in \Lambda }X_{\lambda},h)\to (Y,h_{Y})$ is actually a morphism in ${\bf T}\text{-}\mathrm{Mod}_{\mathcal{B}}$, because
\[
h_{Y}\circ T(s)\circ T(e_{\lambda}) = h_{Y}\circ T(f_{\lambda}) = f_{\lambda}\circ h_{X_{\lambda}} = s\circ e_{\lambda} \circ h_{X_{\lambda}} = s\circ h\circ T(e_{\lambda}),
\]
where we used the assumption that $T$ preserves coproducts. This proves that $(\coprod_{\lambda \in \Lambda }X_{\lambda},h)$ is the coproduct of $\left \{ (X_{\lambda },h_{X_{\lambda}}) \right \}_{\lambda\in \Lambda}$. It is clear from the construction that the forgetful functor $G^{\bf T}$ preserves coproducts.
\end{proof}

\begin{rem}\rm 
Since $(F^{{\bf T}}, G^{{\bf T}})$ is an adjunction from $\mathcal{B}$ to ${\bf T}\text{-}\mathrm{Mod}_{\mathcal{B}}$, the left adjoint $F^{{\bf T}}$ preserves coproducts as well. Consequently, coproducts in these two categories are completely compatible.
\end{rem}

Suppose that $\mathcal{B}$ is idempotent complete. If ${\bf T}\text{-}\mathrm{Mod}_{\mathcal{B}}$ is also idempotent complete, then we say that ${\bf T}$ is an \emph{additive monad}.
When the category $\mathcal{B}$ is equipped with a suspension $\Sigma$, an additive monad ${\bf T}$ is said to be \emph{stable} if $T$, $\theta$, and $\mu$ commute with the suspension: $T\Sigma =\Sigma T$, $\theta \Sigma =\Sigma \theta$, and $\mu \Sigma =\Sigma \mu$. If moreover, $\mathcal{A}$ and $\mathcal{B}$ are suspended and $F$ and $G$ commute with the suspension, we assume that the adjunction respects the suspension, meaning that $\theta$ and $\varepsilon$ commute with the suspension. In this case, the monad ${\bf T}= (GF,\theta, G\varepsilon F)$ is inherently stable.

Unless otherwise stated, we henceforth assume that $\mathcal{B}$ is a triangulated category. An additive functor $G:\mathcal{A}\to \mathcal{B}$ between triangulated categories is called \emph{exact} if it commutes with the suspensions and preserves distinguished triangles. Recall from \cite[Proposition~6.3]{hopf} that a monad ${\bf T}= (T,\theta, \mu)$ on a category $\mathcal{B}$ is \emph{separable} provided that there exists a natural transformation $\varpi :T\to T^{2}$ satisfying $\mu \circ \varpi =\mathrm{Id}_{T}$ and $T\mu \circ \varpi T=\varpi \circ \mu=\mu T\circ T\varpi$. If $\varpi :T\to T^{2}$ commutes with the suspension as well, we say that ${\bf T}$ is \emph{stably separable}. The following result is implicit in \cite[Theorem~5.17]{septri}; see also \cite[Lemma 2.4]{KK}, where the authors drop the idempotent completeness assumption. 

\begin{thm}\label{balmer}\rm
Let $\mathcal{B}$ be a triangulated category and ${\bf T}=(T,\theta,\mu)$ a stably separable exact monad on $\mathcal{B}$. Then the Eilenberg--Moore category ${\bf T}\text{-}\mathrm{Mod}_{\mathcal{B}}$ admits a unique canonical pre-triangulated structure such that $F^{{\bf T}}$ and $G^{\bf T}$ are both exact functors. More precisely, a triangle of ${\bf T}$-modules $\Omega _{{\bf T}\text{-}\mathrm{Mod}_{\mathcal{B}}}$ is distinguished exactly when the underlying triangle $G^{\bf T}(\Omega _{{\bf T}\text{-}\mathrm{Mod}_{\mathcal{B}}})$ is distinguished in $\mathcal{B}$.
\end{thm}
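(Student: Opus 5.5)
The plan follows Balmer's argument in \cite[Theorem~5.17]{septri}, without assuming idempotent completeness as in \cite[Lemma 2.4]{KK}. We declare a triangle in ${\bf T}\text{-}\mathrm{Mod}_{\mathcal{B}}$ distinguished exactly when its image under $G^{\bf T}$ is distinguished in $\mathcal{B}$; this definition is forced by the requirement that $G^{\bf T}$ be exact, which gives uniqueness. Suspension on modules is $\Sigma(B,h_B)=(\Sigma B,\Sigma h_B)$. This is again a ${\bf T}$-module because $T$, $\theta$ and $\mu$ commute with $\Sigma$ by stability. The key tool is a natural splitting. For a module $(B,h_B)$, separability gives the morphism
\[
\sigma_B:=T(h_B)\circ\varpi_B\circ\theta_B:B\to T(B).
\]
One checks that $\sigma_B$ is a morphism of ${\bf T}$-modules $(B,h_B)\to F^{\bf T}(B)=(T(B),\mu_B)$ with $h_B\circ\sigma_B=\mathrm{Id}_B$, using $\mu\circ\varpi=\mathrm{Id}$ and the two middle-linearity identities for $\varpi$. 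Hence every module is a retract, inside ${\bf T}\text{-}\mathrm{Mod}_{\mathcal{B}}$, of a free module $F^{\bf T}(B)$, and the splitting is natural in module maps.

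Next we verify the axioms. Closure under isomorphism, the identity triangles, and rotation are immediate, because $G^{\bf T}$ is faithful, reflects isomorphisms, and commutes with $\Sigma$. For TR1 (existence of cones), take a module map $f:(X,h_X)\to(Y,h_Y)$. The exact functor $F^{\bf T}$ sends a distinguished triangle on $G^{\bf T}f$ to a candidate triangle on $F^{\bf T}G^{\bf T}f$ of free modules. This triangle is distinguished in our sense, since its underlying triangle is $T$ applied to a distinguished triangle and $T$ is exact. The original $f$ is a retract of $F^{\bf T}G^{\bf T}f$ via $\sigma$ and $h$. So we want the cone of $f$ to be the retract of $T(\mathrm{cone})$ cut out by an idempotent module endomorphism $e$ of that triangle, obtained by completing the retraction data to a morphism of triangles. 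This step is the main obstacle: the cone morphism is not canonical, so $e$ need not be idempotent, and without idempotent completeness we cannot simply split it.

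To handle this, I would first try the trick from \cite{KK}. Work in $\mathcal{B}$: choose a distinguished triangle $X\to Y\to Z\to\Sigma X$ on $G^{\bf T}f$ and try to put a module structure $h_Z$ on $Z$ directly. Apply $T$, use $h_X$ and $h_Y$, and complete to a map $T(Z)\to Z$ by TR3 in $\mathcal{B}$. Then correct this map by averaging with $\varpi$, namely replace a candidate $k$ by $k':=h\text{-type composite }k\circ T(k)\circ\varpi_Z$. The averaging should force the unit and associativity laws, with the separability identities doing the verification. If this fails, the fallback is to show that a suitable modification $e^3-\dots$ of $e$ (a standard Neeman-type argument, which works because $e$ is idempotent up to a nilpotent error) has a genuine splitting whose underlying object is $Z$.

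For TR3 (completing morphisms of triangles), given two distinguished triangles of modules and a commutative square, complete the square in $\mathcal{B}$ to a map $w$ on the third terms. Then make $w$ ${\bf T}$-linear by replacing it with $h_{Z'}\circ T(w)\circ\sigma_Z$. Linearity follows from naturality of $\varpi$, and the replacement still makes the diagram commute, since the other two components are already linear and equal their own averages. Finally, exactness of $F^{\bf T}$ holds by construction, and exactness of $G^{\bf T}$ holds by definition.
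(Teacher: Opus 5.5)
The paper gives no proof of this statement. It defers to Balmer's Theorem~5.17 and to Lemma~2.4 of [KK], so your argument is judged on its own. Several pieces are correct:
\begin{itemize}
\item The splitting $\sigma_B=T(h_B)\varpi_B\theta_B$ is a module map, natural in module maps, with $h_B\sigma_B=\mathrm{Id}$.
\item Your TR3 argument is sound: $h_{Z'}\circ T(w)\circ\sigma_Z$ is a module map and still completes both squares, by naturality of $\sigma$ and $h\circ\sigma=\mathrm{Id}$.
\end{itemize}

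The gap is TR1. You rightly flag it as the main obstacle but do not close it.

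Your first suggestion replaces a TR3-completion $k\colon T(Z)\to Z$ of $(h_X,h_Y)$ by $k\circ T(k)\circ\varpi_Z$. This does not yield a module structure in general. Here is a counterexample.
\begin{itemize}
\item Take $T=\mathrm{Id}\oplus\mathrm{Id}$, with summands indexed by orthogonal idempotents $e_1,e_2$. Let $\theta$ be the diagonal, let $\mu$ keep the components $e_i\otimes e_i$ and kill $e_1\otimes e_2$ and $e_2\otimes e_1$, and set $\varpi(e_i)=e_i\otimes e_i$.
\item A module structure on $Z$ is then a pair $(k_1,k_2)$ of orthogonal idempotents with $k_1+k_2=\mathrm{Id}$, and your formula sends $(k_1,k_2)$ to $(k_1^2,k_2^2)$.
\item Give $X_0$ the structure $(0,\mathrm{Id})$ and $Y_0$ the structure $(\mathrm{Id},0)$, and let $f=0$, so $Z=Y_0\oplus\Sigma X_0$.
\item Pick any nonzero $\delta\colon\Sigma X_0\to Y_0$ and view it as a square-zero endomorphism $d$ of $Z$. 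Then $(k_1,k_2)=(\pi_1+d,\pi_2)$, with $\pi_i$ the summand idempotents, is a legitimate TR3-completion.
\item This $k$ is fixed by your averaging, yet $k_1^2+k_2^2=\mathrm{Id}+d\neq\mathrm{Id}$.
\end{itemize}
The underlying reason is that the ambiguity in a TR3-completion is a map $d$ with $(0,0,d)$ a morphism of triangles. Such maps are only square-zero, and nothing in $\varpi$ kills them.

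Your fallback has the right first half. Choose the third component $e_Z$ of the endomorphism of $F^{\mathbf T}(\Delta)$ to be a module map, using your TR3 trick. Its defect $e_Z^2-e_Z$ is square-zero, so $q=3e_Z^2-2e_Z^3$ is an idempotent module endomorphism. However, the claim that $q$ splits with underlying object $Z$ is the entire content of the step, and you give no argument. This is exactly where Balmer's argument needs idempotent completeness. One way to supply it:
\begin{itemize}
\item Take TR3-completions $s\colon Z\to T(Z)$ of $(\sigma_X,\sigma_Y)$ and $k\colon T(Z)\to Z$ of $(h_X,h_Y)$.
\item Then $\alpha=kqs$ completes $(\mathrm{Id},\mathrm{Id})$, so it is invertible.
\item Put $\iota=qs$ and $\pi=\alpha^{-1}kq$. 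Then $\pi\iota=\mathrm{Id}_Z$.
\item The triple $(e_X,e_Y,\iota\pi)$ is a composite of morphisms of triangles, namely $(e_X,e_Y,q)$, $(\sigma_X,\sigma_Y,s)$, $(\mathrm{Id},\mathrm{Id},\alpha^{-1})$, $(h_X,h_Y,k)$ and $(e_X,e_Y,q)$.
\item Hence $(0,0,q-\iota\pi)$ is an endomorphism of $T(\Delta)$. Its third component is idempotent and square-zero, so it vanishes, and $q=\iota\pi$.
\item Then $h_Z:=\pi\circ\mu_Z\circ T(\iota)$ is a module structure. For it, $g=\pi T(g)\sigma_Y$ and $w=h_{\Sigma X}T(w)\iota$ are module maps.
\end{itemize}
Alternatively, extend $\mathbf T$ to the idempotent completion of $\mathcal B$, apply Balmer's theorem there, and use that $\mathcal B$ is closed under cones in it.

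Finally, uniqueness is not automatic from exactness of $G^{\mathbf T}$, which gives only one inclusion. For the other inclusion, compare a given triangle with a cone in the competing structure using your TR3 trick. Then use that $G^{\mathbf T}$ reflects isomorphisms.
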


This powerful triangular construction is referred to as \emph{separable extensions} of triangulated categories, which unifies several important constructions, including:
\begin{enumerate}
    \item[(1)] Bousfield localizations, see \cite[Example~6.3]{septri};
    \item[(2)] \'Etale extensions in algebraic geometry, see \cite[Theorem~3.5]{combal};
    \item[(3)] Restriction to subgroups in equivariant stable homotopy categories, in equivariant KK-theory, and in equivariant derived categories, see \cite{sepexample}.
\end{enumerate}

\begin{rem}\label{remsep}\rm
\begin{enumerate}
    \item[(1)] It is well known that if we assume $\mathcal{B}$ admits a triangulation of order $N\geq 3$, then ${\bf T}\text{-}\mathrm{Mod}_{\mathcal{B}}$ is indeed a valid triangulated category \cite[Section 5]{septri}.  In the remainder of this paper, we shall simply treat ${\bf T}\text{-}\mathrm{Mod}_{\mathcal{B}}$ as a triangulated category equipped with this canonical triangulated structure.
    \item[(2)] Let ${\bf T}=(T,\theta,\mu)$ be a stable monad on $\mathcal{B}$. By \cite[Lemma 3.2]{Anote}, ${\bf T}$ is separable if and only if $G^{\bf T}$ is a separable functor, where a right adjoint functor $G^{\bf T}$ is \emph{separable} if and only if its counit (viewed as a natural transformation) admits a section; see, for example, \cite[Lemma 2.2]{Anote}.
    \item[(3)] Conversely, given an adjunction $(F, G; \theta, \varepsilon)$ from $\mathcal{B}$ to $\mathcal{A}$ with $\mathcal{A}$ idempotent complete, $G$ is separable if and only if the forgetful functor $G^{{\bf T}}:{\bf T}\text{-}\mathrm{Mod}_{\mathcal{B}} \to \mathcal{B}$ is separable and the comparison functor $\varphi: \mathcal{A} \to {\bf T}\text{-}\mathrm{Mod}_{\mathcal{B}}$ is an equivalence \cite[Corollary 3.6]{Anote} (i.e., $G$ is monadic).
\end{enumerate}
\end{rem}

Based on the descriptions above, we make the following hypothesis for the remainder of this paper.

\begin{phy}\label{ph}\rm
Let $\mathcal{B}$ be a triangulated category and let ${\bf T}=(T,\theta,\mu)$ be a stably separable exact monad on $\mathcal{B}$. Assume that \(\mathbf T\text{-Mod}_{\mathcal B}\) admits a canonical triangulated structure. If $\mathcal{B}$ admits coproducts, assume moreover that ${\bf T}$ is \emph{smashing}, meaning that $T$ preserves coproducts.
\end{phy}

\section{Lifting t-structures}\label{Section3}

In this section, we investigate the lifting of $t$-structures from $\mathcal{B}$ to the Eilenberg--Moore category $\mathbf{T}\text{-}\mathrm{Mod}_{\mathcal{B}}$, with particular focus on the $t$-structure generated by a compact object. We begin with the following definition.

\begin{df}\rm
Let $\mathcal{A}$ and $\mathcal{B}$ be two triangulated categories equipped with $t$-structures $(\mathcal{A}^{\le 0},\mathcal{A}^{\ge 0})$ and $(\mathcal{B}^{\le 0},\mathcal{B}^{\ge 0})$, respectively. An exact functor $G:\mathcal{A}\to \mathcal{B}$ is said to be:
\begin{enumerate}
    \item[(1)] \emph{left $t$-exact} if $G(\mathcal{A}^{\ge 0})\subseteq \mathcal{B}^{\ge 0}$;
    \item[(2)] \emph{right $t$-exact} if $G(\mathcal{A}^{\le 0})\subseteq \mathcal{B}^{\le 0}$;
    \item[(3)] \emph{$t$-exact} if it is both left and right $t$-exact.
\end{enumerate}
\end{df}

For any full subcategory $\mathcal{S}$ of $\mathcal{B}$, we denote by $\mathcal{S}^{\mathbf{T}}$ the full subcategory of $\mathbf{T}\text{-}\mathrm{Mod}_{\mathcal{B}}$ consisting of objects $(X,h_{X}) \in \mathbf{T}\text{-}\mathrm{Mod}_{\mathcal{B}}$ satisfying $X \in \mathcal{S}$. Note that we do not a priori require $T(\mathcal{S}) \subseteq \mathcal{S}$. The following lemma is straightforward.

\begin{lem}\label{stab}\rm
Assume that $\mathcal{B}$ satisfies Hypothesis~\ref{ph}. Let $\mathcal{S}$ be a triangulated (resp., thick) subcategory of $\mathcal{B}$. Then $\mathcal{S}^{\mathbf{T}}$ is a triangulated (resp., thick) subcategory of $\mathbf{T}\text{-}\mathrm{Mod}_{\mathcal{B}}$. In particular, if $T(\mathcal{S}) \subseteq \mathcal{S}$, then $\mathcal{S}^{\mathbf{T}}=\mathbf{T}\text{-}\mathrm{Mod}_{\mathcal{S}}$ is a separable extension of $\mathcal{S}$.
\end{lem}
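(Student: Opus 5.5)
The approach is to check the axioms directly, using that the canonical triangulated structure on $\mathbf{T}\text{-}\mathrm{Mod}_{\mathcal{B}}$ is detected by the forgetful functor $G^{\mathbf{T}}$ (Theorem~\ref{balmer}). The key observation is that $\mathcal{S}^{\mathbf{T}}=(G^{\mathbf{T}})^{-1}(\mathcal{S})$. Any property of $\mathcal{S}$ that is stable under taking preimages along an exact functor then transfers immediately.

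\emph{Triangulated case.} First, $\mathcal{S}^{\mathbf{T}}$ is closed under isomorphisms. An isomorphism of $\mathbf{T}$-modules has an underlying isomorphism in $\mathcal{B}$, and $\mathcal{S}$ is replete.

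Second, $\mathcal{S}^{\mathbf{T}}$ is closed under $\Sigma^{\pm1}$. By stability, $\Sigma(X,h_X)=(\Sigma X,\Sigma h_X)$. So $G^{\mathbf{T}}\Sigma=\Sigma G^{\mathbf{T}}$, and $\Sigma^{\pm1}X\in\mathcal{S}$ whenever $X\in\mathcal{S}$.

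Third, $\mathcal{S}^{\mathbf{T}}$ satisfies two-out-of-three. Take a distinguished triangle $(X,h_X)\to(Y,h_Y)\to(Z,h_Z)\to\Sigma(X,h_X)$ in $\mathbf{T}\text{-}\mathrm{Mod}_{\mathcal{B}}$ with two terms in $\mathcal{S}^{\mathbf{T}}$. By Theorem~\ref{balmer}, the underlying triangle $X\to Y\to Z\to\Sigma X$ is distinguished in $\mathcal{B}$. Since $\mathcal{S}$ is triangulated, the third underlying object also lies in $\mathcal{S}$.

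\emph{Thick case.} Suppose in addition that $\mathcal{S}$ is closed under summands, and let $(X,h_X)\oplus(X',h_{X'})\in\mathcal{S}^{\mathbf{T}}$. The forgetful functor is additive, so the underlying object is $X\oplus X'\in\mathcal{S}$. Hence $X,X'\in\mathcal{S}$, as required.

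\emph{The case $T(\mathcal{S})\subseteq\mathcal{S}$.} The monad restricts to $\mathcal{S}$: $T|_{\mathcal{S}}$ has its image in $\mathcal{S}$, and we restrict $\theta,\mu,\varpi$ along the full embedding. All the required identities, including stability and exactness, are inherited, because $\mathcal{S}$ is a full triangulated subcategory.

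A $\mathbf{T}|_{\mathcal{S}}$-module is a pair $(X,h_X)$ with $X\in\mathcal{S}$ and $h_X\colon TX\to X$ a morphism in $\mathcal{S}$ satisfying the module axioms. Since $\mathcal{S}$ is full, this is literally an object of $\mathcal{S}^{\mathbf{T}}$, and the morphisms agree as well. Hence $\mathcal{S}^{\mathbf{T}}=\mathbf{T}\text{-}\mathrm{Mod}_{\mathcal{S}}$ as categories.

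The triangulations also agree. In both categories a triangle is distinguished exactly when its underlying triangle is distinguished in $\mathcal{S}$, equivalently in $\mathcal{B}$. So the triangulated structure on $\mathcal{S}^{\mathbf{T}}$ inherited from $\mathbf{T}\text{-}\mathrm{Mod}_{\mathcal{B}}$ is the canonical one on $\mathbf{T}\text{-}\mathrm{Mod}_{\mathcal{S}}$.

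It remains to see that this is a separable extension of $\mathcal{S}$ in the sense of Definition~\ref{df1}. The adjunction $(F^{\mathbf{T}},G^{\mathbf{T}})$ restricts to an adjunction between $\mathcal{S}$ and $\mathcal{S}^{\mathbf{T}}$: $F^{\mathbf{T}}(\mathcal{S})\subseteq\mathcal{S}^{\mathbf{T}}$ because $T(\mathcal{S})\subseteq\mathcal{S}$, and hom-sets are computed in the ambient categories. Both restricted functors are exact by the description of triangles above. By Remark~\ref{remsep}(2), the restricted $G^{\mathbf{T}}$ is separable because the restricted monad is separable.

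The only point needing care is that the restricted structure on $\mathcal{S}^{\mathbf{T}}$ really is triangulated, and not merely pre-triangulated. This is automatic here, since a full triangulated subcategory of a triangulated category is triangulated.
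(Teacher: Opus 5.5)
Your proof is correct and follows the same route as the paper's much terser argument. You use three facts: $\mathcal{S}^{\mathbf{T}}=(G^{\mathbf{T}})^{-1}(\mathcal{S})$ is a preimage under the exact forgetful functor; direct sums in $\mathbf{T}\text{-}\mathrm{Mod}_{\mathcal{B}}$ are computed on underlying objects, which is what the paper's appeal to the construction of coproducts amounts to; and when $T(\mathcal{S})\subseteq\mathcal{S}$ the separable monad restricts to $\mathcal{S}$. Beyond the paper, you also check that the inherited triangulation on $\mathcal{S}^{\mathbf{T}}$ is the canonical one on $\mathbf{T}\text{-}\mathrm{Mod}_{\mathcal{S}}$ and is genuinely triangulated, which the paper leaves implicit.
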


\begin{proof}
By definition, if $\mathcal{S}$ is a triangulated subcategory, it easily follows that $\mathcal{S}^{\mathbf{T}}$ inherits a triangulated structure in $\mathbf{T}\text{-}\mathrm{Mod}_{\mathcal{B}}$. If, moreover, $\mathcal{S}$ is thick, the explicit construction of coproducts in $\mathbf{T}\text{-}\mathrm{Mod}_{\mathcal{B}}$ (see the proof of Lemma~\ref{coproducts}) ensures that $\mathcal{S}^{\mathbf{T}}$ is thick. Finally, if $T(\mathcal{S}) \subseteq \mathcal{S}$, the separable monad on $\mathcal{B}$ restricts to a separable monad on $\mathcal{S}$, yielding the second conclusion.
\end{proof}

The following theorem is the main result of this section. It establishes that a $t$-structure on $\mathcal{B}$ lifts to $\mathbf{T}\text{-}\mathrm{Mod}_{\mathcal{B}}$ under a natural exactness assumption.

\begin{thm}\label{tot}\rm
Assume that $\mathcal{B}$ satisfies Hypothesis~\ref{ph}. Let $(\mathcal{B}^{\le 0},\mathcal{B}^{\ge 0})$ be a $t$-structure on $\mathcal{B}$. If $T$ is right $t$-exact with respect to $(\mathcal{B}^{\le 0},\mathcal{B}^{\ge 0})$, then $\left (\mathbf{T}\text{-}\mathrm{Mod}_{\mathcal{B}^{\le 0}},(\mathcal{B}^{\ge 0})^{\mathbf{T}}\right)$ is a $t$-structure on $\mathbf{T}\text{-}\mathrm{Mod}_{\mathcal{B}}$.
\end{thm}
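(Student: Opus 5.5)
The plan is to check the three axioms of Definition~\ref{tstructure} for the pair $\left(\mathbf{T}\text{-}\mathrm{Mod}_{\mathcal{B}^{\le 0}},(\mathcal{B}^{\ge 0})^{\mathbf{T}}\right)$ directly, using the canonical triangulated structure of Hypothesis~\ref{ph} and Theorem~\ref{balmer}. Here $\mathbf{T}\text{-}\mathrm{Mod}_{\mathcal{B}^{\le 0}}$ is read as the full subcategory of modules whose underlying object lies in $\mathcal{B}^{\le 0}$, that is, $(\mathcal{B}^{\le 0})^{\mathbf{T}}$; right $t$-exactness of $T$ makes this reading consistent.

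Axiom (1) holds because the suspension on $\mathbf{T}\text{-}\mathrm{Mod}_{\mathcal{B}}$ is computed on underlying objects: $G^{\mathbf{T}}$ is exact and $\mathbf{T}$ is stable. Hence $\Sigma^{-1}(\mathcal{S}^{\mathbf{T}})=(\Sigma^{-1}\mathcal{S})^{\mathbf{T}}$, and the required inclusions are inherited from $\mathcal{B}$. Axiom (2) holds because $G^{\mathbf{T}}$ is faithful: $\mathrm{Hom}_{\mathbf{T}\text{-}\mathrm{Mod}}((X,h_X),(Y,h_Y))\subseteq \mathrm{Hom}_{\mathcal{B}}(X,Y)$, and the right-hand side vanishes when $X\in\mathcal{B}^{\le 0}$ and $Y\in\mathcal{B}^{\ge 1}$.

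The substantive step is axiom (3). Let $(B,h_B)$ be a $\mathbf{T}$-module, and take the truncation triangle $X\xrightarrow{i}B\to Y\to\Sigma X$ in $\mathcal{B}$ with $X\in\mathcal{B}^{\le 0}$ and $Y\in\mathcal{B}^{\ge 1}$. I will equip $X$ with a module structure making $i$ a module map. The tool is the standard universal property of the truncation. For any $Z\in\mathcal{B}^{\le 0}$, composition with $i$ gives a bijection $\mathrm{Hom}(Z,X)\cong\mathrm{Hom}(Z,B)$, since $\mathrm{Hom}(Z,Y)=0=\mathrm{Hom}(Z,\Sigma^{-1}Y)$.

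By right $t$-exactness, $T(X)\in\mathcal{B}^{\le 0}$. So the map $h_B\circ T(i)\colon T(X)\to B$ factors uniquely as $i\circ h_X$ for some $h_X\colon T(X)\to X$. This equation is exactly the statement that $i$ is a morphism of modules, once $(X,h_X)$ is known to be a module.

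The module axioms then follow from the uniqueness part of the bijection. For the unit, $X\in\mathcal{B}^{\le 0}$, so it suffices to compare after composing with $i$:
\[
i\circ h_X\circ\theta_X=h_B\circ T(i)\circ\theta_X=h_B\circ\theta_B\circ i=i .
\]
For associativity, $T^2(X)\in\mathcal{B}^{\le 0}$, again by right $t$-exactness applied twice. So it suffices to check
\[
i\circ h_X\circ T(h_X)=i\circ h_X\circ\mu_X
\]
after composing with $i$. Both sides reduce, via naturality of $\mu$ and the module axioms for $h_B$, to $h_B\circ T(h_B)\circ T^2(i)=h_B\circ\mu_B\circ T^2(i)$. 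This verification is the main point of care, and the reason right $t$-exactness is needed also for $T^2$.

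Finally, since $\mathbf{T}\text{-}\mathrm{Mod}_{\mathcal{B}}$ is triangulated, the morphism $i\colon(X,h_X)\to(B,h_B)$ completes to a distinguished triangle
\[
(X,h_X)\to(B,h_B)\to(Y',h_{Y'})\to\Sigma(X,h_X)
\]
in $\mathbf{T}\text{-}\mathrm{Mod}_{\mathcal{B}}$. By Theorem~\ref{balmer}, its image under $G^{\mathbf{T}}$ is distinguished in $\mathcal{B}$ and has first map $i$. Hence $Y'\cong Y\in\mathcal{B}^{\ge 1}$, and $(Y',h_{Y'})\in\Sigma^{-1}(\mathcal{B}^{\ge 0})^{\mathbf{T}}$, as required. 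Both subcategories are closed under isomorphism because isomorphisms of modules are isomorphisms of underlying objects. This completes the plan for all three axioms.
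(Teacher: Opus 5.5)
Your argument is correct. It differs from the paper's mainly in how you obtain the third vertex of the truncation triangle.

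\textbf{The paper's route.} The paper applies $T$ to the truncation triangle $X\to B\to Y\to\Sigma X$. It uses $\mathrm{Hom}_{\mathcal{B}}(T(X),Y)=0=\mathrm{Hom}_{\mathcal{B}}(T(X),\Sigma^{-1}Y)$ to extend $h_B$ to a morphism of triangles, which produces $h_X$ and $h_Y$ simultaneously. It then checks the unit and associativity axioms for both maps via the uniqueness clause of \cite[Proposition~1.4.5]{derived}. For associativity it stacks the diagram twice, which is where $\mathrm{Hom}_{\mathcal{B}}(T^2(X),\Sigma^{-1}Y)=0$ enters. Finally it concludes that the lifted triangle is distinguished, because all three maps (including the connecting map) are module maps and the underlying triangle is distinguished.

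\textbf{Your route.} You put a module structure only on $X$. You use the truncation bijection $\mathrm{Hom}(Z,X)\cong\mathrm{Hom}(Z,B)$ for $Z\in\mathcal{B}^{\le 0}$, which is the same Hom-vanishing in another guise. You then let the triangulated structure on $\mathbf{T}\text{-}\mathrm{Mod}_{\mathcal{B}}$ produce the third term as a cone of $i$, and identify it with $Y$ up to isomorphism in $\mathcal{B}$.

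\textbf{What each buys.} Your route skips the verification of the module axioms for $h_Y$. It also skips checking that the connecting map $Y\to\Sigma X$ is compatible with the module structures. The cost is that you invoke the existence of cones (TR1) in $\mathbf{T}\text{-}\mathrm{Mod}_{\mathcal{B}}$, not just the detection of distinguished triangles through $G^{\mathbf{T}}$. The paper's route shows slightly more: the given truncation triangle of $B$ in $\mathcal{B}$ lifts on the nose, with $Y$ itself carrying a compatible module structure.
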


\begin{proof}
By Theorem~\ref{balmer} and Hypothesis~\ref{ph}, the Eilenberg--Moore category $\mathbf{T}\text{-}\mathrm{Mod}_{\mathcal{B}}$ is a triangulated category, and $F^{\mathbf{T}}$ and $G^{\mathbf{T}}$ are both exact functors. By Lemma~\ref{stab}, we have $\mathbf{T}\text{-}\mathrm{Mod}_{\mathcal{B}^{\le 0}}=(\mathcal{B}^{\le 0})^{\mathbf{T}}$. We first show that the pair $\left (\mathbf{T}\text{-}\mathrm{Mod}_{\mathcal{B}^{\le 0}},(\mathcal{B}^{\ge 0})^{\mathbf{T}}\right )$ satisfies conditions (1) and (2) of Definition~\ref{tstructure}.

Since $\mathcal{B}^{\le 0}\subseteq \mathcal{B}^{\le 1}$, we have $\mathbf{T}\text{-}\mathrm{Mod}_{\mathcal{B}^{\le 0}} \subseteq \mathbf{T}\text{-}\mathrm{Mod}_{\mathcal{B}^{\le 1}}=\Sigma^{-1}\mathbf{T}\text{-}\mathrm{Mod}_{\mathcal{B}^{\le 0}}$. Similarly, we have $\Sigma^{-1}(\mathcal{B}^{\ge 0})^{\mathbf{T}}=(\mathcal{B}^{\ge 1})^{\mathbf{T}}\subseteq (\mathcal{B}^{\ge 0})^{\mathbf{T}}$. Let $(X,h_{X})\in \mathbf{T}\text{-}\mathrm{Mod}_{\mathcal{B}^{\le 0}}$ and $(Y,h_{Y})\in \Sigma^{-1}(\mathcal{B}^{\ge 0})^{\mathbf{T}}$. Then $X \in \mathcal{B}^{\le 0}$ and $Y \in \mathcal{B}^{\ge 1}$. It is clear that $\mathrm{Hom}_{\mathbf{T}\text{-}\mathrm{Mod}_{\mathcal{B}}}\left ((X,h_{X}),(Y,h_{Y})\right)=0$. Thus, conditions (1) and (2) hold.

Next, we verify condition (3) in Definition~\ref{tstructure}. Let $(B,h_{B})\in \mathbf{T}\text{-}\mathrm{Mod}_{\mathcal{B}}$. Since $\mathcal{B}$ is equipped with a $t$-structure, there exists a distinguished triangle
\[
X\overset{f}\longrightarrow B \overset{g}{\longrightarrow}Y\overset{h}{\longrightarrow} \Sigma X,
\]
in $\mathcal{B}$ with $X\in \mathcal{B}^{\le 0}$ and $Y\in \mathcal{B}^{\ge 1}$. Because $T$ is right $t$-exact with respect to $(\mathcal{B}^{\le 0},\mathcal{B}^{\ge 0})$, we have $T(X)\in \mathcal{B}^{\le 0}$ and hence $\mathrm{Hom}_{\mathcal{B}}(T(X),Y) =0$. Thus, there exist morphisms $h_X$ and $h_Y$ such that the following diagram commutes:
\[
\xymatrix{
	T(X) \ar[r]^{T(f)}\ar[d]^{h_{X}} & T(B) \ar[d]^{h_{B}} \ar[r]^{T(g)} & T(Y) \ar[d]^{h_{Y}} \ar[r]^{T(h)} & \Sigma T(X) \ar[d]^{\Sigma(h_{X})}  \\
    X \ar[r]^{f} & B \ar[r]^{g} & Y \ar[r]^{h} & \Sigma X,
}
\]
where both rows are distinguished triangles because $T$ is exact. We claim that this diagram yields the required distinguished triangle for condition (3). To see this, we only need to show that $(X,h_{X})$ and $(Y,h_{Y})$ are well-defined $\mathbf{T}$-modules.

By applying the exact functor $T$ to the diagram above, we obtain the following commutative diagram:
\[
\xymatrix{
	T^{2}(X) \ar[r]^{T^{2}(f)}\ar[d]^{T(h_{X})} & T^{2}(B) \ar[d]^{T(h_{B})} \ar[r]^{T^{2}(g)} & T^{2}(Y) \ar[d]^{T(h_{Y})} \ar[r]^{T^{2}(h)} & \Sigma T^2(X) \ar[d]^{\Sigma T(h_{X})}  \\
	T(X) \ar[r]^{T(f)}\ar[d]^{h_{X}} & T(B) \ar[d]^{h_{B}} \ar[r]^{T(g)} & T(Y) \ar[d]^{h_{Y}} \ar[r]^{T(h)} & \Sigma T(X) \ar[d]^{\Sigma(h_{X})}  \\
    X \ar[r]^{f} & B \ar[r]^{g} & Y \ar[r]^{h} & \Sigma X.
}
\]
Since $T$ is right $t$-exact, it is obvious that $\mathrm{Hom}_{\mathcal{B}}(T^{2}(X),\Sigma^{-1} Y)=0$. By \cite[Proposition~1.4.5]{derived}, $h_{X}\circ T(h_{X})$ and $h_{Y}\circ T(h_{Y})$ are the unique morphisms such that the left and right outer squares are commutative, respectively. We then have the sequence of equalities:
\begin{align*}
h_{B}\circ T(h_{B}) \circ T^{2}(f) &= h_{B}\circ \mu_{B}\circ T^{2}(f) \\
&= h_{B}\circ T(f)\circ \mu_{X} \\
&= f\circ h_{X}\circ \mu_{X}.
\end{align*}
By the uniqueness of $h_{X}\circ T(h_{X})$, we obtain $h_{X}\circ T(h_{X})=h_{X}\circ \mu_{X}$. A similar argument yields $h_{Y}\circ T(h_{Y})=h_{Y}\circ \mu_{Y}$.

Consider the following commutative diagram, where the upper morphisms are induced by the natural transformation $\theta$:
\[
\xymatrix{
	X \ar[r]^{f}\ar[d]^{\theta_{X}} & B \ar[d]^{\theta_{B}} \ar[r]^{g} & Y \ar[d]^{\theta_{Y}} \ar[r]^{h} & \Sigma X \ar[d]^{\Sigma(\theta _{X})}  \\
	T(X) \ar[r]^{T(f)}\ar[d]^{h_{X}} & T(B) \ar[d]^{h_{B}} \ar[r]^{T(g)} & T(Y) \ar[d]^{h_{Y}} \ar[r]^{T(h)} & \Sigma T(X) \ar[d]^{\Sigma(h_{X})}  \\
	X \ar[r]^{f} & B \ar[r]^{g} & Y \ar[r]^{h} & \Sigma X.
}
\]
Again, by \cite[Proposition~1.4.5]{derived}, $h_{X}\circ \theta _{X}$ and $h_{Y}\circ \theta _{Y}$ are the unique morphisms such that the left and right outer squares are commutative, respectively. Since $h_{B}\circ \theta_{B}=\mathrm{Id}_{B}$, we must have $h_{X}\circ \theta_{X}=\mathrm{Id}_{X}$ and $h_{Y}\circ \theta_{Y}=\mathrm{Id}_{Y}$.

We have thus shown that $(X,h_{X})$ and $(Y,h_{Y})$ are both $\mathbf{T}$-modules. Consequently, there exists a distinguished triangle
\[
(X,h_{X})\overset{f}\longrightarrow (B,h_{B}) \overset{g}{\longrightarrow}(Y,h_{Y})\overset{h}{\longrightarrow} \Sigma (X,h_{X}),
\]
in $\mathbf{T}\text{-}\mathrm{Mod}_{\mathcal{B}}$, where $(X,h_{X})\in \mathbf{T}\text{-}\mathrm{Mod}_{\mathcal{B}^{\le 0}}$ and $(Y,h_{Y})\in \Sigma^{-1} (\mathcal{B}^{\ge 0})^{\mathbf{T}}$. Hence, $\left (\mathbf{T}\text{-}\mathrm{Mod}_{\mathcal{B}^{\le 0}},(\mathcal{B}^{\ge 0})^{\mathbf{T}}\right )$ forms a $t$-structure on $\mathbf{T}\text{-}\mathrm{Mod}_{\mathcal{B}}$.
\end{proof}

\begin{rem}\rm
\begin{enumerate}
    \item[(1)] In this setting, the forgetful functor $G^{\mathbf{T}}$ is automatically $t$-exact, and its left adjoint $F^{\mathbf{T}}$ is right $t$-exact. Furthermore, $F^{\mathbf{T}}$ is $t$-exact if and only if $T$ is $t$-exact. When this holds, we have $\mathcal{H}_{\mathbf{T}\text{-}\mathrm{Mod}_{\mathcal{B}}}=\mathbf{T}\text{-}\mathrm{Mod}_{\mathcal{H}_{\mathcal{B}}}$.
    \item[(2)] If $(\mathcal{B}^{\le 0},\mathcal{B}^{\ge 0})$ is a nondegenerate (resp., bounded, stable) $t$-structure on $\mathcal{B}$, then by Lemma~\ref{stab} and Theorem~\ref{tot}, so is $\left (\mathbf{T}\text{-}\mathrm{Mod}_{\mathcal{B}^{\le 0}},(\mathcal{B}^{\ge 0})^{\mathbf{T}}\right )$.
\end{enumerate}
\end{rem}

In the remainder of this section, we characterize the $t$-structure on $\mathbf{T}\text{-}\mathrm{Mod}_{\mathcal{B}}$ induced by a compact generator $C$ in $\mathcal{B}$. The following foundational result was proven in \cite[Theorem 4.2]{combal}.

\begin{lem}\label{compact}\rm
Assume that $\mathcal{B}$ satisfies Hypothesis~\ref{ph}, and let $\mathcal{B}$ be compactly generated by $C$. Then $\mathbf{T}\text{-}\mathrm{Mod}_{\mathcal{B}}$ is a triangulated category compactly generated by $F^{\mathbf{T}}(C)$. Moreover, if $T$ preserves compactness, then $(\mathbf{T}\text{-}\mathrm{Mod}_{\mathcal{B}})^{c} = \mathbf{T}\text{-}\mathrm{Mod}_{\mathcal{B}^{c}}$ is a separable extension of $\mathcal{B}^{c}$.
\end{lem}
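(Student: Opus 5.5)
We need to prove Lemma \ref{compact}: $\mathbf{T}\text{-}\mathrm{Mod}_{\mathcal{B}}$ is compactly generated by $F^{\mathbf{T}}(C)$, and, when $T$ preserves compactness, $(\mathbf{T}\text{-}\mathrm{Mod}_{\mathcal{B}})^{c}=\mathbf{T}\text{-}\mathrm{Mod}_{\mathcal{B}^{c}}$ is a separable extension of $\mathcal{B}^{c}$.

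First, coproducts exist in $\mathbf{T}\text{-}\mathrm{Mod}_{\mathcal{B}}$ and $G^{\mathbf{T}}$ preserves them, by Lemma~\ref{coproducts}, since $T$ is smashing. Compactness of $F^{\mathbf{T}}(C)$ then comes from the adjunction:
\[
\mathrm{Hom}(F^{\mathbf{T}}C,\textstyle\coprod M_\lambda)\cong \mathrm{Hom}_{\mathcal{B}}(C,\textstyle\coprod G^{\mathbf{T}}M_\lambda)\cong \textstyle\coprod \mathrm{Hom}_{\mathcal{B}}(C,G^{\mathbf{T}}M_\lambda)\cong \textstyle\coprod\mathrm{Hom}(F^{\mathbf{T}}C,M_\lambda).
\]
For generation, suppose $\mathrm{Hom}(F^{\mathbf{T}}C,\Sigma^n M)=0$ for all $n$. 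Exactness of $G^{\mathbf{T}}$ turns this into $\mathrm{Hom}_{\mathcal{B}}(C,\Sigma^n G^{\mathbf{T}}M)=0$ for all $n$, so $G^{\mathbf{T}}M=0$. Since $G^{\mathbf{T}}$ is faithful, $M=0$.

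For the second statement, fix $(X,h_X)$ with $X\in\mathcal{B}^c$. The coproduct-preservation computation above, applied to the forgetful functor, does not directly show that $(X,h_X)$ is compact. Instead I use separability. The section $\varpi$ gives that the counit $h_X\colon F^{\mathbf{T}}G^{\mathbf{T}}(X,h_X)\to(X,h_X)$ is split by a morphism of modules, so $(X,h_X)$ is a direct summand of $F^{\mathbf{T}}(X)$. By the adjunction argument above with $C$ replaced by $X$, $F^{\mathbf{T}}(X)$ is compact. A summand of a compact object is compact, so $(X,h_X)$ is compact.

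Conversely, let $M$ be compact. I must show $G^{\mathbf{T}}M$ is compact, and this is the step I expect to be the main obstacle. The plan is to use that $G^{\mathbf{T}}$ has a coproduct-preserving right adjoint, namely $F^{\mathbf{T}}$ itself. The intended justification is the separable (Frobenius-type) structure: $\mathrm{Hom}(G^{\mathbf{T}}M,B)\cong\mathrm{Hom}(M,F^{\mathbf{T}}B)$, with the module map $M\to TG^{\mathbf{T}}M$ built from $\varpi$. This construction is exactly where the hypotheses need care. If a general biadjointness is unavailable, I will argue via thick closure instead. Since $\mathbf{T}\text{-}\mathrm{Mod}_{\mathcal{B}}$ is compactly generated by $F^{\mathbf{T}}(C)$, Neeman's theorem gives that every compact object lies in $\mathrm{thick}(F^{\mathbf{T}}C)$. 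Then $G^{\mathbf{T}}M$ lies in $\mathrm{thick}(TC)\subseteq\mathcal{B}^c$, because $T$ preserves compactness and $G^{\mathbf{T}}$ is exact and preserves summands. This gives the equality $(\mathbf{T}\text{-}\mathrm{Mod}_{\mathcal{B}})^{c}=(\mathcal{B}^c)^{\mathbf{T}}$.

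Finally, since $T(\mathcal{B}^c)\subseteq\mathcal{B}^c$ and $\mathcal{B}^c$ is thick, Lemma~\ref{stab} yields $(\mathcal{B}^c)^{\mathbf{T}}=\mathbf{T}\text{-}\mathrm{Mod}_{\mathcal{B}^c}$, a separable extension of $\mathcal{B}^{c}$.
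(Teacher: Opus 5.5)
Your proof is correct. The paper gives no argument here and simply cites \cite[Theorem~4.2]{combal}, so your proposal is a self-contained replacement for that citation. It uses only:
\begin{enumerate}
\item[(1)] Lemma~\ref{coproducts}, so that $G^{\mathbf{T}}$ preserves coproducts and hence $F^{\mathbf{T}}$ sends compacts to compacts by adjunction;
\item[(2)] conservativity of $G^{\mathbf{T}}$, for generation;
\item[(3)] the natural section of the counit from Remark~\ref{remsep}(2), which exhibits each module as a retract of $F^{\mathbf{T}}$ of its underlying object;
\item[(4)] Neeman's theorem $(\mathbf{T}\text{-}\mathrm{Mod}_{\mathcal{B}})^{c}=\mathrm{thick}(F^{\mathbf{T}}C)$, which applies because Hypothesis~\ref{ph} makes $\mathbf{T}\text{-}\mathrm{Mod}_{\mathcal{B}}$ a genuine triangulated category with coproducts.
\end{enumerate}
What this buys over the citation is seeing exactly where each hypothesis enters:
\begin{enumerate}
\item[(a)] smashing is needed for compactness of $F^{\mathbf{T}}C$;
\item[(b)] separability is needed only for $(\mathcal{B}^{c})^{\mathbf{T}}\subseteq(\mathbf{T}\text{-}\mathrm{Mod}_{\mathcal{B}})^{c}$, which holds even without $T$ preserving compactness;
\item[(c)] ``$T$ preserves compactness'' is needed only for the reverse inclusion, via $G^{\mathbf{T}}(\mathrm{thick}(F^{\mathbf{T}}C))\subseteq\mathrm{thick}(TC)\subseteq\mathcal{B}^{c}$.
\end{enumerate}

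One correction: discard the first plan for the reverse inclusion rather than presenting it as the intended route. A separable monad need not be Frobenius, so in general there is no isomorphism $\mathrm{Hom}(G^{\mathbf{T}}M,B)\cong\mathrm{Hom}(M,F^{\mathbf{T}}B)$. For a smashing Bousfield localization, such an isomorphism would make $T$ also right adjoint to the inclusion $\mathrm{Im}(T)\to\mathcal{B}$. Taking $B\in\mathrm{Ker}(T)$, this would force $\mathrm{Hom}(\mathrm{Im}(T),\mathrm{Ker}(T))=0$. That fails for rationalization on $\mathbf{D}(\mathbb{Z})$, since $\mathrm{Hom}(\mathbb{Q},\mathbb{Q}/\mathbb{Z})\neq 0$. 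The thick-closure argument is the actual proof, and it is complete as you wrote it.
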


The following proposition demonstrates that the $t$-structure on $\mathbf{T}\text{-}\mathrm{Mod}_{\mathcal{B}}$ generated by the compact generator $F^{\mathbf{T}}(C)$ is exactly the one induced by the compactly generated $t$-structure associated with $C$ on $\mathcal{B}$.

\begin{prop}\label{comt}\rm
Assume that $\mathcal{B}$ satisfies Hypothesis~\ref{ph}, and let $\mathcal{B}$ be compactly generated by $C$. If $T$ is right $t$-exact with respect to $(\mathcal{B}^{\le 0}_{C},\mathcal{B}^{\ge 0}_{C})$ (the $t$-structure generated by $C$), then
\[
\left((\mathbf{T}\text{-}\mathrm{Mod}_{\mathcal{B}})^{\le 0}_{F^{\mathbf{T}}(C)},(\mathbf{T}\text{-}\mathrm{Mod}_{\mathcal{B}})^{\ge 0}_{F^{\mathbf{T}}(C)}\right) = \left (\mathbf{T}\text{-}\mathrm{Mod}_{\mathcal{B}^{\le 0}_{C}},(\mathcal{B}^{\ge 0}_{C})^{\mathbf{T}}\right ).
\]
\end{prop}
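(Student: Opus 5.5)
Since both pairs are $t$-structures on $\mathbf{T}\text{-}\mathrm{Mod}_{\mathcal{B}}$ (the left one by \cite[Theorem A.1]{Construction} applied to the compact generator $F^{\mathbf{T}}(C)$ from Lemma~\ref{compact}, the right one by Theorem~\ref{tot}), and a $t$-structure is determined by its aisle (the coaisle is the right orthogonal $\Sigma$-shifted, i.e. $\mathcal{A}^{\ge 1}=(\mathcal{A}^{\le 0})^{\perp}$), it suffices to prove equality of the coaisles, which is the cleaner side. I plan to show
\[
(\mathbf{T}\text{-}\mathrm{Mod}_{\mathcal{B}})^{\ge 0}_{F^{\mathbf{T}}(C)}=F^{\mathbf{T}}(C)(-\infty,-1]^{\perp}=(\mathcal{B}^{\ge 0}_{C})^{\mathbf{T}}.
\]

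The key step is adjunction. For a $\mathbf{T}$-module $(Y,h_Y)$ and any $i\ge 1$, since $F^{\mathbf{T}}$ commutes with $\Sigma$,
\[
\mathrm{Hom}_{\mathbf{T}\text{-}\mathrm{Mod}_{\mathcal{B}}}\bigl(\Sigma^{i}F^{\mathbf{T}}(C),(Y,h_Y)\bigr)\cong \mathrm{Hom}_{\mathcal{B}}\bigl(\Sigma^{i}C,G^{\mathbf{T}}(Y,h_Y)\bigr)=\mathrm{Hom}_{\mathcal{B}}(\Sigma^{i}C,Y).
\]
Hence $(Y,h_Y)$ lies in $F^{\mathbf{T}}(C)(-\infty,-1]^{\perp}$ iff $Y\in C(-\infty,-1]^{\perp}=\mathcal{B}^{\ge 0}_{C}$, i.e. iff $(Y,h_Y)\in(\mathcal{B}^{\ge 0}_{C})^{\mathbf{T}}$. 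This gives equality of coaisles, and since aisles are recovered as ${}^{\perp}$ of $\Sigma^{-1}$ of coaisles, the aisles agree too: $(\mathbf{T}\text{-}\mathrm{Mod}_{\mathcal{B}})^{\le 0}_{F^{\mathbf{T}}(C)}=\mathbf{T}\text{-}\mathrm{Mod}_{\mathcal{B}^{\le 0}_{C}}$. Note that the right $t$-exactness hypothesis enters only to ensure, via Theorem~\ref{tot}, that the right-hand pair is a $t$-structure at all; the identification itself is formal.

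As a sanity check (and an alternative direct argument for the aisles), I would also verify the inclusion $\overline{\langle F^{\mathbf{T}}(C)\rangle}^{(-\infty,0]}\subseteq \mathbf{T}\text{-}\mathrm{Mod}_{\mathcal{B}^{\le 0}_C}$: the right side is closed under coproducts (Lemma~\ref{coproducts}, $G^{\mathbf{T}}$ preserves them and $\mathcal{B}^{\le 0}_C$ is closed under coproducts), extensions, summands and $\Sigma$, and contains $\Sigma^{i}F^{\mathbf{T}}(C)$ for $i\ge0$ because $T(\Sigma^iC)\in\mathcal{B}^{\le 0}_C$ by right $t$-exactness. The reverse inclusion is where a direct approach would be the main obstacle (writing an arbitrary module with underlying object in the aisle as built from $F^{\mathbf{T}}(C)$); the orthogonality argument above sidesteps it, so the only point requiring care is the standard fact that a $t$-structure's aisle is determined by its coaisle, which I will cite from condition (2)–(3) of Definition~\ref{tstructure}.
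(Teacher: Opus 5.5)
Your proposal is correct and follows the paper's proof. The paper likewise gets the lifted pair from Theorem~\ref{tot} and identifies the coaisles through the adjunction isomorphism $\mathrm{Hom}(F^{\mathbf{T}}(C)(-\infty,-1],(Y,h_Y))\cong\mathrm{Hom}_{\mathcal{B}}(C(-\infty,-1],Y)$, leaving implicit the standard fact you spell out (equal coaisles force equal aisles); your extra check of the aisle inclusion is harmless but not needed.
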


\begin{proof}
Since $T$ is right $t$-exact, Theorem~\ref{tot} guarantees that $\left (\mathbf{T}\text{-}\mathrm{Mod}_{\mathcal{B}^{\le 0}_{C}},(\mathcal{B}^{\ge 0}_{C})^{\mathbf{T}}\right )$ is a $t$-structure in $\mathbf{T}\text{-}\mathrm{Mod}_{\mathcal{B}}$. It remains to show that $(\mathcal{B}^{\ge 0}_{C})^{\mathbf{T}} = (\mathbf{T}\text{-}\mathrm{Mod}_{\mathcal{B}})^{\ge 0}_{F^{\mathbf{T}}(C)}$. For any $(Y,h_{Y})\in \mathbf{T}\text{-}\mathrm{Mod}_{\mathcal{B}}$, consider the natural isomorphism induced by the adjoint pair $(F^{\mathbf{T}},G^{\mathbf{T}})$:
\[
\mathrm{Hom}_{\mathbf{T}\text{-}\mathrm{Mod}_{\mathcal{B}}}(F^{\mathbf{T}}(C)(-\infty, -1], (Y,h_{Y})) \cong \mathrm{Hom}_{\mathcal{B}}(C(-\infty, -1],Y).
\]
By the construction of compactly generated $t$-structures, this implies that $(Y,h_{Y})\in (\mathbf{T}\text{-}\mathrm{Mod}_{\mathcal{B}})^{\ge 0}_{F^{\mathbf{T}}(C)}$ if and only if $Y\in \mathcal{B}^{\ge 0}_{C}$, which is precisely the condition that $(Y,h_{Y})\in (\mathcal{B}^{\ge 0}_{C})^{\mathbf{T}}$.
\end{proof}

\begin{rem}\label{rem310}\rm
Let $(\mathcal{B}^{\le 0},\mathcal{B}^{\ge 0})$ be a $t$-structure on $\mathcal{B}$ in the preferred equivalence class. Then there exists a natural number $n$ such that $\mathcal{B}^{\ge n}_{C}\subseteq \mathcal{B}^{\ge 0}\subseteq \mathcal{B}^{\ge -n}_{C}$. If $T$ is right $t$-exact with respect to $(\mathcal{B}^{\le 0},\mathcal{B}^{\ge 0})$, then by Theorem~\ref{tot} and Proposition~\ref{comt}, $\left (\mathbf{T}\text{-}\mathrm{Mod}_{\mathcal{B}^{\le 0}},(\mathcal{B}^{\ge 0})^{\mathbf{T}}\right )$ is a $t$-structure on $\mathbf{T}\text{-}\mathrm{Mod}_{\mathcal{B}}$ satisfying
\[
(\mathbf{T}\text{-}\mathrm{Mod}_{\mathcal{B}})^{\ge n}_{F^{\mathbf{T}}(C)} = (\mathcal{B}^{\ge n}_{C})^{\mathbf{T}} \subseteq (\mathcal{B}^{\ge 0})^{\mathbf{T}} \subseteq (\mathcal{B}^{\ge -n}_{C})^{\mathbf{T}} = (\mathbf{T}\text{-}\mathrm{Mod}_{\mathcal{B}})^{\ge -n}_{F^{\mathbf{T}}(C)}.
\]
Hence the lifted $t$-structure $\left (\mathbf{T}\text{-}\mathrm{Mod}_{\mathcal{B}^{\le 0}},(\mathcal{B}^{\ge 0})^{\mathbf{T}}\right )$ is also a $t$-structure in the preferred equivalence class of $\mathbf{T}\text{-}\mathrm{Mod}_{\mathcal{B}}$.
\end{rem}

\begin{prop}\label{frobenius_exact}\rm
Let $\mathcal{B}$ be a compactly generated triangulated category with a compact generator $C$, and let $\mathbf{T}$ be a smashing exact monad. Suppose that the following two conditions hold:
\begin{enumerate}
    \item[(1)] $T$ preserves compactness;
    \item[(2)] There exists an integer $m \ge 1$ such that $T^2 \cong T^{\oplus m}$.
\end{enumerate}
Then there exists a $t$-structure $(\mathcal{B}^{\le 0}, \mathcal{B}^{\ge 0})$ in the preferred equivalence class on $\mathcal{B}$ such that $T$ is right $t$-exact with respect to $(\mathcal{B}^{\le 0}, \mathcal{B}^{\ge 0})$.
\end{prop}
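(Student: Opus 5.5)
The plan is to take the compactly generated $t$-structure generated by the compact object
\[
D:=C\oplus T(C)\oplus T^{2}(C)\oplus\cdots
\]
truncated suitably, and show that $T$ is right $t$-exact for it. Since $T^2\cong T^{\oplus m}$, the orbit $\{T^k(C)\}_{k\ge 0}$ is, up to finite sums, controlled by $C$ and $T(C)$. Indeed, $T^k(C)$ is a direct sum of copies of $T(C)$ for every $k\ge 1$, by induction from $T^{k}=T^{k-2}T^2\cong (T^{k-1})^{\oplus m}$. Put $D:=C\oplus T(C)$. By condition (1), $D$ is compact. It is a compact generator, since it contains $C$ as a summand.

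The candidate $t$-structure is
\[
(\mathcal{B}^{\le 0},\mathcal{B}^{\ge 0}):=(\mathcal{B}^{\le 0}_{D},\mathcal{B}^{\ge 0}_{D}).
\]
Two things must be checked.

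\emph{Step 1: it lies in the preferred equivalence class.} $C$ and $D$ are both compact generators. By Neeman's observation (\cite[Proposition~1.18 ff.]{compactge}), the $t$-structures generated by any two compact generators are equivalent. Concretely, $D\in\langle C\rangle^{[-N,N]}$ and $C\in\langle D\rangle^{[-N,N]}$ for some $N$, because each is finitely built from the other in the thick subcategory $\mathcal{B}^c$. This gives $\mathcal{B}^{\le -N}_C\subseteq\mathcal{B}^{\le 0}_D\subseteq\mathcal{B}^{\le N}_C$.

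\emph{Step 2: right $t$-exactness.} Recall that $\mathcal{B}^{\le 0}_D=\overline{\langle D\rangle}^{(-\infty,0]}$ is the smallest subcategory containing $\Sigma^{i}D$ for $i\ge 0$ that is closed under coproducts, extensions and summands. Since $T$ is exact, smashing and additive, the class $\{X : T(X)\in\mathcal{B}^{\le 0}_D\}$ is closed under coproducts, extensions and direct summands. It therefore suffices to show $T(\Sigma^iD)\in\mathcal{B}^{\le 0}_D$ for $i\ge 0$. Since $T\Sigma=\Sigma T$, this reduces to $T(D)\in\mathcal{B}^{\le 0}_D$. Now
\[
T(D)=T(C)\oplus T^2(C)\cong T(C)^{\oplus(1+m)},
\]
which lies in $\mathrm{Add}(D)\subseteq\mathcal{B}^{\le 0}_D$.

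The main obstacle is Step 2, specifically the reduction from generators to all of $\overline{\langle D\rangle}^{(-\infty,0]}$. This needs the description of $\mathcal{B}^{\le 0}_D$ as a closure under coproducts, extensions and summands, together with the closure properties of the preimage class. Coproducts and extensions are handled directly by smashing and exactness. Summands follow because $T$ is additive and $\mathcal{B}^{\le 0}_D$, being an aisle, is closed under summands. Everything else is formal. Note that condition (2) is used only through the decomposition $T^2(C)\cong T(C)^{\oplus m}$ as objects; no compatibility of the isomorphism with $\mu$ is needed.
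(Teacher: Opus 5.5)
Your argument is correct and is essentially the paper's proof: the paper also takes $\widetilde{C}=C\oplus T(C)$, places its $t$-structure in the preferred equivalence class via Neeman's result that $t$-structures generated by different compact generators are equivalent (\cite[Remark~1.19]{compactge}), and uses $T(\widetilde{C})\cong T(C)^{\oplus(m+1)}$ together with exactness and the smashing property to get $T(\mathcal{B}^{\le 0}_{\widetilde{C}})\subseteq\mathcal{B}^{\le 0}_{\widetilde{C}}$. The only slip is cosmetic: $T(C)^{\oplus(1+m)}$ lies in $\mathrm{smd}(\mathrm{Add}(D))$ rather than in $\mathrm{Add}(D)$, which is harmless because the aisle is closed under direct summands.
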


\begin{proof}
Let $(\mathcal{B}_{C}^{\le 0}, \mathcal{B}_{C}^{\ge 0})$ be the $t$-structure generated by the compact generator $C$. We define a new object in $\mathcal{B}$ by
\[
\widetilde{C} := C \oplus T(C).
\]
Since $C$ is compact and $T$ preserves compactness by assumption (1), $T(C)$ is also compact. It is clear that $\widetilde{C}$ is also a compact generator for $\mathcal{B}$. Let $(\mathcal{B}_{\widetilde{C}}^{\le 0}, \mathcal{B}_{\widetilde{C}}^{\ge 0})$ be the $t$-structure generated by $\widetilde{C}$, which is uniquely determined by 
\[
\mathcal{B}_{\widetilde{C}}^{\le 0} = \overline{\langle \widetilde{C} \rangle}^{(-\infty, 0]}.
\]
By \cite[Remark~1.19]{compactge}, $(\mathcal{B}_{\widetilde{C}}^{\le 0}, \mathcal{B}_{\widetilde{C}}^{\ge 0})$ belongs to the preferred equivalence class.

Next, we show that $T$ is right $t$-exact with respect to $(\mathcal{B}_{\widetilde{C}}^{\le 0}, \mathcal{B}_{\widetilde{C}}^{\ge 0})$. Applying the functor $T$ to the generator $\widetilde{C}$, we have
\[
T(\widetilde{C}) = T(C \oplus T(C)) = T(C) \oplus T^2(C).
\]
By assumption (2), $T^2 \cong T^{\oplus m}$. Therefore,
\[
T(\widetilde{C}) \cong T(C) \oplus T(C)^{\oplus m} = T(C)^{\oplus (m+1)}.
\]
Notice that $T(C)$ is a direct summand of $\widetilde{C} = C \oplus T(C)$. Hence, $T(\widetilde{C})$ is a direct summand of a finite direct sum of copies of $\widetilde{C}$. In particular, we have
\[
T(\widetilde{C}) \in \mathcal{B}_{\widetilde{C}}^{\le 0}.
\]
Because $T$ is an exact and smashing functor, we have
\[
T\left(\mathcal{B}_{\widetilde{C}}^{\le 0}\right)=T\left(\overline{\langle \widetilde{C} \rangle}^{(-\infty, 0]}\right) \subseteq \overline{\langle T(\widetilde{C}) \rangle}^{(-\infty, 0]} \subseteq \mathcal{B}_{\widetilde{C}}^{\le 0},
\]
which proves that $T$ is right $t$-exact with respect to $(\mathcal{B}_{\widetilde{C}}^{\le 0}, \mathcal{B}_{\widetilde{C}}^{\ge 0})$.
\end{proof}
Let ${\bf T}=(T,\theta,\mu)$ be a monad on $\mathcal{B}$. We say that $\mathbf{T}$ is \emph{a monad of rank $m$} if there exists an integer $m \ge 1$ such that $T^2 \cong T^{\oplus m}$.
A functor $G:\mathcal{A} \to \mathcal{B}$ is \emph{Frobenius} if there exists a functor $F:\mathcal{B}\to \mathcal{A}$ which is simultaneously a left and a right adjoint to $G$. A monad $\mathbf{T}$ on $\mathcal{B}$ is a \emph{Frobenius monad} provided the forgetful functor $G^{\mathbf{T}}$ is a Frobenius functor. 

\begin{cor}\label{fro_con}\rm
Assume that $\mathcal{B}$ satisfies Hypothesis~\ref{ph}, and let $\mathcal{B}$ be compactly generated by $C$. Suppose that $\mathbf{T}$ is a Frobenius monad of rank $m$. 
Then there exists a $t$-structure $(\mathcal{B}^{\le 0}, \mathcal{B}^{\ge 0})$ in the preferred equivalence class on $\mathcal{B}$ such that $T$ is right $t$-exact with respect to $(\mathcal{B}^{\le 0}, \mathcal{B}^{\ge 0})$.
\end{cor}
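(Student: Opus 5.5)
The plan is to deduce Corollary~\ref{fro_con} directly from Proposition~\ref{frobenius_exact}. We need to check its hypotheses. The monad is smashing and exact by Hypothesis~\ref{ph}, and condition (2) is exactly the rank $m$ assumption. So the only thing left is condition (1): $T$ preserves compact objects.

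Since $\mathbf{T}$ is Frobenius, the forgetful functor $G^{\mathbf{T}}$ has a functor $F'$ that is both a left and a right adjoint. By uniqueness of adjoints, $F' \cong F^{\mathbf{T}}$, so $F^{\mathbf{T}}$ is also right adjoint to $G^{\mathbf{T}}$. Now recall the standard fact that a left adjoint whose right adjoint preserves coproducts sends compact objects to compact objects.
\begin{enumerate}
    \item[(a)] $G^{\mathbf{T}}$ preserves coproducts by Lemma~\ref{coproducts}, since $T$ is smashing. Hence its left adjoint $F^{\mathbf{T}}$ preserves compactness: for compact $C$ and a family $\{M_\lambda\}$ of $\mathbf{T}$-modules,
    \[
    \mathrm{Hom}(F^{\mathbf{T}}C,\textstyle\coprod M_\lambda)\cong \mathrm{Hom}(C,\coprod G^{\mathbf{T}}M_\lambda)\cong \coprod \mathrm{Hom}(C,G^{\mathbf{T}}M_\lambda)\cong \coprod\mathrm{Hom}(F^{\mathbf{T}}C,M_\lambda).
    \]
    \item[(b)] $F^{\mathbf{T}}$ preserves coproducts, being a left adjoint. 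So $G^{\mathbf{T}}$, as the left adjoint of $F^{\mathbf{T}}$, preserves compactness by the same computation.
\end{enumerate}
Combining (a) and (b), $T = G^{\mathbf{T}}F^{\mathbf{T}}$ preserves compact objects, which is condition (1).

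One should check that the adjunction identifications are compatible with the canonical maps into coproducts. This is standard, because the isomorphisms are natural and coproducts in $\mathbf{T}\text{-}\mathrm{Mod}_{\mathcal{B}}$ are computed in $\mathcal{B}$ (Lemma~\ref{coproducts}). Compactness in $\mathbf{T}\text{-}\mathrm{Mod}_{\mathcal{B}}$ makes sense since it has coproducts and a triangulated structure (Hypothesis~\ref{ph}, Theorem~\ref{balmer}).

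Proposition~\ref{frobenius_exact} then gives the required $t$-structure in the preferred equivalence class, namely the one generated by $C\oplus T(C)$, with $T$ right $t$-exact. The only substantive step is identifying $F^{\mathbf{T}}$ as a right adjoint of $G^{\mathbf{T}}$. This follows from the definition of a Frobenius monad together with uniqueness of left adjoints; no separability is needed for it.
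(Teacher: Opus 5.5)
Your proof is correct and follows the same route as the paper. You identify $F^{\mathbf{T}}$ as a two-sided adjoint of $G^{\mathbf{T}}$, deduce that $T=G^{\mathbf{T}}F^{\mathbf{T}}$ preserves compactness, and then invoke Proposition~\ref{frobenius_exact}; the only difference is that you prove the compactness step by hand (the adjunction argument applied to $F^{\mathbf{T}}$ and to $G^{\mathbf{T}}$), whereas the paper cites \cite[Lemma~5.4.1]{localizationT} for it.
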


\begin{proof}
Since $\mathbf{T}$ is an exact Frobenius monad, the forgetful functor $G^{\mathbf{T}}$ is an exact Frobenius functor. This implies that $F^{\mathbf{T}}$ is simultaneously a left and a right adjoint to $G^{\mathbf{T}}$. 

We know from Lemma~\ref{compact} that $\mathbf{T}\text{-}\mathrm{Mod}_{\mathcal{B}}$ is a compactly generated triangulated category, and from Lemma~\ref{coproducts} that $G^{\mathbf{T}}$ preserves coproducts. Under these conditions, \cite[Lemma~5.4.1]{localizationT} ensures that the functor $T=G^{\mathbf{T}}F^{\mathbf{T}}$ preserves compactness. 

Therefore, all assumptions of Proposition~\ref{frobenius_exact} are satisfied. Hence, there exists a $t$-structure $(\mathcal{B}^{\le 0}, \mathcal{B}^{\ge 0})$ in the preferred equivalence class on $\mathcal{B}$ such that $T$ is right $t$-exact with respect to $(\mathcal{B}^{\le 0}, \mathcal{B}^{\ge 0})$.
\end{proof}
\section{Approximable triangulated categories}\label{Section 4}

We can now state our main result of this section, which provides a criterion ensuring that the Eilenberg--Moore category $\mathbf{T}\text{-}\mathrm{Mod}_{\mathcal{B}}$ is (weakly) approximable.

\begin{thm}\label{appro}\rm
Suppose $\mathcal{B}$ is an approximable (or weakly approximable) triangulated category satisfying Hypothesis~\ref{ph}. Let $(\mathcal{B}^{\le 0},\mathcal{B}^{\ge 0})$ be a $t$-structure in the preferred equivalence class. If $T$ is right $t$-exact with respect to the $t$-structure $(\mathcal{B}^{\le 0},\mathcal{B}^{\ge 0})$, then $\mathbf{T}\text{-}\mathrm{Mod}_{\mathcal{B}}$ is an approximable (or weakly approximable) triangulated category.
\end{thm}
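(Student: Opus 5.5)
We will verify Definition~\ref{approximable} for $\mathbf{T}\text{-}\mathrm{Mod}_{\mathcal{B}}$ using the compact generator $F^{\mathbf{T}}(C)$ from Lemma~\ref{compact} and the lifted $t$-structure $\left(\mathbf{T}\text{-}\mathrm{Mod}_{\mathcal{B}^{\le 0}},(\mathcal{B}^{\ge 0})^{\mathbf{T}}\right)$ from Theorem~\ref{tot}. Since $\mathcal{B}$ is (weakly) approximable, the remarks after Definition~\ref{approximable} show that $C$ and $(\mathcal{B}^{\le 0},\mathcal{B}^{\ge 0})$ satisfy conditions (1)--(3) for some integer $n>0$. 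We aim to prove the same conditions in $\mathbf{T}\text{-}\mathrm{Mod}_{\mathcal{B}}$ with the same $n$.

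\emph{Condition (1).} Right $t$-exactness of $T$ gives $T(\Sigma^{n}C)\in\mathcal{B}^{\le 0}$, so $F^{\mathbf{T}}(\Sigma^nC)=(T\Sigma^nC,\mu)$ lies in $\mathbf{T}\text{-}\mathrm{Mod}_{\mathcal{B}^{\le 0}}$. For $(Y,h_Y)$ with $Y\in\mathcal{B}^{\le 0}$, adjunction gives
$\mathrm{Hom}(F^{\mathbf{T}}\Sigma^{-n}C,(Y,h_Y))\cong\mathrm{Hom}_{\mathcal{B}}(\Sigma^{-n}C,Y)=0$.

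\emph{Conditions (2) and (3).} Let $(B,h_B)$ have $B\in\mathcal{B}^{\le 0}$. Approximability of $\mathcal{B}$ gives a triangle $X\to B\to Y\to\Sigma X$ with $X\in\overline{\langle C\rangle}^{[-n,n]}_n$ (resp.\ $\overline{\langle C\rangle}^{[-n,n]}$) and $Y\in\mathcal{B}^{\le -1}$. We cannot lift this triangle directly, so we use separability. The counit $\varepsilon_{(B,h_B)}=h_B\colon F^{\mathbf{T}}G^{\mathbf{T}}(B,h_B)\to(B,h_B)$ has a section (Remark~\ref{remsep}(2)), so $(B,h_B)$ is a direct summand of $F^{\mathbf{T}}(B)$. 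Applying the exact, coproduct-preserving functor $F^{\mathbf{T}}$ to the triangle gives
$F^{\mathbf{T}}X\to F^{\mathbf{T}}B\to F^{\mathbf{T}}Y\to$, where:
\begin{itemize}
\item $F^{\mathbf{T}}X\in\overline{\langle F^{\mathbf{T}}C\rangle}^{[-n,n]}_n$ (resp.\ without the subscript), since $F^{\mathbf{T}}$ preserves coproducts, extensions, shifts and summands;
\item $F^{\mathbf{T}}Y\in\mathbf{T}\text{-}\mathrm{Mod}_{\mathcal{B}^{\le -1}}$, by right $t$-exactness of $T$.
\end{itemize}
So $F^{\mathbf{T}}B$ admits the required approximation. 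It remains to pass to the direct summand $(B,h_B)$.

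\emph{Passing to summands: the main obstacle.} First try \cite[Lemma~3.7]{glue} or a similar result in \cite{compactge}: it should say that the class of objects in $\mathcal{A}^{\le 0}$ admitting such an approximation is closed under direct summands, possibly after enlarging $n$, which is permitted. If no such citation is available, argue directly. Write $F^{\mathbf{T}}B\cong(B,h_B)\oplus(B',h')$ with $e$ the idempotent onto $(B,h_B)$; both summands lie in the aisle. Form the composite $F^{\mathbf{T}}X\to F^{\mathbf{T}}B\xrightarrow{e}(B,h_B)$ and complete it to a triangle $F^{\mathbf{T}}X\to(B,h_B)\to Z\to$. By the octahedral axiom, $Z$ is an extension of $F^{\mathbf{T}}Y$ and $\Sigma$ of the cone on the other component $F^{\mathbf{T}}X\to B'$, which lands in degrees $\le 0$ rather than $\le -1$. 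So one iteration loses control, and we expect to need Neeman's standard trick. Approximate iteratively with degree shifts, or use the characterisation of approximability in \cite[Proposition~3.6]{compactge}, where $B\in\mathcal{A}^{\le0}$ is approximated as a summand in a bounded number of steps. The summand closure of $\overline{\langle -\rangle}$ is built into the definition, so only the $\mathcal{A}^{\le -1}$ error term needs care. If this works, the conclusion is independent of the choices, and $\mathbf{T}\text{-}\mathrm{Mod}_{\mathcal{B}}$ is (weakly) approximable. Remark~\ref{rem310} confirms the lifted $t$-structure lies in the preferred class, as required.
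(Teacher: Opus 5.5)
Your choice of generator $F^{\mathbf T}(C)$ and lifted $t$-structure, your verification of condition (1), and your reduction of (2)/(3) to the composite $F^{\mathbf T}X\to F^{\mathbf T}B\xrightarrow{e}(B,h_B)$ all match the paper. The gap is in the last step. You misapply the octahedral axiom and conclude that the error term only lands in degrees $\le 0$. You then leave the proof as a conditional (``If this works'') resting on unspecified summand-closure results.

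For the composable maps $u\colon F^{\mathbf T}X\to F^{\mathbf T}B$ and $e$, the octahedral axiom gives a distinguished triangle
\[
F^{\mathbf T}Y=\mathrm{cone}(u)\longrightarrow Z=\mathrm{cone}(e\circ u)\longrightarrow \mathrm{cone}(e)\longrightarrow \Sigma F^{\mathbf T}Y .
\]
Its third term is the cone of $e$ itself, not anything built from the component $F^{\mathbf T}X\to B'$. Since $e$ is a split epimorphism with kernel your complementary summand $(B',h')$, the triangle $(B',h')\to F^{\mathbf T}B\xrightarrow{e}(B,h_B)\xrightarrow{0}\Sigma(B',h')$ shows $\mathrm{cone}(e)\cong\Sigma(B',h')$. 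You already noted that $B'\in\mathcal{B}^{\le 0}$, being a summand of $T(B)\in\mathcal{B}^{\le 0}$. Hence $\Sigma(B',h')\in\mathbf{T}\text{-}\mathrm{Mod}_{\mathcal{B}^{\le -1}}$. As $F^{\mathbf T}Y$ lies there too, and this subcategory is closed under extensions (triangles are detected by $G^{\mathbf T}$), you get $Z\in\mathbf{T}\text{-}\mathrm{Mod}_{\mathcal{B}^{\le -1}}$ directly, with the same $n$.

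So no iteration, no enlargement of $n$, and no appeal to \cite[Lemma~3.7]{glue} or \cite[Proposition~3.6]{compactge} is required. As the paper uses them, those results concern independence from the choice of generator and $t$-structure, not closure of the approximation property under direct summands.

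This observation is exactly the paper's key step. The paper calls $\mathrm{cone}(\varepsilon^{\mathbf T}_{(B,h_B)})$ by the name $(B',h_{B'})$, which is $\Sigma$ of your $B'$. It notes that this cone lies in $\mathbf{T}\text{-}\mathrm{Mod}_{\mathcal{B}^{\le -1}}$ because $\varepsilon^{\mathbf T}_{(B,h_B)}$ is a retraction, and reads off $(Y',h_{Y'})\in\mathbf{T}\text{-}\mathrm{Mod}_{\mathcal{B}^{\le -1}}$ from the octahedron. With this correction your argument is complete and coincides with the paper's.
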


\begin{proof}
Let $C$ be a compact generator for $\mathcal{B}$. According to Lemma~\ref{compact}, $\mathbf{T}\text{-}\mathrm{Mod}_{\mathcal{B}}$ is a compactly generated triangulated category with a compact generator $F^{\mathbf{T}}(C)$. By Remark~\ref{rem310}, $\left (\mathbf{T}\text{-}\mathrm{Mod}_{\mathcal{B}^{\le 0}},(\mathcal{B}^{\ge 0})^{\mathbf{T}}\right)$ is a $t$-structure on $\mathbf{T}\text{-}\mathrm{Mod}_{\mathcal{B}}$ belonging to the preferred equivalence class.

Since $\mathcal{B}$ is an approximable triangulated category, there exists an integer $n>0$ satisfying conditions (1) and (3) in Definition~\ref{approximable}; that is, $\Sigma^{n} C\in \mathcal{B}^{\le 0}$ and $\mathrm{Hom}_{\mathcal{B}}(\Sigma^{-n} C,\mathcal{B}^{\le 0})=0$. Because $T$ is right $t$-exact, we have $\Sigma^{n} F^{\mathbf{T}}(C) \in\mathbf{T}\text{-}\mathrm{Mod}_{\mathcal{B}^{\le 0}}$. Additionally, we have
\[
\mathrm{Hom}_{\mathbf{T}\text{-}\mathrm{Mod}_{\mathcal{B}}}(\Sigma^{-n} F^{\mathbf{T}}(C),\mathbf{T}\text{-}\mathrm{Mod}_{\mathcal{B}^{\le 0}})\cong \mathrm{Hom}_{\mathcal{B}}\left (\Sigma^{-n} C,G^{\mathbf{T}}\left (\mathbf{T}\text{-}\mathrm{Mod}_{\mathcal{B}^{\le 0}}\right )\right )=0.
\]

We now verify condition (3) in Definition~\ref{approximable}. For any $(B,h_{B}) \in \mathbf{T}\text{-}\mathrm{Mod}_{\mathcal{B}^{\le 0}}$, there exists a distinguished triangle
\[
X\longrightarrow B\longrightarrow Y\longrightarrow \Sigma X
\]
in $\mathcal{B}$, with $X\in \overline{\langle C \rangle}_{n}^{[-n,n]}$ and $Y\in \mathcal{B}^{\le -1}$. By the octahedral axiom, we obtain the following commutative diagram:
\[
\xymatrix{
    & \Sigma^{-1} (B',h_{B'}) \ar[d] \ar@{=}[r] & \Sigma^{-1} (B',h_{B'}) \ar[d] &\\
    F^{\mathbf{T}}(X) \ar[r] \ar@{=}[d] & F^{\mathbf{T}}(B) \ar[d]^{\varepsilon^{\mathbf{T}}_{(B,h_{B})}} \ar[r] & F^{\mathbf{T}}(Y) \ar[d]^{f} \ar[r] & \Sigma F^{\mathbf{T}}(X) \ar@{=}[d] \\
    F^{\mathbf{T}}(X) \ar[r] & (B,h_{B}) \ar[r] \ar[d] & (Y',h_{Y'}) \ar[r] \ar[d]^{g} & \Sigma F^{\mathbf{T}}(X) \\
    & (B',h_{B'}) \ar@{=}[r] & (B',h_{B'}) & 	
}
\]
Note that $\varepsilon^{\mathbf{T}}_{(B,h_{B})}=h_{B}$ is a retraction in $\mathbf{T}\text{-}\mathrm{Mod}_{\mathcal{B}}$ because $G^{\mathbf{T}}$ is separable. Thus, we have $(B',h_{B'})\in \mathbf{T}\text{-}\mathrm{Mod}_{\mathcal{B}^{\le -1}}$. It is easy to see that $F^{\mathbf{T}}(Y)\in \mathbf{T}\text{-}\mathrm{Mod}_{\mathcal{B}^{\le -1}}$, which consequently forces $(Y',h_{Y'})\in \mathbf{T}\text{-}\mathrm{Mod}_{\mathcal{B}^{\le -1}}$. Since $F^{\mathbf{T}}$ preserves coproducts, $F^{\mathbf{T}}(X) \in \overline{\langle F^{\mathbf{T}}(C) \rangle}_{n}^{[-n,n]}$. 

Hence, for any $(B,h_{B}) \in \mathbf{T}\text{-}\mathrm{Mod}_{\mathcal{B}^{\le 0}}$, there exists a distinguished triangle
\[
F^{\mathbf{T}}(X)\longrightarrow (B,h_{B})\longrightarrow (Y',h_{Y'})\longrightarrow \Sigma F^{\mathbf{T}}(X)
\]
in $\mathbf{T}\text{-}\mathrm{Mod}_{\mathcal{B}}$, with $F^{\mathbf{T}}(X)\in \overline{\langle F^{\mathbf{T}}(C) \rangle}_{n}^{[-n,n]}$ and $(Y',h_{Y'})\in \mathbf{T}\text{-}\mathrm{Mod}_{\mathcal{B}^{\le -1}}$. 

The proof for the weakly approximable case is analogous and implicit in the above argument.
\end{proof}

\begin{rem}\label{THproof}\rm
We can now complete the proof of Theorem~\ref{thm:MainA} stated in the Introduction. Let $\mathcal{A}$ be an idempotent complete separable extension of $\mathcal{B}$, and let $\mathbf{T}$ be the induced monad on $\mathcal{B}$. As pointed out in Remark~\ref{remsep}(3), the idempotent completeness of $\mathcal{A}$ guarantees that $G$ is monadic; that is, the Eilenberg--Moore comparison functor $\varphi: \mathcal{A} \xrightarrow{\sim} \mathbf{T}\text{-}\mathrm{Mod}_{\mathcal{B}}$ is an equivalence of triangulated categories. Consequently, Theorem~\ref{appro} ensures that $\mathcal{A}$ is an approximable (or weakly approximable) triangulated category.
\end{rem}

\begin{prop}\label{prop_fro}\rm 
  Suppose $\mathcal{B}$ is an approximable (or weakly approximable) triangulated category satisfying Hypothesis~\ref{ph}. If $\mathbf{T}$ is a Frobenius monad of rank $m$, then $\mathbf{T}\text{-}\mathrm{Mod}_{\mathcal{B}}$ is an approximable (or weakly approximable) triangulated category.
\end{prop}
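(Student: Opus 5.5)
The plan is to reduce Proposition~\ref{prop_fro} directly to Theorem~\ref{appro}. The only hypothesis of that theorem not already given is the existence of a $t$-structure in the preferred equivalence class of $\mathcal{B}$ for which $T$ is right $t$-exact, and Corollary~\ref{fro_con} supplies exactly this.

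First, since $\mathcal{B}$ is (weakly) approximable, it is by definition compactly generated; fix a compact generator $C$. Together with Hypothesis~\ref{ph} and the assumption that $\mathbf{T}$ is a Frobenius monad of rank $m$, all hypotheses of Corollary~\ref{fro_con} hold. Concretely, the Frobenius property makes $F^{\mathbf{T}}$ also a right adjoint of $G^{\mathbf{T}}$. Since $G^{\mathbf{T}}$ preserves coproducts (Lemma~\ref{coproducts}) and $\mathbf{T}\text{-}\mathrm{Mod}_{\mathcal{B}}$ is compactly generated (Lemma~\ref{compact}), it follows that $T=G^{\mathbf{T}}F^{\mathbf{T}}$ preserves compactness. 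The rank condition $T^2\cong T^{\oplus m}$ then lets Proposition~\ref{frobenius_exact} run with the generator $\widetilde{C}=C\oplus T(C)$. The conclusion is a $t$-structure $(\mathcal{B}^{\le 0}_{\widetilde C},\mathcal{B}^{\ge 0}_{\widetilde C})$ in the preferred equivalence class for which $T$ is right $t$-exact.

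Second, we apply Theorem~\ref{appro} with this $t$-structure. The point to check is that Theorem~\ref{appro} permits an arbitrary $t$-structure in the preferred equivalence class, and not only one witnessing Definition~\ref{approximable} for the original $C$. This is covered by the remark following Definition~\ref{approximable}: once $\mathcal{B}$ is (weakly) approximable, any compact generator and any $t$-structure in the preferred class satisfy the criteria, by \cite[Lemma~3.7]{glue} and \cite[Proposition~3.6]{compactge}. We may therefore use $C$ (or $\widetilde C$) together with $(\mathcal{B}^{\le 0}_{\widetilde C},\mathcal{B}^{\ge 0}_{\widetilde C})$ and an appropriate integer $n$. Theorem~\ref{appro} then shows that $\mathbf{T}\text{-}\mathrm{Mod}_{\mathcal{B}}$ is approximable (respectively weakly approximable).

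The main, if mild, obstacle is this compatibility of choices, namely that the $t$-structure produced by Corollary~\ref{fro_con} can legitimately be fed into the approximability data. Since the cited independence results settle it, the argument is essentially a concatenation of earlier results.
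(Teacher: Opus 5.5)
Your proposal is correct and is the paper's proof: Corollary~\ref{fro_con} supplies a $t$-structure in the preferred equivalence class for which $T$ is right $t$-exact, and Theorem~\ref{appro} then gives (weak) approximability of $\mathbf{T}\text{-}\mathrm{Mod}_{\mathcal{B}}$. The compatibility check you single out needs no extra argument. Theorem~\ref{appro} is already stated for an arbitrary $t$-structure in the preferred class, and that generality rests on the independence remark after Definition~\ref{approximable}.
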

\begin{proof}
  By Corollary~\ref{fro_con}, there exists a $t$-structure $(\mathcal{B}^{\le 0}, \mathcal{B}^{\ge 0})$ in the preferred equivalence class on $\mathcal{B}$ such that $T$ is right $t$-exact with respect to $(\mathcal{B}^{\le 0}, \mathcal{B}^{\ge 0})$. Theorem~\ref{appro} implies that $\mathbf{T}\text{-}\mathrm{Mod}_{\mathcal{B}}$ is an approximable (or weakly approximable) triangulated category.
\end{proof}
Let $\mathcal{B}$ be a triangulated category and $(T, \lambda)$ a \emph{Bousfield localization}, i.e., $T: \mathcal{B} \to \mathcal{B}$ is an exact functor and $\lambda: \mathrm{Id}_{\mathcal{B}} \to T$ is a natural transformation such that $T\lambda: T \to T^2$ is an isomorphism and $T\lambda = \lambda T$. 
By \cite[Example~6.3]{septri}, the associated monad $\mathbf{T} = (T, \lambda, (T\lambda)^{-1})$ is a stably separable monad, and the category $\mathbf{T}\text{-}\mathrm{Mod}_{\mathcal{B}}$ coincides with $\mathrm{Ker}(T)^\perp = \mathrm{Im}(T)$. 
The forgetful functor $G^{\mathbf{T}}: \mathrm{Ker}(T)^\perp \to \mathcal{B}$ is the fully faithful inclusion, with its left adjoint given by $F^{\mathbf{T}} = T: \mathcal{B} \to \mathrm{Ker}(T)^\perp$. 
Since $T^2 \cong T$, it follows that $\mathbf{T}$ is a monad of rank 1. 
Finally, we call $T$ \emph{smashing} if it preserves coproducts, which is equivalent to the monad $\mathbf{T}$ itself being smashing. The following result is an immediate consequence of Theorem~\ref{appro}.

\begin{cor}\label{loc}\rm
Let $\mathcal{B}$ be an approximable (or weakly approximable) triangulated category and $T:\mathcal{B}\to \mathcal{B}$ an exact and smashing localization functor. Suppose that $T$ is right $t$-exact with respect to a $t$-structure $(\mathcal{B}^{\le 0},\mathcal{B}^{\ge 0})$ in the preferred equivalence class. Then $\mathrm{Im}(T)$ is an approximable (or weakly approximable) triangulated subcategory of $\mathcal{B}$.
\end{cor}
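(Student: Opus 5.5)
The plan is to reduce Corollary~\ref{loc} directly to Theorem~\ref{appro}, applied to the monad $\mathbf{T}=(T,\lambda,(T\lambda)^{-1})$ associated with the Bousfield localization. By \cite[Example~6.3]{septri}, as recalled just before the statement, this monad is stably separable and exact. The triangulated structure on $\mathbf{T}\text{-}\mathrm{Mod}_{\mathcal{B}}$ is the one it inherits as the full triangulated subcategory $\mathrm{Ker}(T)^{\perp}=\mathrm{Im}(T)$ of $\mathcal{B}$. This structure agrees with the canonical one of Theorem~\ref{balmer}, because in both a triangle is distinguished exactly when its image under the forgetful functor (here the inclusion) is distinguished in $\mathcal{B}$. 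Since $T$ is smashing, $\mathbf{T}$ is smashing, so every requirement of Hypothesis~\ref{ph} is met.

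Before invoking the theorem, I will make sure the identification $\mathbf{T}\text{-}\mathrm{Mod}_{\mathcal{B}}\simeq \mathrm{Im}(T)$ is an equivalence of triangulated categories. A module $(B,h_B)$ has $h_B$ inverse to $\lambda_B$: we have $h_B\lambda_B=\mathrm{Id}$, and $\lambda_B$ is invertible for $B$ in the essential image of $T$ because $T\lambda=\lambda T$ is an isomorphism. Hence the structure map is unique, the forgetful functor is fully faithful, and its image is $\{B : \lambda_B \text{ invertible}\}=\mathrm{Im}(T)$. I will also note that $\mathrm{Im}(T)$ is closed under coproducts in $\mathcal{B}$, because $T$ preserves them. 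Coproducts in $\mathrm{Im}(T)$ are therefore computed in $\mathcal{B}$, which is consistent with Lemma~\ref{coproducts}.

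With these checks in place, the hypotheses of Theorem~\ref{appro} hold verbatim: $\mathcal{B}$ is (weakly) approximable, and $T$ is right $t$-exact for a $t$-structure in the preferred equivalence class. The theorem then says $\mathbf{T}\text{-}\mathrm{Mod}_{\mathcal{B}}$ is (weakly) approximable, and by the equivalence above so is $\mathrm{Im}(T)$. Concretely, the compact generator is $T(C)$ and the $t$-structure is $(\mathrm{Im}(T)\cap\mathcal{B}^{\le 0},\,\mathrm{Im}(T)\cap\mathcal{B}^{\ge 0})$.

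The only genuinely delicate point is the compatibility of triangulated structures: one must confirm that the canonical structure of Theorem~\ref{balmer} and Remark~\ref{remsep}(1) is the subcategory structure. I expect this to follow immediately from the characterization of distinguished triangles in Theorem~\ref{balmer}, so the whole corollary should be a short verification.
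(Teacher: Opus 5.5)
Your proposal is correct and follows the paper's route: the paper treats Corollary~\ref{loc} as an immediate consequence of Theorem~\ref{appro}, applied to the monad $\mathbf{T}=(T,\lambda,(T\lambda)^{-1})$ via the identification $\mathbf{T}\text{-}\mathrm{Mod}_{\mathcal{B}}=\mathrm{Ker}(T)^{\perp}=\mathrm{Im}(T)$ from \cite[Example~6.3]{septri}. One small tightening: to see that $\lambda_B$ is invertible for every module $(B,h_B)$ you need not first know $B\in\mathrm{Im}(T)$, because naturality gives $\lambda_B h_B=T(h_B)\lambda_{TB}=T(h_B)T(\lambda_B)=T(h_B\lambda_B)=\mathrm{Id}_{TB}$.
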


\begin{rem}\rm 
  Let $\mathcal{B}$ be an approximable (or weakly approximable) triangulated category and $T$ an exact and smashing localization functor. If $T$ further preserves compactness, then by Proposition \ref{frobenius_exact}, there exists a $t$-structure $(\mathcal{B}^{\le 0}, \mathcal{B}^{\ge 0})$ in the preferred equivalence class on $\mathcal{B}$ such that $T$ is right $t$-exact with respect to $(\mathcal{B}^{\le 0}, \mathcal{B}^{\ge 0})$. Hence, Corollary~\ref{loc} ensures that $\mathrm{Im}(T)$ is an approximable (or weakly approximable) triangulated subcategory of $\mathcal{B}$.
\end{rem}
It is well known that a recollement \cite{BBD} of triangulated categories induces Bousfield localizations. Let $\mathcal{B}, \mathcal{B}_{l}$, and $\mathcal{B}_{r}$ be triangulated categories. Assume that $\mathcal{B}$ admits a recollement relative to $\mathcal{B}_{l}$ and $\mathcal{B}_{r}$ of the following form:
\[
\xymatrix{
	\mathcal{B}_{l} \ar[rr]^{I}  && \mathcal{B} \ar[rr]^{Q} \ar@/^1.1pc/[ll]_{I_{r}}\ar@/_1.1pc/[ll]_{I_{l}}  && \mathcal{B}_{r} \ar@/_1.1pc/[ll]_{Q_{l}} \ar@/^1.1pc/[ll]_{Q_{r}} 
} \qquad (\mathbf{R})
\]
Then $II_{l}$ and $Q_{r}Q$ are both Bousfield localization functors. If $\mathcal{B}$ admits coproducts, then $II_{l}$ is automatically smashing. We note that $Q_{r}Q$ is smashing if and only if $Q_{r}$ preserves coproducts \cite[Proposition 5.5.1]{localizationT}.

Given $t$-structures $(\mathcal{B}_{l}^{\le 0},\mathcal{B}_{l}^{\ge 0})$ and $(\mathcal{B}_{r}^{\le 0},\mathcal{B}_{r}^{\ge 0})$ on $\mathcal{B}_{l}$ and $\mathcal{B}_{r}$, respectively, the \emph{glued $t$-structure} on $\mathcal{B}$ is $(\mathcal{B}_{gl}^{\le 0},\mathcal{B}_{gl}^{\ge 0})$, where
\[
\mathcal{B}_{gl}^{\le 0}:=\left \{ X\in \mathcal{B} \mid I_{l}(X)\in \mathcal{B}^{\le 0}_{l} \quad \text{and} \quad Q(X)\in \mathcal{B}^{\le 0}_{r} \right \} 
\]
and
\[
\mathcal{B}_{gl}^{\ge 0}:=\left \{ X\in \mathcal{B} \mid I_{r}(X)\in \mathcal{B}^{\ge 0}_{l} \quad \text{and} \quad Q(X)\in \mathcal{B}^{\ge 0}_{r} \right \}.
\]
We point out that the functors $I_{l}$ and $Q_{l}$ are right $t$-exact, $I$ and $Q$ are $t$-exact, and $I_{r}$ and $Q_{r}$ are left $t$-exact \cite{BBD}.

By \cite[Theorem 5.1]{glue}, under mild hypotheses, if $\mathcal{B}_{l}$ and $\mathcal{B}_{r}$ are approximable triangulated categories, then so is $\mathcal{B}$. In the remainder of this section, we investigate a converse-type result to their theorem. We first need the following lemma, which is essentially proved in \cite[Lemma 3.11]{glue}. For the reader's convenience, we provide a proof here.

\begin{lem}\label{rem_lem2}\rm
Suppose that we have a recollement as in $(\mathbf{R})$. Let $C$ be a compact generator for $\mathcal{B}$ and $C_{r}$ a compact object for $\mathcal{B}_{r}$. If $Q(C)\in \overline{\langle C _r \rangle}^{(-\infty,0]}$, then we can find a compact object $\widetilde{C}$ in $\mathcal{B}$ such that the $t$-structure generated by $\widetilde{C}$ is equal to the gluing of the $t$-structure on $\mathcal{B}_{l}$ generated by the compact object $I_{l}(\widetilde{C})$ and the $t$-structure on $\mathcal{B}_{r}$ generated by $C_r$.
\end{lem}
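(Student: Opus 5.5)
The plan is to take $\widetilde{C}:=C\oplus Q_{l}(C_r)$ and compare the aisles $\overline{\langle \widetilde{C}\rangle}^{(-\infty,0]}$ and $\mathcal{B}_{gl}^{\le 0}$. Here $\mathcal{B}_{gl}^{\le 0}$ is the aisle glued from $\overline{\langle I_l(\widetilde{C})\rangle}^{(-\infty,0]}$ on $\mathcal{B}_l$ and $\overline{\langle C_r\rangle}^{(-\infty,0]}$ on $\mathcal{B}_r$. Since both pairs are $t$-structures, equality of the aisles suffices. Before comparing, I would record the standard facts about the recollement. $Q_l$ preserves compactness, because its right adjoint $Q$ has a right adjoint $Q_r$ and so preserves coproducts. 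Likewise $I_l$ preserves compactness, because $I$ preserves coproducts. Hence $\widetilde{C}$ is compact and $I_l(\widetilde{C})$ is compact in $\mathcal{B}_l$. We also have $QQ_l\cong\mathrm{Id}$ and $I_lQ_l=0$, and every $X\in\mathcal{B}$ sits in a triangle $Q_lQX\to X\to II_lX\to$.

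For the inclusion $\overline{\langle \widetilde{C}\rangle}^{(-\infty,0]}\subseteq\mathcal{B}_{gl}^{\le 0}$: the functors $I_l$ and $Q$ are exact left adjoints, so the class $\{X : I_lX\in\overline{\langle I_l\widetilde{C}\rangle}^{(-\infty,0]},\ QX\in\overline{\langle C_r\rangle}^{(-\infty,0]}\}$ is closed under coproducts, extensions, positive suspensions and summands. It therefore suffices to check that $\widetilde{C}$ lies in it. The condition $I_l\widetilde{C}\in\overline{\langle I_l\widetilde{C}\rangle}^{(-\infty,0]}$ is trivial. For the other, $Q\widetilde{C}=QC\oplus QQ_lC_r\cong QC\oplus C_r$, and this lies in $\overline{\langle C_r\rangle}^{(-\infty,0]}$ by the hypothesis $Q(C)\in\overline{\langle C_r\rangle}^{(-\infty,0]}$. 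This is the only place the hypothesis enters.

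For the reverse inclusion, take $X\in\mathcal{B}_{gl}^{\le 0}$ and use the triangle $Q_lQX\to X\to II_lX\to$. Since the aisle is closed under extensions, it suffices to show that both outer terms lie in $\overline{\langle\widetilde{C}\rangle}^{(-\infty,0]}$. First, $QX\in\overline{\langle C_r\rangle}^{(-\infty,0]}$, and $Q_l$ preserves coproducts, extensions and summands. So $Q_lQX\in\overline{\langle Q_lC_r\rangle}^{(-\infty,0]}\subseteq\overline{\langle\widetilde{C}\rangle}^{(-\infty,0]}$. Second, $I_lX\in\overline{\langle I_l\widetilde{C}\rangle}^{(-\infty,0]}$, and $I$ is exact and coproduct-preserving, so $II_lX\in\overline{\langle II_l\widetilde{C}\rangle}^{(-\infty,0]}$. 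It therefore suffices to show $II_l\widetilde{C}\in\overline{\langle\widetilde{C}\rangle}^{(-\infty,0]}$. For this, apply the same triangle to $\widetilde{C}$: $Q_lQ\widetilde{C}\to\widetilde{C}\to II_l\widetilde{C}\to\Sigma Q_lQ\widetilde{C}$. Then $II_l\widetilde{C}\in \overline{\langle\widetilde{C}\rangle}^{(-\infty,0]} * \Sigma\, Q_lQ\widetilde{C}$. Moreover $Q_lQ\widetilde{C}\in\overline{\langle Q_lC_r\rangle}^{(-\infty,0]}$ by the previous paragraph, and this class is stable under $\Sigma$. So both pieces lie in the aisle, and hence so does $II_l\widetilde{C}$.

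The step I expect to be the main obstacle is exactly this last one. Ensuring $II_l\widetilde{C}$ lands in the aisle requires two things: the shifted term $\Sigma Q_lQ\widetilde{C}$ must stay in nonpositive degrees, and this is why $Q_l(C_r)$ has to be added as a summand of $\widetilde{C}$; and $QC$ must be controlled by $C_r$, which is where the hypothesis is needed. Checking the bookkeeping of directions (aisles are closed under $\Sigma$, not $\Sigma^{-1}$) is the main thing to verify carefully. Finally, compact generation of $\mathcal{B}$ by $\widetilde{C}$ is not needed for the statement, but it holds anyway since $C$ is a summand of $\widetilde{C}$.
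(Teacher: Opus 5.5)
Your proposal is correct and follows the paper's proof essentially step for step. It makes the same choice $\widetilde{C}=C\oplus Q_l(C_r)$, proves $\overline{\langle \widetilde{C}\rangle}^{(-\infty,0]}\subseteq\mathcal{B}_{gl}^{\le 0}$ by checking $\widetilde{C}$ and using that $I_l$ and $Q$ preserve coproducts, and obtains the reverse inclusion from the triangle $Q_lQ(-)\to(-)\to II_l(-)\to$ applied first to $\widetilde{C}$ and then to an arbitrary $X\in\mathcal{B}_{gl}^{\le 0}$, with slightly more explicit justification than the paper gives for the compactness of $I_l(\widetilde{C})$ and for $II_l\widetilde{C}$ lying in the aisle.
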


\begin{proof}
We note that $Q_{l}(C_{r})$ is compact in $\mathcal{B}$ because the functor $Q_{l}$ preserves compactness. Let 
\[
\widetilde{C}:=C \oplus Q_{l}(C_{r}),
\]
which is clearly a compact object in $\mathcal{B}$. We write $(\mathcal{B}_{l}^{\le 0},\mathcal{B}_{l}^{\ge 0})$ for the $t$-structure on $\mathcal{B}_{l}$ generated by $I_{l}(\widetilde{C})$, $(\mathcal{B}_{r}^{\le 0},\mathcal{B}_{r}^{\ge 0})$ for the $t$-structure on $\mathcal{B}_{r}$ generated by $C_r$, and $(\mathcal{B}_{gl}^{\le 0},\mathcal{B}_{gl}^{\ge 0})$ for the $t$-structure on $\mathcal{B}$ obtained by gluing $(\mathcal{B}_{l}^{\le 0},\mathcal{B}_{l}^{\ge 0})$ and $(\mathcal{B}_{r}^{\le 0},\mathcal{B}_{r}^{\ge 0})$. We claim that $(\mathcal{B}_{gl}^{\le 0},\mathcal{B}_{gl}^{\ge 0})$ is equal to the $t$-structure $(\mathcal{B}_{\widetilde{C}}^{\le 0},\mathcal{B}_{\widetilde{C}}^{\ge 0})$ generated by $\widetilde{C}$. To establish this, we only need to show
\[
\mathcal{B}_{\widetilde{C}}^{\le 0}=\mathcal{B}_{gl}^{\le 0}.
\]
We note that $I_{l}(\widetilde{C})\in \mathcal{B}_{l}^{\le 0}$ and $Q(\widetilde{C})=Q(C)\oplus C_r\in \mathcal{B}_{r}^{\le 0}$. Using the fact that the functors $I_{l}$ and $Q$ preserve coproducts, we deduce that $I_{l}(\mathcal{B}_{\widetilde{C}}^{\le 0})\subseteq \mathcal{B}_{l}^{\le 0}$ and $Q(\mathcal{B}_{\widetilde{C}}^{\le 0}) \subseteq \mathcal{B}_{r}^{\le 0}$. Thus, $\mathcal{B}_{\widetilde{C}}^{\le 0}\subseteq \mathcal{B}_{gl}^{\le 0}$.

On the other hand, we have $Q_{l}(C_r)\in \mathcal{B}_{\widetilde{C}}^{\le 0}$, and thus $Q_{l}(\mathcal{B}_{r}^{\le 0})\subseteq \mathcal{B}_{\widetilde{C}}^{\le 0}$. Since $II_{l}(\widetilde{C})$ fits into a distinguished triangle
\[
Q_{l}Q(\widetilde{C})\longrightarrow \widetilde{C}\longrightarrow II_{l}(\widetilde{C})\longrightarrow \Sigma Q_{l}Q(\widetilde{C}),
\]
with $Q_{l}Q(\widetilde{C})=Q_{l}Q(C)\oplus Q_{l}(C_r)$ and $\widetilde{C}$ both belonging to $\mathcal{B}_{\widetilde{C}}^{\le 0}$, it follows that $II_{l}(\widetilde{C}) \in \mathcal{B}_{\widetilde{C}}^{\le 0}$, and thus $I(\mathcal{B}_{l}^{\le 0})\subseteq \mathcal{B}_{\widetilde{C}}^{\le 0}$. Now, let $Y\in \mathcal{B}_{gl}^{\le 0}$ and consider the distinguished triangle
\[
Q_{l}Q(Y)\longrightarrow Y\longrightarrow II_{l}(Y)\longrightarrow \Sigma Q_{l}Q(Y).
\]
We have $Q_{l}Q(Y)\in Q_{l}(\mathcal{B}_{r}^{\le 0})\subseteq  \mathcal{B}_{\widetilde{C}}^{\le 0}$, and $II_{l}(Y) \in I(\mathcal{B}_{l}^{\le 0})\subseteq \mathcal{B}_{\widetilde{C}}^{\le 0}$. We conclude that $\mathcal{B}_{\widetilde{C}}^{\le 0}=\mathcal{B}_{gl}^{\le 0}$.
\end{proof}

We now obtain the following result, which serves as a converse-type statement to \cite[Theorem 5.1]{glue}.

\begin{prop}\label{recollement}\rm
Let $\mathcal{B}$ be an approximable (or weakly approximable) triangulated category with a compact generator $C$, admitting a recollement as in $(\mathbf{R})$. If $Q(C)$ is compact in $\mathcal{B}_r$, then $\mathcal{B}_{l}$ and $\mathcal{B}_{r}$ are both approximable (or weakly approximable) triangulated categories.
\end{prop}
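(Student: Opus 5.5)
The plan is to realize both $\mathcal{B}_l$ and $\mathcal{B}_r$ as Eilenberg--Moore categories of smashing Bousfield localizations on $\mathcal{B}$ and then invoke Corollary~\ref{loc}. By \cite[Example~6.3]{septri}, $II_l$ and $Q_rQ$ are Bousfield localization functors. Their images are $\mathrm{Im}(II_l)\simeq\mathcal{B}_l$, via $I$ fully faithful, and $\mathrm{Im}(Q_rQ)\simeq\mathcal{B}_r$, via $Q_r$ fully faithful. The functor $II_l$ preserves coproducts, since $I$ and $I_l$ are left adjoints. For $Q_rQ$ we would use the compactness of $Q(C)$. The right adjoint $Q$ preserves coproducts because it has the right adjoint $Q_r$, and $Q(C)$ compactly generates $\mathcal{B}_r$ because $Q$ is essentially surjective up to the recollement. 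Since $Q_l$ preserves compactness, $Q_r$ preserves coproducts whenever $\mathcal{B}_r$ is compactly generated by compacts in the image of $Q$; this gives \cite[Proposition~5.5.1]{localizationT}, i.e. $Q_rQ$ is smashing. In this argument, $Q(C)$ compact would serve as the compact generator of $\mathcal{B}_r$, and $\mathrm{Hom}(Q(C),Q_r(-))$ commutes with coproducts via $\mathrm{Hom}(Q_lQ(C),-)$ with $Q_lQ(C)$ compact.

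It then remains to find, for each localization, a $t$-structure in the preferred equivalence class of $\mathcal{B}$ for which it is right $t$-exact. Here we would use Lemma~\ref{rem_lem2} with $C_r:=Q(C)$. The hypothesis $Q(C)\in\overline{\langle C_r\rangle}^{(-\infty,0]}$ holds trivially. The lemma produces $\widetilde{C}=C\oplus Q_lQ(C)$, a compact generator of $\mathcal{B}$, whose generated $t$-structure, preferred by \cite[Remark~1.19]{compactge}, equals the glued $t$-structure $(\mathcal{B}_{gl}^{\le 0},\mathcal{B}_{gl}^{\ge 0})$. Since $I_l$ and $Q_l$ are right $t$-exact and $I$ and $Q$ are $t$-exact for the glued $t$-structure, $II_l$ is right $t$-exact.

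The main obstacle is $Q_rQ$: the functor $Q_r$ is only left $t$-exact, so a direct argument fails. I would instead show $Q_rQ(\mathcal{B}^{\le 0}_{\widetilde C})\subseteq\mathcal{B}^{\le 0}_{\widetilde C}$ by checking generators. Because $Q_rQ$ is now exact and smashing and $\mathcal{B}^{\le 0}_{\widetilde C}=\overline{\langle\widetilde C\rangle}^{(-\infty,0]}$, it suffices to place $Q_rQ(\widetilde C)$ in this aisle. If that fails, I would use the alternative route of Proposition~\ref{frobenius_exact} with $m=1$, which needs only that $Q_rQ$ preserves compactness. That holds when $Q_r$ preserves compacts, which in turn follows from $Q_r$ preserving coproducts if $Q$ preserves compacts; this may need the compact generator to be adjusted. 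Failing both, I would argue that $\mathcal{B}_r^{\le0}:=Q(\mathcal{B}_{gl}^{\le0})$ is generated by $Q(\widetilde C)\cong Q(C)^{\oplus 2}$, hence lies in the preferred class on $\mathcal{B}_r$. The approximating triangles $X\to B\to Y$ in $\mathcal{B}$ for $B=Q_r(B')$ then push forward under the coproduct-preserving $t$-exact functor $Q$, giving $QX\in\overline{\langle Q(C)\rangle}_n^{[-n,n]}$, $QY\in\mathcal{B}_r^{\le -1}$ and $QQ_rB'\cong B'$. The analogous argument with $I_l$ handles $\mathcal{B}_l$, using that $I_l$ is right $t$-exact, preserves coproducts, and satisfies $I_lI\cong\mathrm{Id}$. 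This last direct argument is the one I actually expect to carry the proof, with the condition $\mathrm{Hom}(\Sigma^{-n}Q(C),\mathcal{B}_r^{\le0})=0$ checked through the adjunction $Q_l\dashv Q$ and the compactness of $Q(C)$.
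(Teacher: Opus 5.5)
\textbf{The gap: $Q_r(B')$ need not lie in the aisle.} The problem is in the direct argument you expect to carry the proof for $\mathcal{B}_r$. You apply condition~(3) of Definition~\ref{approximable} in $\mathcal{B}$ to $B=Q_r(B')$ with $B'\in\mathcal{B}_r^{\le 0}$. That condition only applies to objects of $\mathcal{B}^{\le 0}_{\widetilde C}=\mathcal{B}^{\le 0}_{gl}$, and $Q_r$ is only left $t$-exact, which is the very obstruction you flagged one step earlier.

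To have $Q_r(B')\in\mathcal{B}^{\le 0}_{gl}$ you need $I_lQ_r(B')\in\mathcal{B}^{\le 0}_l$. Apply $I_l$ to the triangle $II_rQ_l(B')\to Q_l(B')\to Q_r(B')\to\Sigma II_rQ_l(B')$ and use $I_lQ_l=0$ and $I_lI\cong\mathrm{Id}$. This gives $I_lQ_r(B')\cong\Sigma I_rQ_l(B')$, which is controlled only by the left $t$-exact functor $I_r$.

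This genuinely fails. Take the triangular matrix dg-algebra with $R=S=k$ and $M=\Sigma^{-5}k$; then $\mathbf{D}(\Lambda)$ is approximable by \cite[Corollary~6.1]{glue}, and $Q(\Lambda)\cong S$ is compact. Here $I_l(\widetilde C)\cong k$, so $\mathcal{B}_l^{\le 0}$ is the standard aisle. But $I_lQ_r(S)\cong\Sigma M=\Sigma^{-4}k\notin\mathbf{D}(k)^{\le 0}$.

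A shift does not rescue this cheaply. Approximating $\Sigma^{c}Q_r(B')$ and applying $Q$ leaves an error term for $B'$ in $\mathcal{B}_r^{\le c-1}$, not in $\mathcal{B}_r^{\le -1}$.

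The same example kills your route (a), since $Q_rQ(\widetilde C)\notin\mathcal{B}^{\le 0}_{\widetilde C}$. Route (b) rests on a reversed implication. $Q_r$ preserving coproducts corresponds to $Q$ preserving compacts; it says nothing about $Q_r$ preserving compacts.

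\textbf{The fix: embed by $Q_l$ instead of $Q_r$.} Since $I_lQ_l=0$ and $QQ_l\cong\mathrm{Id}$, the functor $Q_l$ is right $t$-exact for the glued $t$-structure, so $Q_l(B')\in\mathcal{B}^{\le 0}_{\widetilde C}$. Approximate it by $M\to Q_l(B')\to N$ with $M\in\overline{\langle C\rangle}_n^{[-n,n]}$ and $N\in\mathcal{B}^{\le -1}_{\widetilde C}$. Applying the coproduct-preserving functor $Q$ gives $Q(M)\to B'\to Q(N)\to\Sigma Q(M)$ with $Q(M)\in\overline{\langle Q(C)\rangle}_n^{[-n,n]}$ and $Q(N)\in\mathcal{B}_r^{\le -1}$. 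This is exactly the paper's proof for $\mathcal{B}_r$, and it makes your discussion of whether $Q_rQ$ is smashing or right $t$-exact unnecessary.

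\textbf{What already works.} Your check that $\mathrm{Hom}(\Sigma^{-N}Q(C),\mathcal{B}_r^{\le 0})=0$ for large $N$ is valid. It uses $Q_l\dashv Q$, $X\cong QQ_l(X)$, and the fact that $Q_lQ(C)$ is compact and hence finitely built from $C$ in a bounded range of shifts. It is slightly more direct than the paper's two appeals to \cite[Lemma~3.5]{glue}. For $\mathcal{B}_l$, two arguments work. One is applying Corollary~\ref{loc} to the right $t$-exact smashing localization $II_l$, which is the paper's argument. The other is your direct argument with $I(B'')$ and $I_l$, which works precisely because $I$, unlike $Q_r$, is $t$-exact.
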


\begin{proof}
We present the proof for approximability; the weakly approximable case is analogous. If $Q(C)$ is compact, it is clear that $Q(C)$ is a compact generator in $\mathcal{B}_{r}$. Let $(\mathcal{B}_{r}^{\le 0},\mathcal{B}_{r}^{\ge 0})$ be the $t$-structure on $\mathcal{B}_{r}$ generated by $Q(C)$. We note that $Q_{l}Q(C)$ is a compact object in $\mathcal{B}$ because $Q_{l}$ preserves compactness. It follows that $\mathrm{Hom}_{\mathcal{B}}(\Sigma^{-m} Q_{l}Q(C),C)=0$ for $m\gg 0$ by \cite[Lemma~3.5(2)]{glue}, since we assume that $\mathcal{B}$ is approximable. Then $\mathrm{Hom}_{\mathcal{B}_{r}}(\Sigma^{-m} Q(C),Q(C))=0$ for $m\gg 0$. By \cite[Lemma~3.5(1)]{glue}, there exists an integer $k>0$ such that $\mathrm{Hom}_{\mathcal{B}_{r}}(\Sigma^{-k} Q(C),\mathcal{B}_{r}^{\le 0})=0$.

Let $\widetilde{C}=C \oplus Q_{l}Q(C)$ as in the proof of Lemma~\ref{rem_lem2}. Because $C$ is a compact generator in $\mathcal{B}$, it easily follows that $\widetilde{C}$ is also a compact generator in $\mathcal{B}$. The $t$-structure $(\mathcal{B}_{\widetilde{C}}^{\le 0},\mathcal{B}_{\widetilde{C}}^{\ge 0})$ generated by $\widetilde{C}$ fulfills the approximability criteria. Let $(\mathcal{B}_{l}^{\le 0},\mathcal{B}_{l}^{\ge 0})$ be the $t$-structure on $\mathcal{B}_{l}$ generated by $I_{l}(\widetilde{C})$. Then, by Lemma~\ref{rem_lem2}, $(\mathcal{B}_{\widetilde{C}}^{\le 0},\mathcal{B}_{\widetilde{C}}^{\ge 0})$ is exactly the gluing of $(\mathcal{B}_{l}^{\le 0},\mathcal{B}_{l}^{\ge 0})$ and $(\mathcal{B}_{r}^{\le 0},\mathcal{B}_{r}^{\ge 0})$. We see that both $Q_l$ and $Q$ are right $t$-exact; that is, $Q_{l}(\mathcal{B}_{r}^{\le 0})\subseteq \mathcal{B}_{\widetilde{C}}^{\le 0}$ and $Q(\mathcal{B}_{\widetilde{C}}^{\le 0})\subseteq \mathcal{B}_{r}^{\le 0}$.

Let $X\in \mathcal{B}_{r}^{\le 0}$. There exists an integer $n>0$ such that $Q_{l}(X)$ fits into a distinguished triangle
\[
M\longrightarrow Q_{l}(X)\longrightarrow N\longrightarrow \Sigma M
\]
in $\mathcal{B}$, where $M\in \overline{\langle C \rangle}_{n}^{[-n,n]}$ and $N\in \mathcal{B}_{\widetilde{C}}^{\le -1}$. Since $X\cong QQ_{l}(X)$, we obtain the distinguished triangle
\[
Q(M)\longrightarrow X\longrightarrow Q(N)\longrightarrow \Sigma Q(M)
\]
in $\mathcal{B}_{r}$, where $Q(M)\in \overline{\langle Q(C) \rangle}_{n}^{[-n,n]}$ and $Q(N)\in \mathcal{B}_{r}^{\le -1}$. Let $a= \max\{n,k\}$. We clearly have $\Sigma^{a} Q(C)\in \mathcal{B}_{r}^{\le 0}$, $\mathrm{Hom}_{\mathcal{B}_{r}}(\Sigma^{-a} Q(C),\mathcal{B}_{r}^{\le 0})=0$, and $Q(M)\in \overline{\langle Q(C) \rangle}_{n}^{[-n,n]}\subseteq \overline{\langle Q(C) \rangle}_{a}^{[-a,a]}$. We conclude that $\mathcal{B}_{r}$ is approximable.

For $\mathcal{B}_{l}$, we note that $I_l$ and $I$ are right $t$-exact, and hence the Bousfield localization $II_{l}$ (which is smashing) is right $t$-exact with respect to the $t$-structure $(\mathcal{B}_{\widetilde{C}}^{\le 0},\mathcal{B}_{\widetilde{C}}^{\ge 0})$. The assertion then follows immediately from Corollary~\ref{loc}.
\end{proof}

\begin{rem}\rm
In fact, given a smashing localization functor on an approximable (or weakly approximable) triangulated category, it induces a recollement of similarly approximable triangulated categories. Let $\mathcal{B}$ be an approximable (or weakly approximable) triangulated category with a compact generator $C$, and let $T:\mathcal{B} \to \mathcal{B}$ be an exact and smashing localization functor. Suppose that the inclusion $I:\mathrm{Ker}(T)\to \mathcal{B}$ admits a left adjoint $I_{l}$. According to \cite[Proposition~4.13.1]{localizationT}, $T$ induces a recollement of the following form:
\[
\xymatrix{
	\mathrm{Ker}(T) \ar[rr]^{I}  && \mathcal{B} \ar[rr]^{Q} \ar@/^1.1pc/[ll]\ar@/_1.1pc/[ll]_{}  && \mathrm{Im}(T) \ar@/_1.1pc/[ll] \ar@/^1.1pc/[ll]_{Q_r}
}
\]
By \cite[Proposition 5.5.1]{localizationT}, $Q_r$ preserves coproducts. Since $\mathcal{B}$ is compactly generated, it is well known that $Q$ preserves compactness. By Proposition~\ref{recollement}, $\mathrm{Im}(T)$ and $\mathrm{Ker}(T)$ are both approximable (or weakly approximable) triangulated categories. Thus, $T$ induces a recollement of approximable triangulated categories.
\end{rem}

Combining \cite[Theorem~5.1]{glue} and Proposition~\ref{recollement}, we obtain the following precise conclusion regarding the gluing of approximable triangulated categories.

\begin{cor}\label{recollementiff}\rm
Suppose we have a recollement as in $(\mathbf{R})$. If $\mathcal{B}$ has a compact generator $C$ such that $\mathrm{Hom}_{\mathcal{B}}(\Sigma^{-n} C,C)=0$ for $n\gg 0$, and $Q(C)$ is compact, then $\mathcal{B}$ is approximable if and only if both $\mathcal{B}_{l}$ and $\mathcal{B}_{r}$ are approximable.
\end{cor}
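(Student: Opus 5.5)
The two implications are handled separately: the forward one by Proposition~\ref{recollement}, and the converse by the gluing theorem \cite[Theorem~5.1]{glue}. The hypotheses of the corollary are arranged so that each of these results applies as stated.

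\emph{($\Rightarrow$)} Suppose $\mathcal{B}$ is approximable. The recollement $(\mathbf{R})$ is given, $C$ is a compact generator of $\mathcal{B}$, and $Q(C)$ is compact in $\mathcal{B}_r$ by assumption. Proposition~\ref{recollement} therefore applies verbatim and shows that $\mathcal{B}_l$ and $\mathcal{B}_r$ are approximable. This direction does not use the vanishing hypothesis on $\mathrm{Hom}_{\mathcal{B}}(\Sigma^{-n}C,C)$.

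\emph{($\Leftarrow$)} Suppose $\mathcal{B}_l$ and $\mathcal{B}_r$ are approximable. Here I would invoke \cite[Theorem~5.1]{glue}, so the work consists in checking that its "mild hypotheses" hold. As I recall them, they are:
\begin{enumerate}
\item[(a)] $\mathcal{B}$ is compactly generated by an object $C$ with $\mathrm{Hom}_{\mathcal{B}}(\Sigma^{-n}C,C)=0$ for $n\gg0$;
\item[(b)] the functor $Q$ (equivalently $Q_r$) respects compactness, or at least the recollement restricts well to compact objects, so that the compact generator of $\mathcal{B}_r$ can be taken to be $Q(C)$.
\end{enumerate}
Condition (a) is assumed outright. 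For (b), compactness of $Q(C)$ is assumed. Since $Q$ is essentially surjective and kills exactly $I(\mathcal{B}_l)$, the object $Q(C)$ generates $\mathcal{B}_r$: if $\mathrm{Hom}_{\mathcal{B}_r}(Q(C),\Sigma^nZ)=0$ for all $n$, then $\mathrm{Hom}_{\mathcal{B}}(C,\Sigma^nQ_rZ)=0$ for all $n$, hence $Q_rZ=0$, and so $Z\cong QQ_rZ=0$. Hence $Q(C)$ is a compact generator of $\mathcal{B}_r$. With these hypotheses in place, \cite[Theorem~5.1]{glue} yields that $\mathcal{B}$ is approximable.

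The main obstacle is bookkeeping rather than new mathematics: one has to match our hypotheses precisely to those of \cite[Theorem~5.1]{glue}. If that theorem additionally requires $Q_r$ to preserve coproducts, I would deduce this from compactness of $Q(C)$. Since $Q(C)$ is a compact generator and $\mathrm{Hom}(C,\Sigma^n Q_r(-))\cong\mathrm{Hom}(Q(C),\Sigma^n(-))$ commutes with coproducts, the comparison map $\coprod Q_rZ_i\to Q_r\coprod Z_i$ induces isomorphisms on $\mathrm{Hom}(C,\Sigma^n-)$ for all $n$, and is therefore an isomorphism. If it requires the $t$-structure on $\mathcal{B}$ to be the glued one, Lemma~\ref{rem_lem2} provides this, taking $C_r=Q(C)$ and $\widetilde{C}=C\oplus Q_lQ(C)$.
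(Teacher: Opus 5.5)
Your proposal is correct and follows the paper's route exactly: the forward direction is Proposition~\ref{recollement}, and the converse is \cite[Theorem~5.1]{glue}, whose hypotheses (a compact generator $C$ with $\mathrm{Hom}_{\mathcal{B}}(\Sigma^{-n}C,C)=0$ for $n\gg 0$, and $Q(C)$ compact) are precisely the ones assumed in the corollary. Your extra checks, that $Q(C)$ compactly generates $\mathcal{B}_r$ and that $Q_r$ preserves coproducts, are sound but are not needed beyond matching those hypotheses.
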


Let $\Lambda$ be a dg-algebra. If its derived category $\mathbf{D}(\Lambda)$ of left dg $\Lambda$-modules is approximable, then $H^{n}(\Lambda)=0$ for $n\gg 0$ by \cite[Lemma~3.5]{glue}. Conversely, if $H^{n}(\Lambda)=0$ for all $n>0$, then $\mathbf{D}(\Lambda)$ is approximable by \cite[Remark~5.3]{compactge}. However, for a general dg-algebra $\Lambda$ satisfying $H^{n}(\Lambda)=0$ for $n\gg 0$, it remains unclear when the category $\mathbf{D}(\Lambda)$ is approximable or weakly approximable.

We fix a base commutative ring $k$. Let $R$ and $S$ be dg-algebras with ${}_R M_S$ a dg-bimodule. Let
\[
\Lambda=\begin{bmatrix}
  R & M\\
  0 & S
\end{bmatrix}
\]
denote the upper triangular matrix dg-algebra with the differential
\[
d_{\Lambda}\left (\begin{bmatrix}
  r & m\\
  0 & s
\end{bmatrix}\right )=\begin{bmatrix}
  d_{R}(r) & d_{M}(m)\\
  0 & d_{S}(s)
\end{bmatrix}.
\]

Let $e_{R}=\begin{bmatrix} 1 & 0\\ 0 & 0 \end{bmatrix}$ and $e_{S}=\begin{bmatrix} 0 & 0\\ 0 & 1 \end{bmatrix}$, and define the left dg-$\Lambda$-modules
\[
B=\Lambda e_{R}=\begin{bmatrix} R\\ 0 \end{bmatrix} \quad \text{and} \quad C=\Lambda e_{S}=\begin{bmatrix} M\\ S \end{bmatrix},
\]
where $B$ has the differential $d\left (\begin{bmatrix} r\\ 0 \end{bmatrix}\right )=\begin{bmatrix} d_{R}(r)\\ 0 \end{bmatrix}$ and $C$ has the differential $d\left (\begin{bmatrix} m\\ s \end{bmatrix}\right )=\begin{bmatrix} d_{M}(m)\\ d_{S}(s) \end{bmatrix}$. 

We conclude with the following result, which provides a necessary condition for the derived category of an upper triangular matrix dg-algebra to be approximable.

\begin{cor}\rm
Let $\Lambda$ be an upper triangular matrix dg-algebra constructed from dg-algebras $R$ and $S$ as above. Suppose that ${}_R M_S$ is quasi-isomorphic to some ${}_R N_S$ where $N$ is $K$-projective as a left dg-$R$-module (this condition is automatically satisfied when $k$ is a field). If $\mathbf{D}(\Lambda)$ is approximable, then $\mathbf{D}(R)$ and $\mathbf{D}(S)$ are both approximable. If, moreover, $H^{n}(M)=0$ for $n\gg 0$, then $\mathbf{D}(\Lambda)$ is approximable if and only if $\mathbf{D}(R)$ and $\mathbf{D}(S)$ are approximable.
\end{cor}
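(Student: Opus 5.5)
We will realize $\mathbf{D}(\Lambda)$ as the middle term of a recollement with outer terms $\mathbf{D}(R)$ and $\mathbf{D}(S)$, and then apply Proposition~\ref{recollement} and Corollary~\ref{recollementiff}. The compact generator of $\mathbf{D}(\Lambda)$ is $\Lambda=B\oplus C$. Since $e_R\Lambda e_R\cong R$, $e_S\Lambda e_S\cong S$, and $e_S\Lambda e_R=0$, the idempotent $e=e_S$ gives the standard recollement.

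The left piece is $\mathbf{D}(R)\simeq \mathrm{Ker}(e\Lambda\otimes^{\mathbf L}_\Lambda -)$. It is embedded by restriction of scalars along $\Lambda\to\Lambda/\Lambda e_S\Lambda\cong R$, which is a homological epimorphism because $\Lambda e_S\Lambda=C$ is $K$-projective. The right piece is $\mathbf{D}(S)$, with $Q=e_S\Lambda\otimes^{\mathbf L}_\Lambda-=\mathbf{R}\mathrm{Hom}_\Lambda(C,-)$, $Q_l=C\otimes^{\mathbf L}_S-$, and $Q_r=\mathbf{R}\mathrm{Hom}_S(e_S\Lambda,-)$.

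The $K$-projectivity hypothesis on $M$ is used here. Replacing $M$ by $N$ gives a quasi-isomorphic triangular dg-algebra, so $\mathbf{D}(\Lambda)$ is unchanged up to equivalence. With $N$ $K$-projective over $R$, the derived tensor and Hom functors can be computed on underlying modules, and the adjoints are well defined and preserve coproducts. The main obstacle I expect is verifying carefully that this yields a genuine recollement $(\mathbf{R})$ with $\mathcal{B}_l=\mathbf{D}(R)$ and $\mathcal{B}_r=\mathbf{D}(S)$. I would cite the standard recollement for an idempotent in a dg-algebra, and check the homological epimorphism condition by the computation $R\otimes^{\mathbf L}_\Lambda R\simeq R$ using the triangle $C\to\Lambda\to B$.

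Next we check that $Q(\Lambda)$ is compact. We have $Q(\Lambda)=e_S\Lambda=e_S\Lambda e_R\oplus e_S\Lambda e_S=0\oplus S=S$, which is the free module and hence compact. Proposition~\ref{recollement} then shows that if $\mathbf{D}(\Lambda)$ is approximable, so are $\mathbf{D}(R)$ and $\mathbf{D}(S)$. This is the first assertion.

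For the equivalence we use Corollary~\ref{recollementiff}. It needs $\mathrm{Hom}(\Sigma^{-n}\Lambda,\Lambda)=H^n(\Lambda)=0$ for $n\gg0$. Since $H^n(\Lambda)=H^n(R)\oplus H^n(M)\oplus H^n(S)$, this reduces to the assumption $H^n(M)=0$ for $n\gg0$ together with the vanishing of $H^n(R)$ and $H^n(S)$ for $n\gg 0$. In the direction where $\mathbf{D}(R)$ and $\mathbf{D}(S)$ are approximable, this vanishing follows from \cite[Lemma~3.5]{glue}. In the direction where $\mathbf{D}(\Lambda)$ is approximable, the vanishing of $H^n(\Lambda)$ is already implied, so we may apply Corollary~\ref{recollementiff} in both directions, which finishes the proof.
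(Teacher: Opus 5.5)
Your proof is correct and follows the paper's route: the same $e_S$-recollement $\mathbf{D}(R)\to\mathbf{D}(\Lambda)\to\mathbf{D}(S)$ with $Q(\Lambda)\cong S$ compact, followed by Proposition~\ref{recollement} for the first assertion. The differences are minor: for the equivalence you invoke Corollary~\ref{recollementiff} after checking $H^n(\Lambda)=H^n(R)\oplus H^n(M)\oplus H^n(S)=0$ for $n\gg 0$ via \cite[Lemma~3.5]{glue}, whereas the paper cites \cite[Corollary~6.1]{glue} directly, and you sketch the homological-epimorphism check for the recollement where the paper cites \cite{Dguppertri}.
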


\begin{proof}
By \cite[Section~3]{Dguppertri} (see also \cite[Theorem~3.3]{Recollementofdg}), there is a recollement
\[
\xymatrix{
	\mathbf{D}(R) \ar[rr]^{I}  && \mathbf{D}(\Lambda) \ar[rr]^{Q} \ar@/^1.1pc/[ll]_{I_{r}}\ar@/_1.1pc/[ll]_{I_{l}}  && \mathbf{D}(S) \ar@/_1.1pc/[ll]_{Q_{l}} \ar@/^1.1pc/[ll]_{Q_{r}}
}
\]
where $Q_{l}={}_\Lambda C_{S}\otimes ^{\mathbf{L}}_{S}-$, $Q=\mathbf{R}\mathrm{Hom}_{\Lambda}({}_\Lambda C_{S},-)$, $Q_{r}=\mathbf{R}\mathrm{Hom}_{S}(Q(\Lambda),-)$, $I={}_\Lambda B_{R}\otimes^{\mathbf{L}}_{R}-$, and $I_{r}=\mathbf{R}\mathrm{Hom}_{\Lambda}({}_\Lambda B_{R},-)$. By \cite[Lemma~3.11]{Dguppertri}, we have $Q(\Lambda)\cong {}_{S}S$ in $\mathbf{D}(S)$, which is a compact object. If $\mathbf{D}(\Lambda)$ is approximable, then by Proposition~\ref{recollement}, we deduce that $\mathbf{D}(R)$ and $\mathbf{D}(S)$ are both approximable. The second assertion is a direct consequence of \cite[Corollary 6.1]{glue}.
\end{proof}

\section{The intrinsic subcategories}\label{section5}

Let $\mathcal{B}$ be a weakly approximable triangulated category. Fix a $t$-structure $(\mathcal{B}^{\le 0},\mathcal{B}^{\ge 0})$ on $\mathcal{B}$ as in Definition~\ref{approximable}. Following \cite[Definition 1.20]{compactge}, the full subcategories $\mathcal{B}^{-}_{c}$ and $\mathcal{B}^{b}_{c}$ of $\mathcal{B}$ are defined by
\[
\mathcal{B}^{-}_{c}:= \bigcap_{n >0 }(\mathcal{B}^{c}*\mathcal{B}^{\le -n}), \quad \mathcal{B}^{b}_{c}:=\mathcal{B}^{-}_{c}\cap \mathcal{B}^{b}.
\]

The category $\mathcal{B}^{-}_{c}$ can be regarded as the closure of $\mathcal{B}^{c}$ in $\mathcal{B}$; that is, its objects can be approximated by compacts to an arbitrarily high order. Equivalent $t$-structures give rise to identical subcategories $\mathcal{B}^{-}$, $\mathcal{B}^{+}$, $\mathcal{B}^{b}$, $\mathcal{B}^{-}_{c}$, and $\mathcal{B}^{b}_{c}$. Obviously, $\mathcal{B}^{c}\subseteq \mathcal{B}^{-}_{c}\subseteq \mathcal{B}^{-}$. All of these subcategories are thick \cite{compactge} and intrinsically defined, independent of the choice of $t$-structure within the preferred equivalence class. We mention that these intrinsic subcategories, with the exception of $\mathcal{B}^{-}_{c}$, were also defined and investigated in \cite{KPV} for a general triangulated category. The first three subcategories, namely $\mathcal{B}^{-}$, $\mathcal{B}^{+}$, and $\mathcal{B}^{b}$, coincide with those in \cite{KPV} by \cite[Corollary 3.10]{glue}. By Neeman's celebrated theorem \cite[Theorem 1.4]{compactge}, the subcategory $\mathcal{B}^{b}_{c}$ coincides with the corresponding one in \cite{KPV} under a finiteness hypothesis for a linear approximable triangulated category $\mathcal{B}$. In this section, we assume that $\mathcal{B}$ is a weakly approximable triangulated category satisfying Hypothesis~\ref{ph}. Let $(\mathcal{B}^{\le 0},\mathcal{B}^{\ge 0})$ be a $t$-structure on $\mathcal{B}$ in the preferred equivalence class. 

\begin{lem}\label{rb}\rm
Suppose that $T$ is right $t$-exact with respect to $(\mathcal{B}^{\le 0},\mathcal{B}^{\ge 0})$. Then $(\mathbf{T}\text{-}\mathrm{Mod}_{\mathcal{B}})^{+}=({\mathcal{B}^{+}})^{\mathbf{T}}$, $(\mathbf{T}\text{-}\mathrm{Mod}_{\mathcal{B}})^{b}=({\mathcal{B}^{b}})^{\mathbf{T}}$, and $(\mathbf{T}\text{-}\mathrm{Mod}_{\mathcal{B}})^{-}=\mathbf{T}\text{-}\mathrm{Mod}_{\mathcal{B}^{-}}$ is a separable extension of $\mathcal{B}^{-}$. If, moreover, $T$ is $t$-exact with respect to $(\mathcal{B}^{\le 0},\mathcal{B}^{\ge 0})$, then $(\mathbf{T}\text{-}\mathrm{Mod}_{\mathcal{B}})^{+}$ and $(\mathbf{T}\text{-}\mathrm{Mod}_{\mathcal{B}})^{b}$ are separable extensions of $\mathcal{B}^{+}$ and $\mathcal{B}^{b}$, respectively.
\end{lem}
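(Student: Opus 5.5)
The plan is to work throughout with the lifted $t$-structure $\left(\mathbf{T}\text{-}\mathrm{Mod}_{\mathcal{B}^{\le 0}},(\mathcal{B}^{\ge 0})^{\mathbf{T}}\right)$ of Theorem~\ref{tot}. By Remark~\ref{rem310} it lies in the preferred equivalence class of $\mathbf{T}\text{-}\mathrm{Mod}_{\mathcal{B}}$, and $\mathbf{T}\text{-}\mathrm{Mod}_{\mathcal{B}}$ is weakly approximable by Theorem~\ref{appro}. The subcategories $(\mathbf{T}\text{-}\mathrm{Mod}_{\mathcal{B}})^{-}$, $(\mathbf{T}\text{-}\mathrm{Mod}_{\mathcal{B}})^{+}$ and $(\mathbf{T}\text{-}\mathrm{Mod}_{\mathcal{B}})^{b}$ depend only on the equivalence class, so we may compute them with this lifted $t$-structure. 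Its aisle and coaisle are
\[
(\mathbf{T}\text{-}\mathrm{Mod}_{\mathcal{B}})^{\le n}=(\mathcal{B}^{\le n})^{\mathbf{T}}, \qquad (\mathbf{T}\text{-}\mathrm{Mod}_{\mathcal{B}})^{\ge -n}=(\mathcal{B}^{\ge -n})^{\mathbf{T}},
\]
which follows because $\Sigma$ on modules acts on underlying objects. Taking unions over $n>0$ gives $(\mathbf{T}\text{-}\mathrm{Mod}_{\mathcal{B}})^{-}=(\mathcal{B}^{-})^{\mathbf{T}}$ and $(\mathbf{T}\text{-}\mathrm{Mod}_{\mathcal{B}})^{+}=(\mathcal{B}^{+})^{\mathbf{T}}$. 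Intersecting these two gives $(\mathbf{T}\text{-}\mathrm{Mod}_{\mathcal{B}})^{b}=(\mathcal{B}^{b})^{\mathbf{T}}$. These are just set-theoretic identities about which underlying objects occur.

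For the separable extension claims I will use Lemma~\ref{stab}. The subcategories $\mathcal{B}^{-},\mathcal{B}^{+},\mathcal{B}^{b}$ are thick, since they are intrinsic subcategories in the sense of \cite{compactge}. So it suffices to check $T(\mathcal{S})\subseteq \mathcal{S}$ for the relevant $\mathcal{S}$.
\begin{enumerate}
\item[(i)] Right $t$-exactness gives $T(\mathcal{B}^{\le 0})\subseteq \mathcal{B}^{\le 0}$. Since $T$ commutes with $\Sigma$, this yields $T(\mathcal{B}^{\le n})\subseteq \mathcal{B}^{\le n}$ for every $n$, hence $T(\mathcal{B}^{-})\subseteq \mathcal{B}^{-}$. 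Lemma~\ref{stab} then gives $(\mathcal{B}^{-})^{\mathbf{T}}=\mathbf{T}\text{-}\mathrm{Mod}_{\mathcal{B}^{-}}$, a separable extension of $\mathcal{B}^{-}$.
\item[(ii)] If $T$ is also left $t$-exact, the same argument gives $T(\mathcal{B}^{\ge -n})\subseteq \mathcal{B}^{\ge -n}$, hence $T(\mathcal{B}^{+})\subseteq\mathcal{B}^{+}$ and $T(\mathcal{B}^{b})\subseteq \mathcal{B}^{b}$. Lemma~\ref{stab} then applies to both.
\end{enumerate}
Here Lemma~\ref{stab} is understood as restricting the monad, with its separability datum $\varpi$, to $\mathcal{S}$. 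Together with the identification of the previous paragraph, this gives all the claimed statements.

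I expect little real difficulty. The one point needing care is justifying that the intrinsic subcategories of $\mathbf{T}\text{-}\mathrm{Mod}_{\mathcal{B}}$ may be computed with the lifted $t$-structure; this is exactly where Remark~\ref{rem310} and the independence of $(-)^{-},(-)^{+},(-)^{b}$ from the representative of the preferred class are used. A smaller point is checking that the restricted Eilenberg--Moore category inherits the canonical triangulated structure. This holds because the restricted monad is still stably separable and exact, and triangles in $\mathcal{S}^{\mathbf{T}}$ are detected on underlying objects as in Theorem~\ref{balmer}.
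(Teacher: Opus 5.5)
Your proposal is correct and follows essentially the same route as the paper. You compute $(-)^{-},(-)^{+},(-)^{b}$ with the lifted $t$-structure of Remark~\ref{rem310} by taking unions and intersections of conditions on underlying objects. You then invoke Lemma~\ref{stab} once $T(\mathcal{B}^{-})\subseteq\mathcal{B}^{-}$ is checked (and, under $t$-exactness, $T(\mathcal{B}^{+})\subseteq\mathcal{B}^{+}$ and $T(\mathcal{B}^{b})\subseteq\mathcal{B}^{b}$); you simply spell out the thickness and restriction details that the paper leaves implicit.
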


\begin{proof}
By Remark~\ref{rem310}, $\left(\mathbf{T}\text{-}\mathrm{Mod}_{\mathcal{B}^{\le 0}},(\mathcal{B}^{\ge 0})^{\mathbf{T}}\right)$ is a t-structure on $\mathbf{T}\text{-}\mathrm{Mod}_{\mathcal{B}}$ in the preferred equivalence class. It is straightforward to verify that
\[
(\mathbf{T}\text{-}\mathrm{Mod}_{\mathcal{B}})^{-} = \bigcup_{n>0}\mathbf{T}\text{-}\mathrm{Mod}_{\mathcal{B}^{\le n}} = \mathbf{T}\text{-}\mathrm{Mod}_{\bigcup_{n>0}\mathcal{B}^{\le n}} = \mathbf{T}\text{-}\mathrm{Mod}_{\mathcal{B}^{-}}.
\]
Similarly, one can deduce that $(\mathbf{T}\text{-}\mathrm{Mod}_{\mathcal{B}})^{+}=({\mathcal{B}^{+}})^{\mathbf{T}}$ and $(\mathbf{T}\text{-}\mathrm{Mod}_{\mathcal{B}})^{b}=({\mathcal{B}^{b}})^{\mathbf{T}}$.

If, moreover, $T$ is $t$-exact with respect to $(\mathcal{B}^{\le 0},\mathcal{B}^{\ge 0})$, then $T(\mathcal{B}^{+})\subseteq \mathcal{B}^{+}$ and $T(\mathcal{B}^{b})\subseteq \mathcal{B}^{b}$. Hence, $(\mathbf{T}\text{-}\mathrm{Mod}_{\mathcal{B}})^{+}$ and $(\mathbf{T}\text{-}\mathrm{Mod}_{\mathcal{B}})^{b}$ naturally restrict to separable extensions of $\mathcal{B}^{+}$ and $\mathcal{B}^{b}$, respectively.
\end{proof}

\begin{lem}\label{prop45}\rm
Suppose that $T$ is right $t$-exact with respect to $(\mathcal{B}^{\le 0},\mathcal{B}^{\ge 0})$. Then $(\mathcal{B}^{-}_{c})^{\mathbf{T}}\subseteq (\mathbf{T}\text{-}\mathrm{Mod}_{\mathcal{B}})^{-}_{c}$. Moreover, if $T$ preserves compactness, then $(\mathbf{T}\text{-}\mathrm{Mod}_{\mathcal{B}})^{-}_{c}=\mathbf{T}\text{-}\mathrm{Mod}_{\mathcal{B}^{-}_{c}}$ is a separable extension of $\mathcal{B}^{-}_{c}$.
\end{lem}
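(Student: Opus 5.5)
The argument reuses the approximation trick from the proof of Theorem~\ref{appro}, applied to the compact pieces of $\mathcal{B}^{-}_{c}$ rather than to $\overline{\langle C\rangle}$-approximations. Throughout we use the lifted $t$-structure $\left(\mathbf{T}\text{-}\mathrm{Mod}_{\mathcal{B}^{\le 0}},(\mathcal{B}^{\ge 0})^{\mathbf{T}}\right)$. By Remark~\ref{rem310} it lies in the preferred equivalence class, so we may compute $(\mathbf{T}\text{-}\mathrm{Mod}_{\mathcal{B}})^{-}_{c}$ with it.

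\emph{First inclusion.} Let $(B,h_B)$ with $B\in\mathcal{B}^{-}_{c}$, and fix $n>0$. We choose a triangle $X\to B\to Y\to\Sigma X$ in $\mathcal{B}$ with $X\in\mathcal{B}^{c}$ and $Y\in\mathcal{B}^{\le -n}$. Applying $F^{\mathbf{T}}$ and composing with the counit $\varepsilon^{\mathbf{T}}_{(B,h_B)}=h_B$, we form the same octahedron as in the proof of Theorem~\ref{appro}. This gives a triangle
\[
F^{\mathbf{T}}(X)\to(B,h_B)\to(Y',h_{Y'})\to\Sigma F^{\mathbf{T}}(X),
\]
together with a triangle $\Sigma^{-1}(B',h_{B'})\to F^{\mathbf{T}}(Y)\to(Y',h_{Y'})\to(B',h_{B'})$, where $(B',h_{B'})$ is the cone of $h_B$. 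Since $G^{\mathbf{T}}$ is separable, $h_B$ is a split epimorphism in $\mathbf{T}\text{-}\mathrm{Mod}_{\mathcal{B}}$. Hence $(B',h_{B'})\cong\Sigma\ker$, and it is a summand of $\Sigma F^{\mathbf{T}}(B)$.

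This summand statement needs the right degree bookkeeping. We have $F^{\mathbf{T}}(X)$ compact, because $F^{\mathbf{T}}$ has a coproduct-preserving right adjoint $G^{\mathbf{T}}$ (Lemma~\ref{coproducts}), so $F^{\mathbf{T}}$ preserves compactness. We also have $F^{\mathbf{T}}(Y)\in\mathbf{T}\text{-}\mathrm{Mod}_{\mathcal{B}^{\le -n}}$ by right $t$-exactness of $T$. The cone $(B',h_{B'})$, however, is only controlled by where $B$ lives, not by $n$. So I will not argue as in Theorem~\ref{appro} directly. Instead I use that $\mathcal{B}^{-}_{c}$ and $(\mathbf{T}\text{-}\mathrm{Mod}_{\mathcal{B}})^{-}_{c}$ are thick, which is stated from \cite{compactge}.

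The cleaner route is as follows. First, $F^{\mathbf{T}}$ maps $\mathcal{B}^{-}_{c}$ into $(\mathbf{T}\text{-}\mathrm{Mod}_{\mathcal{B}})^{-}_{c}$: apply $F^{\mathbf{T}}$ to the triangles $X\to B\to Y$, using that $F^{\mathbf{T}}$ preserves compacts and sends $\mathcal{B}^{\le -n}$ into $\mathbf{T}\text{-}\mathrm{Mod}_{\mathcal{B}^{\le -n}}$. Second, since $h_B\colon F^{\mathbf{T}}(B)\to(B,h_B)$ is split, $(B,h_B)$ is a direct summand of $F^{\mathbf{T}}(B)$. Thickness of $(\mathbf{T}\text{-}\mathrm{Mod}_{\mathcal{B}})^{-}_{c}$ then yields $(B,h_B)\in(\mathbf{T}\text{-}\mathrm{Mod}_{\mathcal{B}})^{-}_{c}$. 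This proves $(\mathcal{B}^{-}_{c})^{\mathbf{T}}\subseteq(\mathbf{T}\text{-}\mathrm{Mod}_{\mathcal{B}})^{-}_{c}$.

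\emph{Reverse inclusion when $T$ preserves compactness.} Symmetrically, $G^{\mathbf{T}}$ preserves compacts. Indeed, $(\mathbf{T}\text{-}\mathrm{Mod}_{\mathcal{B}})^{c}=\mathbf{T}\text{-}\mathrm{Mod}_{\mathcal{B}^{c}}$ by Lemma~\ref{compact}, or one can argue directly that $G^{\mathbf{T}}(A)$ is a summand of $G^{\mathbf{T}}F^{\mathbf{T}}G^{\mathbf{T}}(A)=TG^{\mathbf{T}}(A)$. The functor $G^{\mathbf{T}}$ is also $t$-exact for the lifted structure, by the definition of the lifted aisle. So given $(A,h_A)\in(\mathbf{T}\text{-}\mathrm{Mod}_{\mathcal{B}})^{-}_{c}$ and a triangle with compact first term and third term in $\mathbf{T}\text{-}\mathrm{Mod}_{\mathcal{B}^{\le -n}}$, applying $G^{\mathbf{T}}$ shows $A\in\mathcal{B}^{c}*\mathcal{B}^{\le -n}$ for every $n$. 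Hence $A\in\mathcal{B}^{-}_{c}$, and we get equality with $(\mathcal{B}^{-}_{c})^{\mathbf{T}}$.

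\emph{Separable extension.} Finally, $T=G^{\mathbf{T}}F^{\mathbf{T}}$ maps $\mathcal{B}^{-}_{c}$ into itself by the two preservation facts above. Since $\mathcal{B}^{-}_{c}$ is thick, Lemma~\ref{stab} identifies $(\mathcal{B}^{-}_{c})^{\mathbf{T}}$ with $\mathbf{T}\text{-}\mathrm{Mod}_{\mathcal{B}^{-}_{c}}$, a separable extension of $\mathcal{B}^{-}_{c}$.

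The step I expect to need the most care is the thickness of $(\mathbf{T}\text{-}\mathrm{Mod}_{\mathcal{B}})^{-}_{c}$ used in the summand argument. This requires $\mathbf{T}\text{-}\mathrm{Mod}_{\mathcal{B}}$ to be weakly approximable, which is given by Theorem~\ref{appro}, so that Neeman's thickness result applies. If that is unavailable, I would instead argue directly that $\mathcal{X}*\mathcal{Y}$ is closed under summands whenever $\mathcal{X}$ is compacts and $\mathcal{Y}$ is an aisle, in the style of \cite{compactge}.
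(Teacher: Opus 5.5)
Your main line is exactly the paper's argument. $F^{\mathbf{T}}$ carries $\mathcal{B}^{-}_{c}$ into $(\mathbf{T}\text{-}\mathrm{Mod}_{\mathcal{B}})^{-}_{c}$, and $(B,h_B)$ is a summand of $F^{\mathbf{T}}(B)$ via the split counit. Thickness finishes the inclusion; it is available because $\mathbf{T}\text{-}\mathrm{Mod}_{\mathcal{B}}$ is weakly approximable by Theorem~\ref{appro}. For the equality, you simply spell out, via Lemma~\ref{compact} and $t$-exactness of $G^{\mathbf{T}}$, what the paper calls ``easily follows from the definition.''

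One correction: your ``direct'' alternative for $G^{\mathbf{T}}$ preserving compacts is circular. Knowing that $G^{\mathbf{T}}(A)$ is a summand of $TG^{\mathbf{T}}(A)$ only helps once $G^{\mathbf{T}}(A)$ is already known to be compact. Argue instead that $G^{\mathbf{T}}$ is exact and sends the compact generator $F^{\mathbf{T}}(C)$ to the compact object $T(C)$, hence sends its thick closure $(\mathbf{T}\text{-}\mathrm{Mod}_{\mathcal{B}})^{c}$ into $\mathcal{B}^{c}$. Your fallback claim that $\mathcal{B}^{c}*\mathcal{B}^{\le -n}$ is closed under summands is unproved, but it is also unnecessary.
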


\begin{proof}
Let $(X,h_{X})\in (\mathcal{B}^{-}_{c})^{\mathbf{T}}$. For any integer $n>0$, there exists a distinguished triangle $X^{c}\longrightarrow X\longrightarrow Y\longrightarrow \Sigma X^{c}$ in $\mathcal{B}$, with $X^{c}\in \mathcal{B}^{c}$ and $Y\in \mathcal{B}^{\le -n}$. By \cite[Lemma~5.4.1]{localizationT}, $F^{\mathbf{T}}$ preserves compactness, which implies $F^{\mathbf{T}}(X^{c})\in (\mathbf{T}\text{-}\mathrm{Mod}_{\mathcal{B}})^{c}$. Since $T$ is right $t$-exact, we have $F^{\mathbf{T}}(Y)\in \mathbf{T}\text{-}\mathrm{Mod}_{\mathcal{B}^{\le -n}}$. This yields $F^{\mathbf{T}}(X)\in (\mathbf{T}\text{-}\mathrm{Mod}_{\mathcal{B}})^{-}_{c}$. Note that $(X,h_{X})$ is a direct summand of $F^{\mathbf{T}}(X)$ inside $(\mathbf{T}\text{-}\mathrm{Mod}_{\mathcal{B}})^{-}_{c}$. Because $(\mathbf{T}\text{-}\mathrm{Mod}_{\mathcal{B}})^{-}_{c}$ is thick, we conclude that $(X,h_{X})\in (\mathbf{T}\text{-}\mathrm{Mod}_{\mathcal{B}})^{-}_{c}$.

If $T$ preserves compactness, we have $T(\mathcal{B}^{-}_{c}) \subseteq \mathcal{B}^{-}_{c}$. By Lemma~\ref{compact}, $\mathbf{T}\text{-}\mathrm{Mod}_{\mathcal{B}^{c}} = (\mathbf{T}\text{-}\mathrm{Mod}_{\mathcal{B}})^{c}$. The equality $(\mathbf{T}\text{-}\mathrm{Mod}_{\mathcal{B}})^{-}_{c} = \mathbf{T}\text{-}\mathrm{Mod}_{\mathcal{B}^{-}_{c}}$ then easily follows from the definition of $(\mathbf{T}\text{-}\mathrm{Mod}_{\mathcal{B}})^{-}_{c}$.
\end{proof}

\begin{lem}\label{prop46}\rm
Suppose that $T$ is right $t$-exact with respect to $(\mathcal{B}^{\le 0},\mathcal{B}^{\ge 0})$. Then $(\mathcal{B}^{b}_{c})^{\mathbf{T}}\subseteq (\mathbf{T}\text{-}\mathrm{Mod}_{\mathcal{B}})^{b}_{c}$. Moreover, if $T$ is $t$-exact with respect to $(\mathcal{B}^{\le 0},\mathcal{B}^{\ge 0})$ and preserves compactness, then $(\mathbf{T}\text{-}\mathrm{Mod}_{\mathcal{B}})^{b}_{c}=\mathbf{T}\text{-}\mathrm{Mod}_{\mathcal{B}^{b}_{c}}$ is a separable extension of $\mathcal{B}^{b}_{c}$.
\end{lem}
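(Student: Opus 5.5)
The plan is to combine Lemma~\ref{rb} and Lemma~\ref{prop45}, using the definition $\mathcal{B}^{b}_{c}=\mathcal{B}^{-}_{c}\cap\mathcal{B}^{b}$ and the fact (Remark~\ref{rem310}) that the lifted $t$-structure $\left(\mathbf{T}\text{-}\mathrm{Mod}_{\mathcal{B}^{\le 0}},(\mathcal{B}^{\ge 0})^{\mathbf{T}}\right)$ lies in the preferred equivalence class of $\mathbf{T}\text{-}\mathrm{Mod}_{\mathcal{B}}$. This fact lets us compute $(\mathbf{T}\text{-}\mathrm{Mod}_{\mathcal{B}})^{b}$ and $(\mathbf{T}\text{-}\mathrm{Mod}_{\mathcal{B}})^{-}_{c}$ with respect to it. Note that, by Theorem~\ref{appro}, $\mathbf{T}\text{-}\mathrm{Mod}_{\mathcal{B}}$ is weakly approximable, so these intrinsic subcategories are defined there.

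\textbf{First assertion.} Since intersection commutes with the $(-)^{\mathbf{T}}$ operation, we have
\[
(\mathcal{B}^{b}_{c})^{\mathbf{T}}=(\mathcal{B}^{-}_{c})^{\mathbf{T}}\cap(\mathcal{B}^{b})^{\mathbf{T}}.
\]
By Lemma~\ref{prop45}, the first factor is contained in $(\mathbf{T}\text{-}\mathrm{Mod}_{\mathcal{B}})^{-}_{c}$. By Lemma~\ref{rb}, the second factor equals $(\mathbf{T}\text{-}\mathrm{Mod}_{\mathcal{B}})^{b}$. Intersecting the two gives $(\mathcal{B}^{b}_{c})^{\mathbf{T}}\subseteq(\mathbf{T}\text{-}\mathrm{Mod}_{\mathcal{B}})^{b}_{c}$, which only requires right $t$-exactness.

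\textbf{Second assertion.} Now assume $T$ is $t$-exact and preserves compactness. Lemma~\ref{prop45} then gives the equality $(\mathbf{T}\text{-}\mathrm{Mod}_{\mathcal{B}})^{-}_{c}=\mathbf{T}\text{-}\mathrm{Mod}_{\mathcal{B}^{-}_{c}}=(\mathcal{B}^{-}_{c})^{\mathbf{T}}$. Combined with Lemma~\ref{rb}, this yields
\[
(\mathbf{T}\text{-}\mathrm{Mod}_{\mathcal{B}})^{b}_{c}=(\mathcal{B}^{-}_{c})^{\mathbf{T}}\cap(\mathcal{B}^{b})^{\mathbf{T}}=(\mathcal{B}^{b}_{c})^{\mathbf{T}}.
\]
To identify this with $\mathbf{T}\text{-}\mathrm{Mod}_{\mathcal{B}^{b}_{c}}$ and conclude it is a separable extension, I will invoke the last part of Lemma~\ref{stab}. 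That needs $\mathcal{B}^{b}_{c}$ to be a triangulated (indeed thick) subcategory, which holds by \cite{compactge}, and it needs $T(\mathcal{B}^{b}_{c})\subseteq\mathcal{B}^{b}_{c}$.

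\textbf{Main step: stability of $\mathcal{B}^{b}_{c}$ under $T$.} Since $T$ preserves compactness and is right $t$-exact, the argument in Lemma~\ref{prop45} gives $T(\mathcal{B}^{-}_{c})\subseteq\mathcal{B}^{-}_{c}$: apply $T$ to a triangle $X^{c}\to X\to Y\to\Sigma X^{c}$ with $X^{c}$ compact and $Y\in\mathcal{B}^{\le -n}$. Since $T$ is $t$-exact, $T(\mathcal{B}^{b})\subseteq\mathcal{B}^{b}$, as noted in Lemma~\ref{rb}. Hence $T$ preserves the intersection $\mathcal{B}^{b}_{c}$. The separable monad then restricts to $\mathcal{B}^{b}_{c}$, and Lemma~\ref{stab} finishes the proof.

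I expect no serious obstacle. The only subtle point is making sure that all intrinsic subcategories of $\mathbf{T}\text{-}\mathrm{Mod}_{\mathcal{B}}$ are computed with respect to a $t$-structure in its preferred class, which is exactly what Remark~\ref{rem310} provides.
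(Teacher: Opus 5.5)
Your proposal is correct and follows essentially the same route as the paper. The first inclusion comes from intersecting Lemma~\ref{rb} (which handles $(\mathcal{B}^{b})^{\mathbf{T}}$) with Lemma~\ref{prop45} (which handles $(\mathcal{B}^{-}_{c})^{\mathbf{T}}$); the second assertion comes from the equality in Lemma~\ref{prop45}, together with $T(\mathcal{B}^{b}_{c})\subseteq\mathcal{B}^{b}_{c}$ and Lemma~\ref{stab}, and your explicit check that $T$ preserves $\mathcal{B}^{-}_{c}$ and $\mathcal{B}^{b}$ just spells out what the paper leaves implicit.
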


\begin{proof}
Let $(X,h_{X})\in (\mathcal{B}^{b}_{c})^{\mathbf{T}}$. Since $T$ is right $t$-exact with respect to $(\mathcal{B}^{\le 0},\mathcal{B}^{\ge 0})$, Lemma~\ref{rb} implies that $(X,h_X)\in (\mathbf{T}\text{-}\mathrm{Mod}_{\mathcal{B}})^{b}$. Thus, by Lemma~\ref{prop45}, $(X,h_X)\in (\mathbf{T}\text{-}\mathrm{Mod}_{\mathcal{B}})^{b}\cap (\mathbf{T}\text{-}\mathrm{Mod}_{\mathcal{B}})^{-}_{c}\subseteq (\mathbf{T}\text{-}\mathrm{Mod}_{\mathcal{B}})^{b}_{c}$.

If $T$ is $t$-exact with respect to $(\mathcal{B}^{\le 0},\mathcal{B}^{\ge 0})$ and preserves compactness, Lemmas~\ref{rb} and \ref{prop45} imply that $(\mathbf{T}\text{-}\mathrm{Mod}_{\mathcal{B}})^{-}_{c}=\mathbf{T}\text{-}\mathrm{Mod}_{\mathcal{B}^{-}_{c}}$ and $T(\mathcal{B}^{b}_{c}) \subseteq \mathcal{B}^{b}_{c}$. Hence, $(\mathbf{T}\text{-}\mathrm{Mod}_{\mathcal{B}})^{b}_{c}=\mathbf{T}\text{-}\mathrm{Mod}_{\mathcal{B}^{b}_{c}}$ naturally forms a separable extension of $\mathcal{B}^{b}_{c}$.
\end{proof}

Based on the preceding discussions, we summarize the results in the following theorem for brevity.

\begin{thm}\label{subcat}\rm
Let $\mathcal{B}$ be a weakly approximable triangulated category satisfying Hypothesis~\ref{ph}, and let $(\mathcal{B}^{\le 0},\mathcal{B}^{\ge 0})$ be a t-structure on $\mathcal{B}$ in the preferred equivalence class. Suppose that $T$ is $t$-exact with respect to $(\mathcal{B}^{\le 0},\mathcal{B}^{\ge 0})$ and preserves compactness. Then we have:
\begin{enumerate}
    \item[(1)] $(\mathbf{T}\text{-}\mathrm{Mod}_{\mathcal{B}})^{-}=\mathbf{T}\text{-}\mathrm{Mod}_{\mathcal{B}^{-}}$;
    \item[(2)] $(\mathbf{T}\text{-}\mathrm{Mod}_{\mathcal{B}})^{+}=\mathbf{T}\text{-}\mathrm{Mod}_{\mathcal{B}^{+}}$;
    \item[(3)] $(\mathbf{T}\text{-}\mathrm{Mod}_{\mathcal{B}})^{b}=\mathbf{T}\text{-}\mathrm{Mod}_{\mathcal{B}^{b}}$;
    \item[(4)] $(\mathbf{T}\text{-}\mathrm{Mod}_{\mathcal{B}})^{c}= \mathbf{T}\text{-}\mathrm{Mod}_{\mathcal{B}^{c}}$;
    \item[(5)] $(\mathbf{T}\text{-}\mathrm{Mod}_{\mathcal{B}})^{-}_{c}= \mathbf{T}\text{-}\mathrm{Mod}_{\mathcal{B}^{-}_{c}}$;
    \item[(6)] $(\mathbf{T}\text{-}\mathrm{Mod}_{\mathcal{B}})^{b}_{c}= \mathbf{T}\text{-}\mathrm{Mod}_{\mathcal{B}^{b}_{c}}$.
\end{enumerate}
In particular, they are all separable extensions of their corresponding subcategories.
\end{thm}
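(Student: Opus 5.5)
The theorem is a summary of Lemmas~\ref{rb}, \ref{prop45}, \ref{prop46} together with Lemma~\ref{compact}, so the plan is to assemble these items one at a time. Two standing facts will be used throughout. Since $T$ is $t$-exact, it is in particular right $t$-exact, so every hypothesis of those lemmas holds. By Remark~\ref{rem310}, the lifted pair $\left(\mathbf{T}\text{-}\mathrm{Mod}_{\mathcal{B}^{\le 0}},(\mathcal{B}^{\ge 0})^{\mathbf{T}}\right)$ lies in the preferred equivalence class of $\mathbf{T}\text{-}\mathrm{Mod}_{\mathcal{B}}$, so we may use it to compute all intrinsic subcategories of $\mathbf{T}\text{-}\mathrm{Mod}_{\mathcal{B}}$. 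This requires $\mathbf{T}\text{-}\mathrm{Mod}_{\mathcal{B}}$ to be weakly approximable, which Theorem~\ref{appro} gives, so that the definitions of $(-)^{-}_{c}$ and $(-)^{b}_{c}$ make sense.

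Items (1)--(3) come from Lemma~\ref{rb}. It gives (1) directly. For (2) and (3) it gives $(\mathbf{T}\text{-}\mathrm{Mod}_{\mathcal{B}})^{+}=(\mathcal{B}^{+})^{\mathbf{T}}$ and $(\mathbf{T}\text{-}\mathrm{Mod}_{\mathcal{B}})^{b}=(\mathcal{B}^{b})^{\mathbf{T}}$. To rewrite these as $\mathbf{T}\text{-}\mathrm{Mod}_{\mathcal{B}^{+}}$ and $\mathbf{T}\text{-}\mathrm{Mod}_{\mathcal{B}^{b}}$, I will show that $T(\mathcal{B}^{+})\subseteq\mathcal{B}^{+}$ and $T(\mathcal{B}^{b})\subseteq\mathcal{B}^{b}$. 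Left $t$-exactness gives $T(\mathcal{B}^{\ge -n})\subseteq \mathcal{B}^{\ge -n}$, and right $t$-exactness handles the bounded above part. Then Lemma~\ref{stab} identifies $\mathcal{S}^{\mathbf{T}}$ with $\mathbf{T}\text{-}\mathrm{Mod}_{\mathcal{S}}$, and shows it is a separable extension of $\mathcal{S}$, for each $\mathcal{S}\in\{\mathcal{B}^{+},\mathcal{B}^{b}\}$.

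Item (4) is Lemma~\ref{compact}, which applies because $T$ preserves compactness. Item (5) is the second part of Lemma~\ref{prop45}. Item (6) is the second part of Lemma~\ref{prop46}, whose hypotheses ($t$-exactness and preservation of compactness) are exactly ours.

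The main point to check is the closure property $T(\mathcal{S})\subseteq\mathcal{S}$ for each subcategory, since Lemma~\ref{stab} needs it to conclude that each $\mathbf{T}\text{-}\mathrm{Mod}_{\mathcal{S}}$ is a separable extension of $\mathcal{S}$.
\begin{itemize}
\item For $\mathcal{B}^{c}$, closure is exactly the assumption that $T$ preserves compactness.
\item For $\mathcal{B}^{-}_{c}$, take $X\in\mathcal{B}^{-}_{c}$ and a triangle $X^{c}\to X\to Y\to\Sigma X^{c}$ with $X^{c}$ compact and $Y\in\mathcal{B}^{\le -n}$. Applying the exact functor $T$ gives a triangle with $T(X^{c})$ compact and $T(Y)\in\mathcal{B}^{\le -n}$, so $T(X)\in\mathcal{B}^{-}_{c}$.
\item For $\mathcal{B}^{b}_{c}=\mathcal{B}^{-}_{c}\cap\mathcal{B}^{b}$, closure follows by intersecting the previous case with the case of $\mathcal{B}^{b}$.
\end{itemize}
Combining these closure properties with Lemma~\ref{stab} yields the final assertion that all six categories are separable extensions of their corresponding subcategories of $\mathcal{B}$.
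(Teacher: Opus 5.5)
Your proposal is correct and follows the paper's route: the paper states this theorem as a summary of Lemmas~\ref{rb}, \ref{prop45}, \ref{prop46} and \ref{compact}, using that $t$-exactness implies right $t$-exactness. You also fill in details the paper leaves implicit, namely the check that $T(\mathcal{B}^{-}_{c})\subseteq\mathcal{B}^{-}_{c}$ and $T(\mathcal{B}^{b}_{c})\subseteq\mathcal{B}^{b}_{c}$, and the observation (via Theorem~\ref{appro}) that $\mathbf{T}\text{-}\mathrm{Mod}_{\mathcal{B}}$ is weakly approximable, so its intrinsic subcategories are defined.
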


\begin{exa}\rm
Let $Q$ be a finite group acting strictly on a triangulated category $\mathcal{B}$, i.e., via a group homomorphism $g \mapsto U_g$ from $Q$ to the automorphism group of $\mathcal{B}$. Assume that $\lvert Q\rvert$ is invertible in $\mathcal B$. A \emph{$Q$-equivariant object} in $\mathcal{B}$ is a pair $(X, \{\alpha_{g} \mid g\in Q\})$, where $X$ is an object in $\mathcal{B}$ and $\{\alpha_{g} \mid g\in Q\}$ is a collection $\{\alpha_{g}:X \to U_{g}(X)\}$ of isomorphisms, called an \emph{equivariant structure} on $X$, subject to the relations $\alpha_{gh}=U_{g}(\alpha_{h})\circ \alpha_{g}$. A morphism $f:(X, \{\alpha_{g} \mid g\in Q\})\to (Y, \{\beta_{g} \mid g\in Q\})$ between two $Q$-equivariant objects is a morphism $f:X\to Y$ in $\mathcal{B}$ such that $\beta_{g}\circ f=U_{g}(f) \circ \alpha_{g}$ for all $g\in Q$. This gives rise to the category $\mathcal{B}^{Q}$ of $Q$-equivariant objects. The category $\mathcal{B}^{Q}$ is called the \emph{equivariantization} of $\mathcal{B}$ with respect to the action of $Q$. 

There are an obvious forgetful functor $G:\mathcal{B}^{Q}\to \mathcal{B}$, forgetting the $Q$-equivariant structure, and an induction functor $F:\mathcal{B} \to \mathcal{B}^{Q}$ such that
$F(X)=\bigoplus_{g\in Q}U_{g}(X)$ equipped with an obvious equivariant structure. By \cite[Lemma~2.14(1)]{equiva}, $F$ is left and right adjoint to $G$. This yields a Frobenius monad $\mathbf{T}$ defined by the adjoint pair $(F,G)$, where the endofunctor $T=GF=\bigoplus_{g\in Q}U_{g}$. Since $Q$ is finite, we have $T^{2}\cong T^{\oplus |Q|}$. Thus, $\mathbf{T}$ is a Frobenius monad of rank $|Q|$. By \cite[Lemmas 4.3 and 4.4]{Anote}, if $|Q|$ is invertible in $\mathcal{B}$, we may identify $\mathcal{B}^{Q}$ with $\mathbf{T}\text{-}\mathrm{Mod}_{\mathcal{B}}$, and $\mathbf{T}$ is stably separable. Now let $\mathcal{B}$ be a weakly approximable triangulated category. Observe that $T=\bigoplus_{g\in Q}U_{g}$ preserves coproducts. By Proposition~\ref{prop_fro}, $\mathcal{B}^{Q}$ is weakly approximable.

Moreover, because every $U_{g}$ is an automorphism and  $Q$ is finite, for any $X\in \mathcal{B}^{\le -1}$ and $Y\in \mathcal{B}^{\ge 0}$, we have a natural isomorphism $\mathrm{Hom}_{\mathcal{B}}(X, \bigoplus_{g\in Q}U_{g}(Y)) \cong \mathrm{Hom}_{\mathcal{B}}(\bigoplus_{g\in Q}U_{g}(X), Y)$, which implies that the functor $T$ is also left $t$-exact. Therefore, by Theorem~\ref{subcat}, we obtain the following:
\begin{enumerate}
    \item[(1)] $(\mathcal{B}^{Q})^{-}=(\mathcal{B}^{-})^{Q}$;
    \item[(2)] $(\mathcal{B}^{Q})^{+}=(\mathcal{B}^{+})^{Q}$;
    \item[(3)] $(\mathcal{B}^{Q})^{b}=(\mathcal{B}^{b})^{Q}$;
    \item[(4)] $(\mathcal{B}^{Q})^{c}= (\mathcal{B}^{c})^{Q}$;
    \item[(5)] $(\mathcal{B}^{Q})^{-}_{c}= (\mathcal{B}^{-}_{c})^{Q}$;
    \item[(6)] $(\mathcal{B}^{Q})^{b}_{c}= (\mathcal{B}^{b}_{c})^{Q}$.
\end{enumerate}
\end{exa}

\begin{exa}\rm
Let $\Lambda$ be an associative ring with identity and $Q$ a finite group. Given a map $\sigma: Q\to \mathrm{Aut}(\Lambda)$ and a map $\alpha: Q\times  Q\to U(\Lambda)$ satisfying the usual compatibility and normalization conditions, where $\mathrm{Aut}(\Lambda)$ is the group of ring automorphisms of $\Lambda$ and $U(\Lambda)$ is the set of invertible elements in $\Lambda$, we can define another associative ring $\Lambda^{\sigma}_{\alpha}Q$, called the \emph{crossed product} of $\Lambda$ with $Q$. When $\alpha$ or $\sigma$ is trivial, we recover the skew group ring $\Lambda^{\sigma}Q$ or the twisted group ring $\Lambda_{\alpha}Q$, respectively. Let $H\le Q$ be a subgroup. We can define the crossed product $\Lambda^{\sigma}_{\alpha}H$, which is naturally a subring of $\Lambda^{\sigma}_{\alpha}Q$. It can be easily deduced that $\Lambda^{\sigma}_{\alpha}Q$ is a finitely generated projective left $\Lambda^{\sigma}_{\alpha}H$-module, and the induction and coinduction functors are naturally isomorphic. Assume henceforth that $|Q:H|$ is invertible in $\Lambda$. Then $\Lambda^{\sigma}_{\alpha}Q$ is a separable extension of $\Lambda^{\sigma}_{\alpha}H$. For further details, see \cite{crosse}.

The induction functor $F=\Lambda^{\sigma}_{\alpha}Q \otimes_{\Lambda^{\sigma}_{\alpha}H}-$ and the restriction functor $G$ are both exact and preserve projectives. Let $\mathbf{T}$ be the monad defined by $(F,G)$. We then obtain a stably separable monad $\mathbf{D}(\mathbf{T})$ on $\mathbf{D}(\Lambda^{\sigma}_{\alpha}H)$ induced by $\mathbf{T}$.
 We denote the induced monad $\mathbf D(\mathbf T)$ on $\mathbf D(\Lambda_{\alpha}^{\sigma}H)$ simply by $\mathbf T$.

 Obviously, $\mathbf{T}$ preserves both coproducts and compactness. According to Theorem~\ref{subcat}, we obtain the following table, where every row represents a separable extension of triangulated categories:

\begin{center}
    \renewcommand{\arraystretch}{1.5}
    \setlength{\tabcolsep}{12pt}
\begin{tabular}{|*{3}{c|}} 
\hline 
$\mathcal{B}$ & $\mathbf{T}\text{-}\mathrm{Mod}_{\mathbf{D}(\Lambda^{\sigma}_{\alpha}H)}\cong \mathbf{D}(\Lambda^{\sigma}_{\alpha}Q)$ & $\mathbf{D}(\Lambda^{\sigma}_{\alpha}H)$ \\ \hline 
$\mathcal{B}^{-}$ & $\mathbf{D}^{-}(\Lambda^{\sigma}_{\alpha}Q)$  & $\mathbf{D}^{-}(\Lambda^{\sigma}_{\alpha}H)$ \\
$\mathcal{B}^{+}$ & $\mathbf{D}^{+}(\Lambda^{\sigma}_{\alpha}Q)$ & $\mathbf{D}^{+}(\Lambda^{\sigma}_{\alpha}H)$ \\
$\mathcal{B}^{b}$  & $\mathbf{D}^{b}(\Lambda^{\sigma}_{\alpha}Q)$ & $\mathbf{D}^{b}(\Lambda^{\sigma}_{\alpha}H)$ \\
$\mathcal{B}^{c}$  & $\mathbf{K}^{b}(\Lambda^{\sigma}_{\alpha}Q\text{-}\mathrm{proj})$ & $\mathbf{K}^{b}(\Lambda^{\sigma}_{\alpha}H\text{-}\mathrm{proj})$  \\
$\mathcal{B}^{-}_{c}$  & $\mathbf{K}^{-}(\Lambda^{\sigma}_{\alpha}Q\text{-}\mathrm{proj})$ & $\mathbf{K}^{-}(\Lambda^{\sigma}_{\alpha}H\text{-}\mathrm{proj})$ \\
$\mathcal{B}^{b}_{c}$  & $\mathbf{K}^{-,b}(\Lambda^{\sigma}_{\alpha}Q\text{-}\mathrm{proj})$ & $\mathbf{K}^{-,b}(\Lambda^{\sigma}_{\alpha}H\text{-}\mathrm{proj})$ \\ \hline
\end{tabular}
\end{center}
\end{exa}

\begin{exa}\rm
Let $X$ be a quasi-compact and quasi-separated scheme, and let $\mathbf{D}_{qc}(X)$ be the derived category of complexes of $\mathcal{O}_{X}$-modules with quasi-coherent cohomology. Let $f:X\to Y$ be a finite and \'etale morphism of quasi-compact and quasi-separated schemes. By \cite[Theorem~3.5 and Corollary~4.8]{combal}, the monad $T=f_{*}f^{*}$ on $\mathbf{D}_{qc}(Y)$ is separable, $t$-exact with respect to the standard $t$-structure, and preserves coproducts as well as perfect complexes. Thus, there is a unique equivalence of triangulated categories $\mathbf{D}_{qc}(X)\cong \mathbf{T}\text{-}\mathrm{Mod}_{\mathbf{D}_{qc}(Y)}$. According to Theorem~\ref{subcat}, we obtain the following table:

\begin{center}
    \renewcommand{\arraystretch}{1.5}
    \setlength{\tabcolsep}{12pt}
\begin{tabular}{|*{3}{c|}} 
\hline 
$\mathcal{B}$ & $\mathbf{D}_{qc}(X)$ & $\mathbf{D}_{qc}(Y)$ \\ \hline 
$\mathcal{B}^{-}$ & $\mathbf{D}^{-}_{qc}(X)$  & $\mathbf{D}^{-}_{qc}(Y)$ \\
$\mathcal{B}^{+}$ & $\mathbf{D}^{+}_{qc}(X)$ & $\mathbf{D}^{+}_{qc}(Y)$ \\
$\mathcal{B}^{b}$  & $\mathbf{D}^{b}_{qc}(X)$ & $\mathbf{D}^{b}_{qc}(Y)$ \\
$\mathcal{B}^{c}$  & $\mathbf{D}^{\mathrm{perf}}(X)$ & $\mathbf{D}^{\mathrm{perf}}(Y)$  \\
$\mathcal{B}^{-}_{c}$  & $\mathbf{D}^{p}_{qc}(X)$ & $\mathbf{D}^{p}_{qc}(Y)$ \\
$\mathcal{B}^{b}_{c}$  & $\mathbf{D}^{p,b}_{qc}(X)$ & $\mathbf{D}^{p,b}_{qc}(Y)$ \\ \hline
\end{tabular}
\end{center}
Here, $\mathbf{D}^{p}_{qc}(X)$ denotes the subcategory of pseudo-coherent complexes, $\mathbf{D}^{p,b}_{qc}(X)$ denotes the subcategory of pseudo-coherent complexes with bounded cohomology, and every row represents a separable extension of triangulated categories.
\end{exa}

\bigskip {\bf Acknowledgements}
\bigskip

This research was partially supported by the Natural Science Basic Research Program of Shaanxi Province (Grant No. 2024JC-YBMS-032).

\end{document}